\newtheorem{Theorem}{Theorem}[section]
\newtheorem{Definition}[Theorem]{Definition}
\newtheorem{Proposition}[Theorem]{Proposition}
\newtheorem{Lemma}[Theorem]{Lemma}
\newtheorem{Corollary}[Theorem]{Corollary}
\newtheorem{Remark}[Theorem]{Remark}
\newcommand{\N}{\mathbb{N}}
\newcommand{\Z}{\mathbb{Z}}
\newcommand{\R}{\mathbb{R}}
\newcommand{\C}{\mathbb{C}}
\newcommand{\norm}[1]{\left\lVert #1 \right\rVert}	
\newcommand{\normabs}[1]{\left| #1 \right|}	
\newcommand{\abs}[1]{\left| #1 \right|}
\newcommand\restr[2]{{
  \left.\kern-\nulldelimiterspace 
  #1 
  \vphantom{\big|} 
  \right|_{#2} 
  }}
\newcommand{\mylabel}[2]{#2\def\@currentlabel{#2}\label{#1}}
\begin{document}

\title{Continuous-time locally stationary time series models\\
}
\author{Annemarie Bitter, Robert Stelzer and Bennet Ströh\footnote{Ulm University, Institute of Mathematical Finance, Helmholtzstra\ss e 18, 89069 Ulm, Germany. Emails: bitter.annemarie@gmail.com, robert.stelzer@uni-ulm.de, bennet.stroeh@uni-ulm.de. }}

\maketitle

\textwidth=160mm \textheight=225mm \parindent=8mm \frenchspacing
\vspace{3mm}

\begin{abstract}
We adapt the classical definition of locally stationary processes in discrete-time (see e.g. \cite{D2012}) to the continuous-time setting and obtain equivalent representations in the time and frequency domain. From this, a unique time-varying spectral density is derived using the Wigner-Ville spectrum. As an example, we investigate time-varying L\'evy-driven state space processes, including the class of time-varying L\'evy-driven CARMA processes. First, the connection between these two classes of processes is examined. Considering a sequence of time-varying L\'evy-driven state space processes, we then give sufficient conditions on the coefficient functions that ensure local stationarity with respect to the given definition.
\end{abstract}

{\it MSC 2020: primary 60G07, 60G51; secondary 62M15}  
\\
\\
{\it Keywords: non-stationary processes, locally stationary, L\'evy-driven state space models, CARMA processes, spectral density}

\section{Introduction}
\label{sec1}
To model non-stationary time series that behave locally in a stationary manner, Dahlhaus and others developed, starting with the seminal paper \cite{D1996}, a comprehensive theory and powerful estimation procedures, using a parameterized sequence of processes for the definition of local stationarity (see e.g. \cite{D1997, D2000, DP2009}, or \cite{D2012} for an overview). Noticeable examples include, for instance, ARMA processes with continuous coefficient functions (see \cite{D1996}).
More recently, also non-parametric approaches that allow for linear and non-linear locally stationary models were introduced and investigated in \cite{BDW2020, DRW2019, DSR2006, V2012, VD2015}.\\
Despite this success, the above approaches have just been carried out for models defined on $\Z$, i.e. in a discrete-time framework. Surprisingly, there is so far no theory for locally stationary models defined on $\R$, i.e. in a continuous-time framework, available.\\
In this paper, we tackle this issue and define local stationarity for continuous-time models following the original definition from Dahlhaus \cite{D1996}. We establish such a definition in the frequency and time domain and, as we consistently use $L^2$-integration theory (see e.g. \cite{A2009} for an introduction), we readily obtain that both definitions are equivalent. Based on the definition in the frequency domain, we define a time-varying spectral density and show that it can be uniquely determined by a sequence of locally stationary processes, using the Wigner-Ville spectrum (see also \cite{D1996}). This uniqueness is a powerful property, as it is known to pave the way for a likelihood approximation (comparable to the Whittle likelihood for stationary processes), leading to powerful estimation methods (see \cite{D2000}).\\
As an example, we consider time-varying L\'evy-driven state space processes, which include the continuous-time analog of time-varying ARMA (time-varying CARMA) processes. L\'evy-driven CARMA processes are known to provide a flexible yet analytically tractable class of processes that have been applied to model a variety of phenomena from different areas \cite{BKMV2014, LM2004, MS2007}.\\
In the time-invariant setting, it is known from \cite{SS2012} that the class of CARMA processes is equivalent to the class of L\'evy-driven state space processes. While it is easy to see that also every time-varying CARMA process is a time-varying L\'evy-driven state space process, we show that the inverse inclusion fails to hold, at least for non-continuous coefficient functions. This motivates to look at the class of time-varying L\'evy-driven state space models.\\
More precisely, the paper is structured as follows. In Section \ref{sec2}, we first review the definition of local stationarity in the discrete-time framework. Then in Section \ref{sec2-2}, we summarize basic facts about L\'evy processes and orthogonal random measures, including integration with respect to them.\\
The novel definition of local stationarity for continuous-time models both in the frequency and time domain is given in Section \ref{sec3}. Moreover, we investigate asymptotic distributional properties of such models and show that the autocovariance function evaluated at distinct points tends to zero.\\ 
In Section \ref{sec4}, we investigate time-varying state space processes in the context of local stationarity. We start with a simple example in Section \ref{sec4-1}, where we consider a sequence of time-varying CAR(1) processes and give sufficient conditions on the coefficient function such that the sequence is locally stationary according to the given definition. Section \ref{sec4-2} and \ref{sec4-3} are dedicated to general time-varying state space processes. First, in Section \ref{sec4-2}, the connection between the class of time-varying CARMA processes and time-varying state space processes is examined. Then, we give sufficient conditions for a sequence of time-varying state space processes to be locally stationary.\\
Finally, in Section \ref{sec5} we investigate the time-varying spectral density and the Wigner-Ville spectrum of locally stationary processes.

\section{Preliminaries} 
\label{sec2}
Throughout this paper, we denote the set of positive integers by $\N$, non-negative real numbers by $\R^+_0$, the set of $m\times n$ matrices over a ring $R$ by $M_{m\times n}(R)$ and $\mathbf{1}_n$ stands for the $n\times n$ identity matrix. Given a complex number $z$, we denote the complex conjugate of $z$ by $\overline{z}$. For square matrices $A,B\in M_{n\times n}(R)$, $[A,B]=AB-BA$ denotes the commutator of $A$ and $B$, $Rank(A)$ the rank of $A$ and $\sigma(B)$ the spectrum of $B$. We shortly write the transpose of a matrix $A \in M_{m\times n}(\R)$ as $A'$ and the adjoint of a matrix $B \in M_{m\times n}(\C)$ as $B^*$. Norms of matrices and vectors are denoted by $\norm{\cdot}$. If the norm is not further specified, we take the Euclidean norm or its induced operator norm, respectively. For a complex number $z\in\C$, the real part of $z$ is denoted by $\mathfrak{Re}(z)$.
The Borel $\sigma$-algebras are denoted by $\mathcal{B}(\cdot)$ and $\lambda$ stands for the Lebesgue measure, at least in the context of measures. In the following, we will assume all stochastic processes and random variables to be defined on a common complete probability space $(\Omega,\mathcal{F},P)$ equipped with an appropriate filtration if necessary. 
We simply write $L^p$ to denote the space $L^p(\Omega,\mathcal{F},P)$ and $L^p(X)$ to denote the space $L^p(X,\mathcal{B}(X),\lambda)$ for some set $X\subset\R$ with corresponding norms $\norm{\cdot}_{L^p}$. The ring of continuous functions in $t$ from $\R$ to $\R$ is denoted by $\mathcal{R}[t]$.

\subsection{Locally stationary time series in discrete time} 
\label{sec2-1}

We follow the concept of local stationarity as established in \cite{D2012} for discrete-time locally stationary time series models. There, the authors considered a parametric representation of a sequence of non-stationary time-varying processes either in the time or frequency domain, which has to satisfy certain regularity conditions.\\
In the following we briefly review the mathematical details of the aforementioned concepts as well as the most important results.
To this end, we define the total variation of a function $g$ on $[0,1]$, denoted by $V(g)$, as
\begin{align*}
V(g):=\sup\left\{ \sum_{k=1}^m |g(x_k)-g(x_{k-1})|, 0\leq x_0 < \ldots < x_m \leq 1, m\in\N \right\}
\end{align*}
and for $\kappa>0$ we define
\begin{align*}
\ell_\kappa(j):=\begin{cases} 1,& |j|\leq1, \\
|j|\log^{1+\kappa}|j|, 	& |j|>1
\end{cases}
\end{align*}
for all $j\in\Z$. For further details on the following two definitions we refer to \cite{D2012}.

\begin{Definition}
\label{def_locstat_discrete1}
Let $\{X_{t,T}, t=1,\ldots,T\}_{T\in\N}$ be a sequence of stochastic processes. Then, $X_{t,T}$ is called locally stationary in the time domain, if there exists a representation 
\begin{align*}
X_{t,T}=\sum_{j=-\infty}^{\infty} a_{t,T,j}\varepsilon_{t-j}, \qquad T\in\N, \ t=1,...,T
\end{align*}
where
\begin{enumerate}[label={(\alph*)}]
\item $\{\varepsilon_t,t\in\Z\}$ is an i.i.d. (independent identically distributed) sequence with $E[\varepsilon_t]=0$ and $Var(\varepsilon_t)=1$,
\item for all $j\in\Z$ it holds
\begin{align*}
\sup_{\substack{t=1,...,T\\T\in\N}}|a_{t,T,j}| \leq \frac{K}{\ell_\kappa(j)},
\end{align*}
where $\kappa,K>0$ are constants and 
\item there exist functions $a_j(\cdot):(0,1]\rightarrow\R$, $j\in\Z$, satisfying
\begin{align}
\sup_{u\in(0,1]}|a_j(u)|\leq \frac{K}{\ell_\kappa(j)}, \qquad
\sup_{j\in\Z}\sum_{t=1}^T|a_{t,T,j}-a_j(\tfrac{t}{T})| \leq K \label{def_locstat_discrete1_a_j}\text{ and}\qquad
V(a_j(\cdot)) \leq \frac{K}{\ell_\kappa(j)}
\end{align}
for some constant $K$.
\end{enumerate} 
\end{Definition}

\begin{Definition}
\label{def_locstat_discrete2}
Let $\{X_{t,T}: t=1,\ldots,T\}_{T\in\N}$ be a sequence of stochastic processes. Then, $X_{t,T}$ is called locally stationary in the frequency domain with transfer functions $A_{t,T}^0:[-\pi,\pi]\rightarrow\C$, $T\in\N$, $t=1,\ldots,T$, if it has the representation 
\begin{align*}
X_{t,T}=\int_{-\pi}^{\pi} e^{i\lambda t} A_{t,T}^0(\lambda) \xi(d\lambda) \qquad \text{for all } T\in\N,\ t=1,\ldots,T,
\end{align*}
(with the integrals existing in $L^2$) where
\begin{enumerate}[label={(\alph*)}]
\item $\xi(\lambda)$ is a stochastic process on $[-\pi,\pi]$ with mean zero and orthogonal increments, 
					
\item there exists a constant $K$ and a function $A:[0,1]\times[-\pi,\pi]\rightarrow\C$, which is continuous in the first component satisfying $\overline{A(u,\lambda)}=A(u,-\lambda)$ and
\begin{align}\label{equ_locstat_discrete2_A0_A}
\sup_{\substack{t=1,...,T, \\ \lambda\in[-\pi,\pi]}}\big|A_{t,T}^0(\lambda)-A(\tfrac{t}{T},\lambda)\big| \leq \frac{K}{T}, \qquad T\in\N.
\end{align}
\end{enumerate}
\end{Definition}

\begin{Remark}
\begin{enumerate}[label={(\alph*)}]
\item Due to the smoothness conditions on the coefficient functions $a_j(u)$ and the transfer function $A(u,\lambda)$, the sequence $X_{t,T}$ shows a locally stationary behavior (see e.g. \cite[Definition 2.1]{D1997}). 
\item For a comprehensive introduction to orthogonal increment processes, orthogonal random measures and the related $L^2$-integration theory we refer to \cite{BD1996}.
\item We note that the given definitions of local stationarity in the time and frequency domain are not equivalent.\\ 
However, using the spectral representation of the noise $\varepsilon_t = \int_{(-\pi,\pi]} \tfrac{1}{\sqrt{2\pi}} e^{i\lambda t} \xi(d\lambda)$ (see \cite{BD1996}), the Fourier transform allows for the following connections (see \cite[Remark 2.2]{D2000}) between the two concepts. It holds
\begin{align*}
A_{t,T}^0(\lambda) 	&= \frac{1}{\sqrt{2\pi}} \sum_{j=-\infty}^{\infty} a_{t,T,j} e^{-i\lambda j}, \qquad & \qquad
A(u,\lambda) 		&= \frac{1}{\sqrt{2\pi}} \sum_{j=-\infty}^{\infty} a_j(u) e^{-i\lambda j}, \\
a_{t,T,j}			&= \frac{1}{\sqrt{2\pi}} \int_{-\pi}^{\pi} A_{t,T}^0(\lambda) e^{i\lambda j} d\lambda\text{ and} \qquad & \qquad
a_j(u)			&= \frac{1}{\sqrt{2\pi}} \int_{-\pi}^{\pi} A(u,\lambda) e^{i\lambda j} d\lambda,
\end{align*}
since $A_{t,T}^0(\cdot)\in L^2([-\pi,\pi],\C)$ and $a_{t,T,j}\in\ell^2$. \\
Necessary conditions for Definition \ref{def_locstat_discrete1} and \ref{def_locstat_discrete2} to be equivalent can be found in \cite[Remark 2.2]{DP2009}. In particular, this includes additional smoothness assumptions on $A(u,\lambda)$ and a stronger version of the second condition in (\ref{def_locstat_discrete1_a_j}).
%
\end{enumerate}
\end{Remark}

The following two propositions give further insight into Definition \ref{def_locstat_discrete2} and the notion of local stationarity.

\begin{Proposition}\label{proposition:L2convergencediscretetime}
Let $X_{t,T}$ be a locally stationary process in the frequency domain and $\{T_n\}_{n\in\N}\subset\N$ an increasing sequence. If $sT_n\in\{1,...,T_n\}$ for some fixed $s\in[0,1]$ and all $n>n_0$, $n_0\in\N$, then it holds
\begin{align*}
A_{sT_n,T_n}^0(\cdot) \xrightarrow[n\rightarrow\infty]{L^2} A(s,\cdot).
\end{align*}
\label{prop_1_1_L2norm}
\end{Proposition}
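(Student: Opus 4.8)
The plan is to exploit the fact that, at the chosen index, the rescaled time coincides exactly with $s$, so that the uniform bound in Definition~\ref{def_locstat_discrete2}(b) directly controls the difference $A_{sT_n,T_n}^0-A(s,\cdot)$ in the supremum norm, after which passing from supremum control to $L^2$-control is immediate because the frequency domain $[-\pi,\pi]$ has finite measure. First I would observe that with $t=sT_n$ and $T=T_n$ one has $\tfrac{t}{T}=\tfrac{sT_n}{T_n}=s$, so that $A(\tfrac{t}{T},\cdot)=A(s,\cdot)$. Consequently, no separate continuity argument for $A$ in its first variable is needed here; the point is simply that the evaluation index has been chosen so that the rescaled time is exactly $s$ for every admissible $n>n_0$.

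Applying the estimate (\ref{equ_locstat_discrete2_A0_A}) with this specific $t$ and $T=T_n$ then yields the uniform bound
\begin{align*}
\sup_{\lambda\in[-\pi,\pi]}\big|A_{sT_n,T_n}^0(\lambda)-A(s,\lambda)\big|\leq\frac{K}{T_n}
\end{align*}
for all $n>n_0$. Since the difference lives on the bounded interval $[-\pi,\pi]$ of Lebesgue measure $2\pi$, its $L^2$-norm is dominated by its supremum norm, so that
\begin{align*}
\Lnorm{A_{sT_n,T_n}^0(\cdot)-A(s,\cdot)}^2=\int_{-\pi}^{\pi}\big|A_{sT_n,T_n}^0(\lambda)-A(s,\lambda)\big|^2\,d\lambda\leq 2\pi\Big(\frac{K}{T_n}\Big)^2.
\end{align*}
Because $\{T_n\}$ is an increasing sequence in $\N$ we have $T_n\to\infty$, whence $\tfrac{\sqrt{2\pi}\,K}{T_n}\to0$ as $n\to\infty$, giving the claimed $L^2$-convergence.

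As for the main difficulty, there is essentially none beyond the bookkeeping above: the only substantive step is the upgrade from the uniform bound to genuine $L^2$-convergence, and this holds precisely because $[-\pi,\pi]$ has finite measure, so the supremum estimate is square-integrable with an explicit constant. Had the definition been formulated on an unbounded frequency range, this last step would instead require an additional integrability or decay hypothesis on $A_{t,T}^0-A$; on the compact frequency domain it is automatic. I would therefore expect the proof to be short, with the only point worth emphasising being the exact identity $\tfrac{sT_n}{T_n}=s$ that removes any need to invoke continuity of $A$.
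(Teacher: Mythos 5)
Your proof is correct and is exactly the argument the paper intends: the paper's proof simply states that the claim ``follows directly from (\ref{equ_locstat_discrete2_A0_A})'', and your write-up fills in precisely those details (the identity $\tfrac{sT_n}{T_n}=s$, the uniform bound $K/T_n$, and the passage to $L^2$ via the finite measure of $[-\pi,\pi]$). No discrepancies to report.
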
 
\begin{proof}
Follows directly from (\ref{equ_locstat_discrete2_A0_A}).
\end{proof}

\noindent For instance, the choice $T_n=2^n$ and $s=k/2^{n_0}$ for some $n_0\in\N$ and $k\in\{1,...,T_{n_0}\}$ suits the conditions of Proposition \ref{proposition:L2convergencediscretetime}.

\begin{Proposition} 
Let $X_{t,T}$ be a locally stationary process in the frequency domain with associated orthogonal increment process $\{\xi(\lambda),\lambda\in[-\pi,\pi]\}$ corresponding to an i.i.d. noise (i.e. $\varepsilon_t = \int_{(-\pi,\pi]} \tfrac{1}{\sqrt{2\pi}} e^{i\lambda t} \xi(d\lambda)$ defines an i.i.d. noise) and $\{T_n\}_{n\in\N}\subset\N$ an increasing sequence. If $sT_n\in\{1,...,T_n\}$ for some fixed $s\in[0,1]$ and all $n>n_0$, $n_0\in\N$, then it holds
\begin{align*}
X_{sT_n,T_n}\overset{d}{\underset{n\rightarrow\infty}{\longrightarrow}}\int_{-\pi}^{\pi} A(s,\lambda) \xi(d\lambda).
\end{align*}
\label{prop_1_1_ConvergenceInDistribution}
\end{Proposition}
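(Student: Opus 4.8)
The plan is to isolate the deterministic oscillating phase $e^{i\lambda sT_n}$, replace the transfer function $A_{sT_n,T_n}^0$ by its limit $A(s,\cdot)$ at the cost of an $L^2(\Omega)$-error, and then exploit the stationarity of the i.i.d.-generated noise to discard the phase without changing the distribution. Concretely, I would write
\[
X_{sT_n,T_n}=\int_{-\pi}^{\pi}e^{i\lambda sT_n}A_{sT_n,T_n}^0(\lambda)\,\xi(d\lambda),
\]
and set $Y_n:=\int_{-\pi}^{\pi}e^{i\lambda sT_n}A(s,\lambda)\,\xi(d\lambda)$ together with $Z:=\int_{-\pi}^{\pi}A(s,\lambda)\,\xi(d\lambda)$. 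I will establish (i) that $X_{sT_n,T_n}-Y_n\to0$ in $L^2(\Omega)$ and (ii) that $Y_n\overset{d}{=}Z$ for every $n>n_0$; combining the two via Slutsky's theorem then yields the assertion.

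For (i) I would use the $L^2$-isometry of the integral with respect to the orthogonal increment process. Since the associated noise is unit-variance white noise, the spectral measure of $\xi$ is Lebesgue measure on $[-\pi,\pi]$, so $\E\abs{\int g\,d\xi}^2=\int_{-\pi}^{\pi}\abs{g(\lambda)}^2\,d\lambda$ for $g\in L^2([-\pi,\pi])$. Applying this to the integrand $e^{i\lambda sT_n}\bigl(A_{sT_n,T_n}^0(\lambda)-A(s,\lambda)\bigr)$ and using $\abs{e^{i\lambda sT_n}}=1$ gives
\[
\E\bigl|X_{sT_n,T_n}-Y_n\bigr|^2=\int_{-\pi}^{\pi}\bigl|A_{sT_n,T_n}^0(\lambda)-A(s,\lambda)\bigr|^2\,d\lambda=\Lnorm{A_{sT_n,T_n}^0-A(s,\cdot)}^2 .
\]
By Proposition \ref{proposition:L2convergencediscretetime} the right-hand side tends to $0$, so $X_{sT_n,T_n}-Y_n\to0$ in $L^2(\Omega)$, and in particular in probability.

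Step (ii) is the one that genuinely uses the i.i.d. hypothesis, and I expect it to be the main point requiring care. The idea is that an \emph{integer} shift of the phase is invisible at the level of distributions for white noise. Using the Fourier connection from the Remark, $A(s,\lambda)=\tfrac{1}{\sqrt{2\pi}}\sum_{j}a_j(s)e^{-i\lambda j}$ with $\sum_j\abs{a_j(s)}<\infty$ by the decay bound $\sup_u\abs{a_j(u)}\le K/\ell_\kappa(j)$, together with the spectral representation $\varepsilon_t=\int\tfrac{1}{\sqrt{2\pi}}e^{i\lambda t}\xi(d\lambda)$, give the $L^2(\Omega)$-convergent moving-average representations $Y_n=\sum_{j}a_j(s)\,\varepsilon_{sT_n-j}$ and $Z=\sum_{j}a_j(s)\,\varepsilon_{-j}$, both real since $a_j(s)\in\R$. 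Here one must invoke the hypothesis $sT_n\in\{1,\dots,T_n\}\subset\Z$, so that $sT_n$ is an integer shift. As $\{\varepsilon_t\}$ is i.i.d., the families $\{\varepsilon_{sT_n-j}\}_{j\in\Z}$ and $\{\varepsilon_{-j}\}_{j\in\Z}$ share the same finite-dimensional distributions, whence the two identically-filtered series coincide in law, i.e. $Y_n\overset{d}{=}Z$.

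Finally, since $Y_n\overset{d}{=}Z$ for all $n>n_0$ we trivially have $Y_n\overset{d}{\to}Z$, while $X_{sT_n,T_n}-Y_n\to0$ in probability by (i). Slutsky's theorem applied to $X_{sT_n,T_n}=(X_{sT_n,T_n}-Y_n)+Y_n$ then yields $X_{sT_n,T_n}\overset{d}{\underset{n\to\infty}{\longrightarrow}}Z=\int_{-\pi}^{\pi}A(s,\lambda)\,\xi(d\lambda)$, as claimed.
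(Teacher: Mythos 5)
Your proof is correct and rests on the same two ingredients as the paper's, but assembles them in the opposite order. The paper first uses strict stationarity of the i.i.d.-driven spectral integral to drop the phase from the \emph{pre-limit} object, giving $X_{sT_n,T_n}\overset{d}{=}\int_{-\pi}^{\pi}A_{sT_n,T_n}^0(\lambda)\,\xi(d\lambda)$, and then lets Proposition \ref{proposition:L2convergencediscretetime} together with $L^2$-continuity of the stochastic integral carry this to $\int_{-\pi}^{\pi}A(s,\lambda)\,\xi(d\lambda)$ in distribution. You instead replace the transfer function first (keeping the phase), remove the phase only from the \emph{limit} object $Y_n$, and glue the two steps with Slutsky; this is equally valid and has the merit that your step (ii) actually proves the strict-stationarity claim that the paper merely asserts, via the moving-average representation and shift-invariance of the finite-dimensional distributions of an i.i.d. sequence. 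One blemish: you justify $\sum_j|a_j(s)|<\infty$ by the bound $\sup_u|a_j(u)|\le K/\ell_\kappa(j)$, which belongs to Definition \ref{def_locstat_discrete1} (time domain) and is not among the hypotheses here — the proposition only assumes local stationarity in the frequency domain, under which you merely get $(a_j(s))_j\in\ell^2$. This does not break the argument, since the series $\sum_j a_j(s)\varepsilon_{sT_n-j}$ already converges in $L^2(\Omega)$ for square-summable coefficients and uncorrelated noise, and equality in law passes to $L^2$-limits of the partial sums; but you should phrase the step that way rather than importing the time-domain decay condition.
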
 

\begin{proof}
First observe that every time series of the form $\int_{-\pi}^{\pi} e^{i\lambda t} A(\lambda) \xi(d\lambda)$, $t\in\Z$, where $\xi$ is an orthogonal increment process coming from an i.i.d. noise, is strictly stationary. Thus,
\begin{align*}
\int_{-\pi}^{\pi} e^{i\lambda t_1} A_{sT_n,T_n}^0(\lambda) \xi(d\lambda) 
\overset{d}{=} \int_{-\pi}^{\pi} e^{i\lambda t_0} A_{sT_n,T_n}^0(\lambda) \xi(d\lambda)
\end{align*}
for all $t_0,t_1\in\Z$. In particular, for $t_1=sT_n$, where $s\in[0,1]$ such that $sT_n\in\{1,\ldots,T\}$, and $t_0=0$ we obtain
\begin{align*}
X_{sT_n,T_n}	=\int_{-\pi}^{\pi} e^{i\lambda sT_n} A_{sT_n,T_n}^0(\lambda) \xi(d\lambda) \overset{d}{=} \int_{-\pi}^{\pi} A_{sT_n,T_n}^0(\lambda) \xi(d\lambda).
\end{align*}
The remainder follows from Proposition \ref{prop_1_1_L2norm} and the continuity of the stochastic integral in mean square and thus in distribution with respect to the integrand.
\end{proof}

\begin{Remark}
A noticeable class of processes that are locally stationary in the time and frequency domain are time-varying AR(p) processes with continuous coefficient functions. For the mathematical details of this result we refer to \cite[p. 147]{D1996}.\\
Among the variety of different concepts for local stationarity in the literature, we mention the results from \cite{DRW2019} and \cite{V2012}. In \cite{V2012}, the author considers a triangular array $X_{t,T}$, $T\in\N$, $t=1,\ldots,T$ to be locally stationary, if for each $u\in[0,1]$ there exists a strictly stationary process $\{X_t(u),t=1,\ldots,T\}$ such that almost surely
\begin{align*}
\abs{X_{t,T}-X_t(u)}\leq (\abs{\tfrac{t}{T}-u}+\tfrac{1}{T}) U_{t,T}(u),
\end{align*}
where $U_{t,T}(u)$ are positive random variables satisfying $E[(U_{t,T}(u))^\rho]<\infty$ for some $\rho>0$ uniformly in $u,t$ and $T$.
Time-varying AR(p) processes with continuous coefficient functions can also be embedded in this framework using similar arguments as in \cite{DSR2006}.\\
More recently, the authors in \cite{DRW2019} developed a general theory for locally stationary processes based on stationary approximations. Similarly to \cite{V2012}, it is assumed that there exists a strictly stationary process $\{X_t(u),t=1,\ldots,T\}$ such that for some $q,C>0$
\begin{align}\label{equation:DRWinequalities}
\norm{X_t(u)-X_t(v)}_{L^q}\leq C \abs{u-v} \quad \text{and}\qquad \norm{X_{t,T}-X_t(\tfrac{t}{T})}_{L^q}\leq \tfrac{C}{T},
\end{align}
uniformly in $t=1,\ldots,T$ and $u,v\in[0,1]$. Based on these approximations the authors established asymptotic results as a law of large numbers and a central limit theorem, which, in turn, are used to derive asymptotic results for a maximum likelihood estimator (see \cite[Section 5]{DRW2019}). Again, time-varying AR(p) processes with continuous coefficient functions can be embedded in this framework. Recently, this work has been extended to models with infinite memory in \cite{BDW2020}.\\
In view of the statistical results obtained from the approximations (\ref{equation:DRWinequalities}), a possible characterization of local stationarity in terms of similar approximations for continuous-time models will be the topic of future work.
\end{Remark}

\subsection{L\'evy processes and orthogonal random measures}
\label{sec2-2}
In this section we lay the foundation for the definition of continuous-time locally stationary processes and briefly review L\'evy processes, orthogonal random measures and cover basic results including stochastic integration with respect to L\'evy processes and orthogonal random measures. For further insight we refer to \cite{A2009} and \cite{S2013}.

\begin{Definition}
A real-valued stochastic process $L=\{L(t),t\in \R_0^+\}$ is called L\'evy process if
\begin{enumerate}[label={(\alph*)}]
\item $L(0)=0$ almost surely,
\item for any $n\in\N$ and $t_0<t_1<t_2<\dots<t_n$, the random variables $(L(t_0),L(t_1)-L(t_0),\dots,L(t_n)-L(t_{n-1}))$ are independent,
\item for all $s,t \geq 0$, the distribution of $L(s+t)-L(s)$ does not depend on $s$ and
\item $L$ is stochastically continuous.
\end{enumerate}
\end{Definition}

\begin{Theorem}\label{theorem:levykhin}
Let $L=\{L(t),t\geq0\}$ be a real-valued L\'evy process. Then, $L(1)$ is an infinitely divisible real-valued random variable with characteristic triplet $(\gamma,\Sigma,\nu)$, where $\gamma \in \R$, $\Sigma>0$ and $\nu$ is a L\'evy measure on $\R$, i.e. $\nu(0)=0$ and $\int_{\R}(1\wedge\normabs{x}^2)\nu(dx)<\infty$. The characteristic function of $L(t)$ is given by
\begin{align}\label{eq:levykhintchine}
\begin{aligned}
\varphi_{L(t)}(z)&=E[e^{izL(t)}]=\exp(t \Psi_{L}(z)),\\
\Psi_L(z)&=\left( i\gamma z -\frac{\Sigma z^2}{2}+\int_{\R } \left(e^{i z x}-1-i z x \mathbb{1}_{Z}(x)	\right)\nu(dx)\right),
\end{aligned}
\end{align}
where $z \in \R$ and $Z=\{x \in \R, \normabs{x}\leq 1\}$.
\end{Theorem}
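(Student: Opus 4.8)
The plan is to reduce the statement to the classical Lévy–Khintchine representation for infinitely divisible laws and then propagate the exponent from $t=1$ to all $t\geq 0$. First I would establish that $L(1)$ is infinitely divisible. For any $n\in\N$, write
\begin{align*}
L(1)=\sum_{k=1}^{n}\bigl(L(\tfrac{k}{n})-L(\tfrac{k-1}{n})\bigr),
\end{align*}
and observe that by property (b) the summands are independent and by property (c) they are identically distributed, each having the law of $L(\tfrac1n)$ once $L(0)=0$ from (a) is used. Hence $L(1)$ is a sum of $n$ i.i.d.\ random variables for every $n$, i.e.\ it is infinitely divisible. The classical characterisation of infinitely divisible distributions (see \cite{S2013}) then supplies a characteristic triplet $(\gamma,\Sigma,\nu)$ of the type asserted in the theorem such that $\varphi_{L(1)}(z)=\exp(\Psi_L(z))$ with $\Psi_L$ of the stated form; in particular $\varphi_{L(1)}$ has no zeros.

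Next I would fix $z\in\R$ and set $\phi(t):=\varphi_{L(t)}(z)$. Splitting $L(s+t)=L(s)+(L(s+t)-L(s))$ and using independence (b) together with stationarity (c) of the increments gives the multiplicative relation
\begin{align*}
\phi(s+t)=E\bigl[e^{izL(s)}\bigr]\,E\bigl[e^{iz(L(s+t)-L(s))}\bigr]=\phi(s)\,\phi(t),\qquad s,t\geq0.
\end{align*}
Stochastic continuity (d) forces $t\mapsto L(t)$ to be continuous in distribution, so $\phi$ is continuous; being a continuous, nowhere-vanishing solution of the Cauchy exponential equation, it must satisfy $\phi(t)=\phi(1)^{t}=\exp(t\,\Psi_L(z))$ for all $t\geq0$. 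Since $z$ was arbitrary, this yields precisely (\ref{eq:levykhintchine}).

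The genuinely hard analytic content — the existence of the representing triplet for an infinitely divisible law — is not reproved here but invoked from the standard theory in \cite{S2013}. The only points requiring care are verifying the i.i.d.\ structure of the dyadic increments from (a)--(c), and checking that the continuous, zero-free solution of $\phi(s+t)=\phi(s)\phi(t)$ is indeed the exponential $\phi(1)^{t}$, for which the non-vanishing of characteristic functions of infinitely divisible laws is essential.
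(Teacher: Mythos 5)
This statement is the classical L\'evy--Khintchine theorem; the paper states it without proof, citing the standard references (\cite{A2009}, \cite{S2013}), so there is no in-paper argument to compare against. Your outline is the standard textbook proof and is correct: the dyadic decomposition into i.i.d.\ increments gives infinite divisibility of $L(1)$, the classical representation theorem supplies the triplet, and the semigroup relation $\phi(s+t)=\phi(s)\phi(t)$ together with stochastic continuity propagates the exponent to all $t\geq 0$. Two small points deserve care. First, writing $\phi(t)=\phi(1)^{t}$ for a complex-valued $\phi$ requires fixing a branch: the clean route is to observe that $\phi$ is continuous, $\phi(0)=1$, and $\phi$ is zero-free (which already follows from the functional equation, since $\phi(t_0)=0$ would force $\phi(t_0/2^{n})=0$ for all $n$, contradicting continuity at $0$), so $\phi$ admits a distinguished continuous logarithm $g$ with $g(0)=0$; the Cauchy equation for $g$ then gives $g(t)=t\,g(1)$, and $g(1)=\Psi_L(z)$ because $\Psi_L$ is itself a continuous logarithm of $\varphi_{L(1)}$ vanishing at $z=0$. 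Second, the hypothesis $\Sigma>0$ in the statement should really be $\Sigma\geq 0$ (the Gaussian part of a L\'evy process may vanish); this is an imprecision in the paper's formulation rather than in your argument.
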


 In the remainder we work with two-sided L\'evy process, i.e. $L(t)=L_1(t)\mathbb{1}_{\{t\geq0\}} - L_2(-t)\mathbb{1}_{\{t<0\}}$, where $L_1$ and $L_2$ are independent copies of a one-sided L\'evy process. Throughout this paper, it will be assumed that 
\begin{align}\label{ass_L}
		E[L(1)]=\gamma + \int_{|x|> 1} x \nu(dx)=0 
		\text{\quad and \quad} 
		E[L(1)^2]=\Sigma + \int_{x\in\R} x^2 \nu(dx)<\infty.
\end{align}
Thus, the above assumptions on the L\'evy process imply that $\int_\R x^2 \nu(dx)<\infty$. 
Occasionally, we will denote $\Sigma_L:=Var(L(1))=\Sigma+\int_{x\in\R} x^2 \nu(dx)$. \\
If the L\'evy process satisfies (\ref{ass_L}) and $f:\R\times\R\rightarrow \R$ is a $\mathcal{B}(\R\times \R) - \mathcal{B}(\R)$-measurable function satisfying $f(t,\cdot)\in L^2(\R)$, then the integral $X(t)=\int_{\R} f(t,s) L(ds)$, $t\in\R$ exists in $L^2$ (see e.g. \cite{MS2007}).

\begin{Definition} [{\cite[Definition 2.3.5]{K2002}}]
A family $\{\xi(\Delta)\}_{\Delta\in\mathcal{B}(\R)}$ of $\C$-valued random variables is called an orthogonal random measure (ORM) if
\begin{enumerate}[label={(\alph*)}]
\item $\xi(\Delta)\in L^2(\mathcal{B}(\R),\C)$ for all bounded $\Delta\in\mathcal{B}(\R)$,
\item $\xi(\emptyset)=0$,
\item $\xi(\Delta_1\cup\Delta_2)=\xi(\Delta_1)+\xi(\Delta_2)$ a.s. whenever $\Delta_1\cap\Delta_2=\emptyset$ and
\item $F:\mathcal{B}(\R)\rightarrow \C$ such that $F(\Delta)=E[\xi(\Delta)\overline{\xi(\Delta)}]$ defines a $\sigma$-additive positive definite measure and it holds that $E[\xi(\Delta_1)\overline{\xi(\Delta_2)}]=F(\Delta_1\cap\Delta_2)$ for all $\Delta_1,\Delta_2\in\mathcal{B}(\R)$.
\end{enumerate}
\end{Definition} 

\noindent$F$ is referred to as the spectral measure of $\xi$.

\begin{Theorem} [{\cite[Theorem 3.5]{MS2007} }]
\label{thm_Levy_ORM}
Let $L$ be a two-sided L\'evy process satisfying (\ref{ass_L}). Then, there exists an ORM $\Phi_L$ with spectral measure $F_L$, such that 
\begin{enumerate}[label={(\alph*)}]		
\item $E[\Phi_L(\Delta)]=0$ for any bounded $\Delta\in\mathcal{B}(\R)$,
\item $F_L(dt)=\frac{\Sigma_L}{2\pi}$dt and
\item $\Phi_L$ is uniquely determined by $\Phi_L([a,b)):=\int_{-\infty}^\infty \frac{e^{-i\mu a}-e^{-i\mu b}}{2\pi i\mu} L(d\mu)$.
\end{enumerate}
\end{Theorem}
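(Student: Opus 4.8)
The plan is to realise $\Phi_L$ as a stochastic integral against the Lévy random measure $L(d\mu)$ and to read off its defining properties from the $L^2$-isometry implicit in the integration theory recalled above. Since $L$ satisfies (\ref{ass_L}), for $f,g\in L^2(\R,\C)$ the integrals $\int_\R f\,dL$ and $\int_\R g\,dL$ exist in $L^2$ and obey $E\big[\int_\R f\,dL\,\overline{\int_\R g\,dL}\big]=\Sigma_L\int_\R f\overline{g}\,d\lambda$ together with $E\big[\int_\R f\,dL\big]=0$; writing $f=\mathfrak{Re}(f)+i\,\mathfrak{Im}(f)$ and using the stationary, independent, mean-zero increments of the real process $L$, this is the natural extension of the real-valued isometry to complex integrands. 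For a half-open interval $[a,b)$ I set $g_{[a,b)}(\mu):=\frac{e^{-i\mu a}-e^{-i\mu b}}{2\pi i\mu}=\frac{1}{2\pi}\int_a^b e^{-i\mu x}\,dx$ and define $\Phi_L([a,b)):=\int_\R g_{[a,b)}(\mu)\,L(d\mu)$.

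First I would check that $g_{[a,b)}\in L^2(\R,\C)$: near $\mu=0$ the numerator is $O(\mu)$, so $g_{[a,b)}$ stays bounded, while $\abs{g_{[a,b)}(\mu)}\le \frac{1}{\pi\abs{\mu}}$ for large $\abs{\mu}$, giving square-integrability. Hence the defining integral exists in $L^2$, so $\Phi_L([a,b))\in L^2$, and property (a) follows immediately from $E[\int_\R g_{[a,b)}\,dL]=0$.

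The heart of the argument is the covariance computation. For intervals $\Delta_1,\Delta_2$ the isometry gives $E[\Phi_L(\Delta_1)\overline{\Phi_L(\Delta_2)}]=\Sigma_L\int_\R g_{\Delta_1}\overline{g_{\Delta_2}}\,d\lambda$. Observing that $g_{\Delta_j}=\frac{1}{2\pi}\widehat{\mathbb{1}_{\Delta_j}}$ with $\widehat{h}(\mu)=\int_\R h(x)e^{-i\mu x}\,dx$, Plancherel's identity $\int_\R\widehat{h_1}\,\overline{\widehat{h_2}}\,d\mu=2\pi\int_\R h_1\overline{h_2}\,d\lambda$ yields $\int_\R g_{\Delta_1}\overline{g_{\Delta_2}}\,d\lambda=\frac{1}{2\pi}\lambda(\Delta_1\cap\Delta_2)$, so that $E[\Phi_L(\Delta_1)\overline{\Phi_L(\Delta_2)}]=\frac{\Sigma_L}{2\pi}\lambda(\Delta_1\cap\Delta_2)$. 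Taking $\Delta_1=\Delta_2=\Delta$ identifies the candidate spectral measure $F_L(d\mu)=\frac{\Sigma_L}{2\pi}\,d\mu$ (property (b)); for disjoint $\Delta_1,\Delta_2$ it forces orthogonality; and finite additivity follows from $g_{[a,c)}=g_{[a,b)}+g_{[b,c)}$ together with linearity of the integral, which also gives $\Phi_L(\emptyset)=0$.

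Finally, I would extend $\Phi_L$ from the semiring of half-open intervals to all of $\mathcal{B}(\R)$. The set function just constructed is finitely additive, $L^2$-valued, and satisfies the isometry with respect to the $\sigma$-finite control measure $\frac{\Sigma_L}{2\pi}\lambda$; by the standard extension theorem for orthogonally scattered $L^2$-valued measures (cf. \cite[Definition 2.3.5]{K2002} and the construction in \cite{BD1996}) it extends uniquely to a $\sigma$-additive ORM on $\mathcal{B}(\R)$ with spectral measure $F_L$, which is exactly the uniqueness asserted in (c). I expect the only real obstacles to be technical rather than conceptual: keeping track of the Plancherel normalisation that produces the constant $\frac{1}{2\pi}$, and justifying both the complex-integrand isometry and the $\sigma$-additive $L^2$-valued extension — each of which is routine given the integration theory already cited.
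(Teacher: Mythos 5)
Your construction is correct and coincides with the one the paper points to: the theorem is cited from \cite[Theorem 3.5]{MS2007} rather than proved here, but the paper's remark immediately below it (that the proof rests on $L^2$ Fourier theory and yields the identity (\ref{eq_link_L_ORM})) describes exactly your route, since taking $\varphi=\mathbb{1}_{[a,b)}$ in (\ref{eq_link_L_ORM}) reproduces your $g_{[a,b)}$ and the Plancherel computation giving $F_L(d\mu)=\frac{\Sigma_L}{2\pi}\,d\mu$. The remaining steps you defer (the complex-integrand isometry and the $\sigma$-additive extension from the semiring of half-open intervals) are indeed routine given the cited integration theory.
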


\noindent In the proof of the above theorem the standard theory of Fourier transforms on $L^2(\R)$ (see e.g. \cite[Chapter 2]{C1989} for an introduction) is used to show
\begin{align}\label{eq_link_L_ORM}
\int_{-\infty}^\infty \varphi(\mu) \Phi_L(d\mu)=\frac{1}{\sqrt{2\pi}}\int_{-\infty}^\infty \widehat{\varphi}(u) L(du)
\end{align}
for all complex functions $\varphi\in L^2(\R)$ and their (inverse) Fourier transforms $\widehat{\varphi}$, where
\begin{align*}
\widehat{\varphi}(u)=\frac{1}{\sqrt{2\pi}}\int_{-\infty}^\infty e^{-i\mu u} \varphi(\mu) d\mu
\quad \text{and} \quad
\varphi(\mu)=\frac{1}{\sqrt{2\pi}}\int_{-\infty}^\infty e^{i\mu u} \widehat{\varphi}(u) du.
\end{align*}
We also recall that for two complex functions $f,g\in L^2(\R)$ and their Fourier transforms $\widehat{f}, \widehat{g}$, it follows that $\hat{f}, \hat{g}\in L^2(\R)$ and, due to \cite[p. 189]{R1991},
\begin{align*}
\int_{-\infty}^\infty f(\mu)\overline{g(\mu)} d\mu = \int_{-\infty}^\infty \widehat{f}(u)\overline{\widehat{g}(u)} du.
\end{align*}

\section{Locally stationary processes in continuous-time}
\label{sec3}
Analogously to Section \ref{sec2-1} one can define a (stationary) stochastic process $\{Y(t)\}_{t\in\R}$ via the representation as a linear process or the spectral representation, i.e.
\begin{align*}
Y(t)	= \int_{\R} g(t-u) L(du) \quad \text{or} \quad
Y(t) 	= \int_{\R} e^{i\mu t} A(\mu) \Phi_L(d\mu), \qquad t\in\R,
\end{align*}
where $g$ and $A$ are square integrable functions and $L$ is a two-sided L\'evy process with corresponding ORM $\Phi_L$. As we consistently use $L^2$-integrals to define the process both in the time and the frequency domain and the Fourier transform is an isometry on $L^2$ the two definitions are equivalent. Hence, from (\ref{eq_link_L_ORM}) it follows that the transfer function $A$ and the kernel $g$ satisfy 
\begin{align*}
g(u) = \frac{1}{2\pi} \int_{-\infty}^\infty e^{i\mu u} A(\mu) d\mu \quad \text{and} \quad
A(\mu)= \int_{-\infty}^\infty e^{-i\mu v} g(v) dv.
\end{align*}
Now, we allow the kernel function and the transfer function be time dependent, leading to
\begin{align*}
Y(t) = \int_{\R} e^{i\mu t} A(t,\mu) \Phi_L(d\mu) = \int_{\R} g(t,t-u) L(du), \qquad t\in\R,
\end{align*}
where
\begin{align*}	
g(t,u)	= \frac{1}{2\pi} \int_{-\infty}^\infty e^{i\mu u} A(t,\mu) d\mu\quad \text{and} \quad
A(t,\mu)	= \int_{-\infty}^\infty e^{-i\mu u} g(t,u) du.
\end{align*}
As we are interested in real-valued processes we demand $g$ to be real-valued or equivalently that $\overline {A(\mu)}=A(-\mu)$ for all $\mu\in\mathbb{R}$.\\
To be able to define local stationarity analogously to Section \ref{sec2-1}, not only a time varying representation, but also a sequence of stochastic processes is needed. The intuitive idea is to take a limiting kernel $g$ and a sequence of kernels $g_N^0$ defining the processes in the time domain such that 
\begin{align*}
\norm{g_N^0(t,\cdot)-g(\tfrac{t}{N},\cdot)}_{L^2} \underset{N\rightarrow\infty}{\longrightarrow} 0.
\end{align*}
However, for the limiting (stationary) process we prefer to fix a time $t\in\R$ rather than dealing with fractions $\tfrac{t}{N}$. By replacing $t$ by $Nt$ this leads to the following definition.

\begin{Definition}\label{def_locstat_cont}
A sequence of stochastic processes $\{Y_N(t),t\in\R\}_{N\in\N}$ is said to be locally stationary in the time domain, if it can be represented as
\begin{align*}
Y_N(t) = \int_{\R} g_N^0(Nt,Nt-u) L(du),\quad \text{for all }t\in\R, N\in\N,
\end{align*}
where $L$ is a two-sided L\'evy process and the kernel functions $g_N^0:\R\times\R\rightarrow\R$ satisfy 
\begin{enumerate}[label={(\alph*)}]
\item $g_N^0(Nt,\cdot)\in L^2(\R)$ for all $t\in\R, N\in\N$ and
\item there exists a (local/limiting kernel) function $g:\R\times\R\rightarrow\R$ such that the mapping $\R\rightarrow L^2(\R)$, $t\mapsto g(t,\cdot)$ is continuous and 
\begin{align*}
g_N^0(Nt,\cdot) \underset{N\rightarrow\infty}{\overset{L^2}{\longrightarrow}} g(t,\cdot) \quad \text{for all } t\in\R.
\end{align*}
\end{enumerate}
\end{Definition}

\begin{Definition}\label{def_locstat_cont2}
A sequence of stochastic processes $\{Y_N(t),t\in\R\}_{N\in\N}$ is said to be locally stationary in the frequency domain, if it can be represented as
\begin{align}\label{equation:locallystationaryprocessfrequencydomain}
Y_N(t) = \int_{\R} e^{i\mu Nt} A_N^0(Nt,\mu) \Phi_L(d\mu),\quad \text{for all }t\in\R, N\in\N,
\end{align}
where $\Phi_L$ is the ORM of a two-sided L\'evy process $L$ and the transfer functions $A_N^0:\R\times\R\mapsto\C$ satisfy
\begin{enumerate}[label={(\alph*)}]
\item $A_N^0(Nt,\cdot)\in L^2$ for all $t\in\R, N\in\N$, 
\item $\overline{A_N^0(\cdot,\cdot)}=A_N^0(\cdot,-\cdot)$ and
\item there exists a (local/limiting transfer) function $A:\R\times\R\rightarrow\C$ with $\overline{A(\cdot,\cdot)}=A(\cdot,-\cdot)$ such that the mapping $\R \mapsto L^2(\R)$, $t\mapsto A(t,\cdot)$ is continuous and
\begin{align*}
A_N^0(Nt,\cdot) \underset{N\rightarrow\infty}{\overset{L^2}{\longrightarrow}} A(t,\cdot),\quad \text{for all } t\in\R.
\end{align*}
\end{enumerate}
\end{Definition}

In contrast to the discrete time case it is now irrelevant whether we use the definition in the time or the frequency domain. Therefore, we will just speak of ``locally stationary'' in both cases. 

\begin{Proposition} 
The Definitions \ref{def_locstat_cont} and \ref{def_locstat_cont2} are equivalent. Moreover, the relationship between the (limiting) transfer function and the (limiting) kernel, using their Fourier transforms, is given by
\begin{align*}
A_N^0(Nt,\mu) &= \int_{-\infty}^\infty e^{-i\mu u} g_N^0(Nt,u) du ,\qquad & \qquad A(t,\mu) &= \int_{-\infty}^\infty e^{-i\mu u} g(t,u) du,\\	g_N^0(Nt,u) &= \frac{1}{2\pi} \int_{-\infty}^\infty e^{i\mu u} A_N^0(Nt,\mu) d\mu\text{ and}\qquad & \qquad g(t,u) &= \frac{1}{2\pi} \int_{-\infty}^\infty e^{i\mu u} A(t,\mu) d\mu.
\end{align*}
\end{Proposition}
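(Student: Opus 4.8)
The plan is to exploit the fact that, in the unitary normalization fixed before (\ref{eq_link_L_ORM}), the Fourier transform is a bijective isometry on $L^2(\R)$. Passing between a kernel and a transfer function via the Fourier relations asserted in the statement is therefore a reversible operation that preserves every analytic feature required by the two definitions, so it suffices to verify that the integral representations coincide and that the regularity conditions transfer. I would define the maps between the objects by the stated Fourier formulas and then check, in turn, the representation identity, membership in $L^2(\R)$, real-valuedness, $L^2$-convergence, and continuity in $t$.

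First I would establish the representation identity, which is the crux. Fix $N\in\N$ and $t\in\R$ and set $\varphi(\mu):=e^{i\mu Nt}A_N^0(Nt,\mu)$. Since $|e^{i\mu Nt}|=1$ and $A_N^0(Nt,\cdot)\in L^2(\R)$, we have $\varphi\in L^2(\R)$, so (\ref{eq_link_L_ORM}) applies and yields $Y_N(t)=\int_\R\varphi(\mu)\Phi_L(d\mu)=\tfrac{1}{\sqrt{2\pi}}\int_\R\widehat\varphi(u)L(du)$. A short computation gives $\widehat\varphi(u)=\tfrac{1}{\sqrt{2\pi}}\int_\R e^{-i\mu(u-Nt)}A_N^0(Nt,\mu)\,d\mu$, and after the substitution $v=Nt-u$ one identifies $\tfrac{1}{\sqrt{2\pi}}\widehat\varphi(u)$ with $g_N^0(Nt,Nt-u)$, where $g_N^0(Nt,v):=\tfrac{1}{2\pi}\int_\R e^{i\mu v}A_N^0(Nt,\mu)\,d\mu$ is precisely the inverse transform in the statement. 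Hence $Y_N(t)=\int_\R g_N^0(Nt,Nt-u)L(du)$, the required time-domain representation.

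Next I would settle the remaining conditions of Definition \ref{def_locstat_cont}. After defining $g(t,\cdot)$ as the inverse transform of $A(t,\cdot)$, the memberships $g_N^0(Nt,\cdot)\in L^2(\R)$, the convergence of $g_N^0(Nt,\cdot)$ to $g(t,\cdot)$ in $L^2$, and the continuity of $t\mapsto g(t,\cdot)$ all follow at once from Plancherel, since the inverse Fourier transform is, up to the reflection $u\mapsto-u$ and a constant, an $L^2$-isometry that carries the corresponding properties of $A_N^0(Nt,\cdot)$ and $A(t,\cdot)$ from Definition \ref{def_locstat_cont2} across. Real-valuedness is where the conjugate-symmetry hypotheses enter: from $\overline{A_N^0(Nt,\mu)}=A_N^0(Nt,-\mu)$ one computes, with the change of variables $\mu\mapsto-\mu$, that $\overline{g_N^0(Nt,u)}=g_N^0(Nt,u)$, and likewise for $g$. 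The converse direction runs the same argument backwards: setting $A_N^0(Nt,\mu):=\int_\R e^{-i\mu u}g_N^0(Nt,u)\,du$ and $A(t,\mu):=\int_\R e^{-i\mu u}g(t,u)\,du$, reality of $g_N^0$ and $g$ gives the required conjugate symmetry, Plancherel supplies $L^2$-membership, convergence, and continuity, and (\ref{eq_link_L_ORM}) read in reverse yields the frequency-domain representation, the two constructions being mutual inverses because the Fourier transform is bijective on $L^2(\R)$.

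The main obstacle I anticipate is essentially bookkeeping: aligning the shift $Nt-u$ and the normalization constants when applying (\ref{eq_link_L_ORM}), so that the deterministic Fourier identity relating $A_N^0$ and $g_N^0$ is exactly the one making the two stochastic integrals represent the same element of $L^2$. Once this identity is pinned down, every remaining assertion is a direct consequence of the Plancherel isometry together with the conjugate-symmetry conditions, and no further analytic input is needed.
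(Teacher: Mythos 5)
Your proposal is correct and follows essentially the same route as the paper, which simply invokes Plancherel's theorem together with the link (\ref{eq_link_L_ORM}) between the ORM integral and the L\'evy integral; you have merely written out the bookkeeping (the shift $Nt-u$, the normalization $\tfrac{1}{2\pi}$, the transfer of $L^2$-membership, convergence, continuity, and conjugate symmetry) that the paper leaves implicit.
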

\begin{proof}
The result follows immediately from the Definitions \ref{def_locstat_cont} and \ref{def_locstat_cont2} using Plancherel's theorem.
\end{proof}

The following lemma provides sufficient conditions for the continuity conditions on the mappings $t\mapsto A(t,\cdot)$ and $t\mapsto g(t,\cdot)$ from Definition \ref{def_locstat_cont} and \ref{def_locstat_cont2}.

\begin{Lemma}\label{lem:L2cont}
Let $A:\R\times\R\rightarrow\C$ be a function, which is continuous in the first argument such that for all $t\in\mathbb{R}$ there exists an $\epsilon_t>0$ and a real function $f_t\in  L^2(\R)$ such that $\abs{A(s,\cdot)}\leq f_t(\cdot)$ for all $s\in[t-\epsilon_t,t+\epsilon_t]$. Then, the mapping $\R \to L^2(\R)$, $t\mapsto A(t,\cdot)$ is continuous.
\end{Lemma}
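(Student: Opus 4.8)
The plan is to prove continuity of the map $t\mapsto A(t,\cdot)$ at an arbitrary point $t_0\in\R$ by verifying \emph{sequential} continuity; since both $\R$ and $L^2(\R)$ are metric spaces, this is equivalent to continuity. So I would fix $t_0\in\R$ together with an arbitrary sequence $t_n\to t_0$ and aim to show $\Lnorm{A(t_n,\cdot)-A(t_0,\cdot)}\to 0$. The whole argument will then be an application of the dominated convergence theorem, with pointwise convergence supplied by continuity of $A$ in the first argument and the dominating function supplied by the local $L^2$-bound in the hypothesis.

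First I would invoke the hypothesis at $t_0$ to obtain $\epsilon_{t_0}>0$ and $f_{t_0}\in L^2(\R)$ with $\abs{A(s,\mu)}\leq f_{t_0}(\mu)$ for all $s\in[t_0-\epsilon_{t_0},t_0+\epsilon_{t_0}]$ and (almost) every $\mu$. Since $t_n\to t_0$, there is an $n_0$ with $t_n\in[t_0-\epsilon_{t_0},t_0+\epsilon_{t_0}]$ for all $n\geq n_0$, and from here on I would only consider such $n$. In particular $\abs{A(t_0,\mu)}\leq f_{t_0}(\mu)$, so that $A(t_0,\cdot)\in L^2(\R)$, and the triangle inequality yields the pointwise estimate
\[
\abs{A(t_n,\mu)-A(t_0,\mu)}^2 \leq \left(\abs{A(t_n,\mu)}+\abs{A(t_0,\mu)}\right)^2 \leq 4\, f_{t_0}(\mu)^2,
\]
whose right-hand side is integrable because $f_{t_0}\in L^2(\R)$. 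This is a dominating function that is uniform in $n$.

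The second ingredient is the pointwise convergence of the integrand: continuity of $A$ in the first argument gives $A(t_n,\mu)\to A(t_0,\mu)$ for each fixed $\mu$, hence $\abs{A(t_n,\mu)-A(t_0,\mu)}^2\to 0$ pointwise. Combining this with the dominating function above, the dominated convergence theorem delivers
\[
\Lnorm{A(t_n,\cdot)-A(t_0,\cdot)}^2 = \int_\R \abs{A(t_n,\mu)-A(t_0,\mu)}^2\, d\mu \xrightarrow[n\to\infty]{} 0.
\]
As the sequence $t_n\to t_0$ was arbitrary, this establishes sequential continuity at $t_0$, and since $t_0$ was arbitrary, the map $t\mapsto A(t,\cdot)$ is continuous.

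Since the proof reduces to a single application of dominated convergence, I do not expect a genuine obstacle. The only point demanding care is that the domination is purely \emph{local} — it is valid only on the neighborhood $[t_0-\epsilon_{t_0},t_0+\epsilon_{t_0}]$ — which is precisely why passing to the tail $n\geq n_0$ of the sequence, rather than seeking a single global bound, is essential. I would note that the identical argument applies verbatim to the kernel $g$ in Definition \ref{def_locstat_cont}, so no separate reasoning is needed to cover the continuity condition there.
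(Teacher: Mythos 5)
Your proposal is correct and is exactly the argument the paper has in mind: the paper's proof is the one-line remark ``straightforward application of the dominated convergence theorem,'' and your write-up supplies precisely the missing details (sequential continuity, the local dominating function $4f_{t_0}^2$ valid on the tail $n\geq n_0$, and pointwise convergence from continuity in the first argument). No discrepancy to report.
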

\begin{proof}
Straightforward application of the dominated convergence theorem.
\end{proof}


In principle it is possible to replace the L\'evy process by a process with weakly stationary uncorrelated increments and the ORM induced by the L\'evy process by an arbitrary ORM. The resulting processes would be (locally) weakly stationary. However, to us it seems at the moment not worthwhile to pursue this any further for the following reason.\\
To derive a continuous-time analogue of Proposition \ref{prop_1_1_ConvergenceInDistribution} the stationary and independent increments of the driving L\'evy process $L$ are essential. Therefore, also the orthogonal random measure in Definition \ref{def_locstat_cont2} has to be generated by a stochastic process on $\R$ with independent and stationary increments, i.e by a L\'evy process.\\
We note that this also ensures for all $\tilde{t}\in\R$ that the limiting process $Y_{\tilde{t}}(t)=\int_\R g(\tilde{t}, t-u) L(du)$ is strictly stationary.
The next proposition provides the aforementioned continuous-time analogue of Proposition \ref{prop_1_1_ConvergenceInDistribution}.

\begin{Proposition} 
Let $\{Y_N(t), t\in\R\}_{N\in\N}$ be a locally stationary process. Then, for fixed $t\in\R$ 
\begin{align*}
Y_N(t) \overset{d}{\underset{n\rightarrow\infty}{\longrightarrow}} \int_{\R} A(t,\mu) \Phi_L(d\mu) = \int_{\R} g(t,-u) L(du).
\end{align*}
\end{Proposition}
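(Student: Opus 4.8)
The plan is to mirror the strategy of the discrete-time Proposition \ref{prop_1_1_ConvergenceInDistribution}: first strip off the oscillating exponential using strict stationarity, then pass to the limit via $L^2$-convergence of the transfer functions, and finally translate the frequency-domain limit back to the time domain.

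First, I would fix $t\in\R$ and $N\in\N$ and freeze the transfer function by setting $B:=A_N^0(Nt,\cdot)\in L^2(\R)$. As noted in the discussion preceding the statement, for a fixed $B\in L^2(\R)$ the process $s\mapsto \int_\R e^{i\mu s} B(\mu)\,\Phi_L(d\mu)$ is strictly stationary; indeed, via the link (\ref{eq_link_L_ORM}) it admits the moving-average representation $\int_\R b(s-u)\,L(du)$ with $b$ the inverse Fourier transform of $B$, and the stationary, independent increments of $L$ render any such moving average strictly stationary. Evaluating this stationary process at $s=Nt$ and at $s=0$ therefore yields equal one-dimensional marginals, so
\begin{align*}
Y_N(t) = \int_\R e^{i\mu Nt} A_N^0(Nt,\mu)\, \Phi_L(d\mu) \overset{d}{=} \int_\R A_N^0(Nt,\mu)\, \Phi_L(d\mu).
\end{align*}

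Next I would let $N\to\infty$. By Definition \ref{def_locstat_cont2}(c) the transfer functions converge in $L^2(\R)$, i.e. $A_N^0(Nt,\cdot)\xrightarrow{L^2} A(t,\cdot)$. Since integration against the ORM $\Phi_L$ is an isometry from $L^2(\R)$ (equipped with the spectral measure $F_L$) into $L^2(\Omega)$, the stochastic integral is continuous with respect to $L^2$-convergence of its integrand, whence
\begin{align*}
\int_\R A_N^0(Nt,\mu)\, \Phi_L(d\mu) \xrightarrow[N\to\infty]{L^2} \int_\R A(t,\mu)\, \Phi_L(d\mu).
\end{align*}
Convergence in $L^2(\Omega)$ implies convergence in distribution, and since convergence in distribution depends only on laws, the distributional equality of the previous step transfers to give $Y_N(t)\overset{d}{\to}\int_\R A(t,\mu)\,\Phi_L(d\mu)$. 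Finally, I would identify this limit with the time-domain integral: applying (\ref{eq_link_L_ORM}) to $\varphi=A(t,\cdot)$ and using that $\tfrac{1}{\sqrt{2\pi}}\widehat{A(t,\cdot)}(u)=\tfrac{1}{2\pi}\int_\R e^{-i\mu u}A(t,\mu)\,d\mu=g(t,-u)$ (by the kernel--transfer relation of the preceding Proposition) yields $\int_\R A(t,\mu)\,\Phi_L(d\mu)=\int_\R g(t,-u)\,L(du)$, which is the asserted identity.

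The main obstacle I anticipate is the strict-stationarity step: one must be careful that it is the \emph{one-dimensional} marginal of the frozen-transfer-function process that is shift-invariant, and that this shift-invariance genuinely rests on the stationary independent increments of $L$ rather than on mere orthogonality of increments. This is precisely why a L\'evy-driven ORM, and not an arbitrary ORM, is required here, as already flagged in the remarks preceding the statement. The remaining steps are routine applications of the $L^2$-isometry of the ORM integral and the Fourier link (\ref{eq_link_L_ORM}).
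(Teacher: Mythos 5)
Your proposal is correct and follows essentially the same route as the paper's proof: a strict-stationarity argument (resting on the stationary independent increments of $L$) to remove the factor $e^{i\mu Nt}$, followed by the $L^2$-isometry/continuity of the ORM integral to pass to the limit in distribution. You simply spell out the two steps the paper leaves terse, and your added identification of the frequency-domain limit with $\int_\R g(t,-u)\,L(du)$ via (\ref{eq_link_L_ORM}) is accurate.
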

\begin{proof}
For $t\in\R$ we obtain, using a stationarity argument,
\begin{align*}
Y_N(t) = \int_{\R} e^{i\mu Nt} A_N^0(Nt,\mu) \Phi_L(d\mu) \overset{d}{=} \int_{\R} A_N^0(Nt,\mu) \Phi_L(d\mu).
\end{align*}
The remainder follows from the continuity of the stochastic integral in mean square and thus in distribution with respect to the integrand.
\end{proof}

\begin{Proposition}\label{prop_2___AsymptoticUncorrelated}
Let $\{Y_N(t), t\in\R\}_{N\in\N}$ be a locally stationary sequence and $t_1,t_2\in\R$ such that $t_1\neq t_2$. Then, $Y_N(t_1)$ and $Y_N(t_2)$ are asymptotically uncorrelated, i.e. $Cov(Y_N(t_1),Y_N(t_2))\rightarrow0$ as $N\rightarrow\infty$.
\end{Proposition}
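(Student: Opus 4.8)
The plan is to compute the covariance explicitly via the $L^2$-isometry of the stochastic integral with respect to the orthogonal random measure $\Phi_L$, and then to split the resulting integral into a \emph{limiting} part, which vanishes by the Riemann--Lebesgue lemma, and an \emph{error} part, which vanishes by the $L^2$-convergence built into Definition \ref{def_locstat_cont2}. First I would note that, since $E[\Phi_L(\Delta)]=0$ for bounded $\Delta$, both $Y_N(t_1)$ and $Y_N(t_2)$ have mean zero, so that $Cov(Y_N(t_1),Y_N(t_2))=E[Y_N(t_1)\overline{Y_N(t_2)}]$ (recall the processes are real-valued). Using the spectral measure $F_L(d\mu)=\tfrac{\Sigma_L}{2\pi}d\mu$ from Theorem \ref{thm_Levy_ORM} and the isometry $E[\int f\,d\Phi_L\,\overline{\int h\,d\Phi_L}]=\int f\overline{h}\,dF_L$, the representation \eqref{equation:locallystationaryprocessfrequencydomain} gives
\begin{align*}
Cov(Y_N(t_1),Y_N(t_2)) = \frac{\Sigma_L}{2\pi}\int_{\R} e^{i\mu N(t_1-t_2)} A_N^0(Nt_1,\mu)\overline{A_N^0(Nt_2,\mu)}\,d\mu.
\end{align*}

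Next I would replace the integrand $A_N^0(Nt_1,\cdot)\overline{A_N^0(Nt_2,\cdot)}$ by its limit $A(t_1,\cdot)\overline{A(t_2,\cdot)}$ at the cost of an $L^1$-error. Adding and subtracting $A_N^0(Nt_1,\cdot)\overline{A(t_2,\cdot)}$ and applying the Cauchy--Schwarz inequality yields
\begin{align*}
\norm{A_N^0(Nt_1,\cdot)\overline{A_N^0(Nt_2,\cdot)} - A(t_1,\cdot)\overline{A(t_2,\cdot)}}_{L^1}
&\leq \norm{A_N^0(Nt_1,\cdot)}_{L^2}\,\norm{A_N^0(Nt_2,\cdot)-A(t_2,\cdot)}_{L^2}\\
&\quad + \norm{A_N^0(Nt_1,\cdot)-A(t_1,\cdot)}_{L^2}\,\norm{A(t_2,\cdot)}_{L^2}.
\end{align*}
By Definition \ref{def_locstat_cont2}(c) both $L^2$-differences tend to $0$; since $\norm{A_N^0(Nt_1,\cdot)}_{L^2}$ converges it is bounded in $N$, so the entire error tends to $0$. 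As $|e^{i\mu N(t_1-t_2)}|=1$, the corresponding contribution to the covariance vanishes as $N\to\infty$.

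It then remains to treat the limiting integral $\tfrac{\Sigma_L}{2\pi}\int_\R e^{i\mu N(t_1-t_2)}A(t_1,\mu)\overline{A(t_2,\mu)}\,d\mu$. Here I would observe that $h:=A(t_1,\cdot)\overline{A(t_2,\cdot)}\in L^1(\R)$ as a product of two $L^2$-functions (each $A(t_i,\cdot)\in L^2(\R)$ by the continuity assumption on $t\mapsto A(t,\cdot)$), so that this integral is, up to the constant $\tfrac{\Sigma_L}{2\pi}$, the value of the Fourier transform of $h$ at $-N(t_1-t_2)$. Since $t_1\neq t_2$ forces $N(t_1-t_2)\to\pm\infty$, the Riemann--Lebesgue lemma drives the integral to $0$. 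Combining the two parts gives $Cov(Y_N(t_1),Y_N(t_2))\to 0$.

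I expect the only delicate point to be the uniform-in-$N$ bound on $\norm{A_N^0(Nt_1,\cdot)}_{L^2}$ needed in the Cauchy--Schwarz estimate above; this is harmless because an $L^2$-convergent sequence is automatically norm-bounded, but it must be invoked explicitly rather than taken for granted. Everything else is a routine combination of the isometry, H\"older's inequality, and Riemann--Lebesgue, and the essential structural feature that makes the argument work is that the oscillatory factor $e^{i\mu N(t_1-t_2)}$ — which is precisely what distinguishes $t_1\neq t_2$ from $t_1=t_2$ — acts only on the (fixed) limiting integrand, while the error term is handled by magnitude bounds alone.
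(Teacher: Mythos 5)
Your proof is correct, but it takes a genuinely different route from the paper's. You work in the frequency domain: after writing $Cov(Y_N(t_1),Y_N(t_2))=\tfrac{\Sigma_L}{2\pi}\int_\R e^{i\mu N(t_1-t_2)}A_N^0(Nt_1,\mu)\overline{A_N^0(Nt_2,\mu)}\,d\mu$ via the ORM isometry (the same identity the paper itself uses later, in the proof of Theorem \ref{thm_WignerVille}), you swap the integrand for its limit $A(t_1,\cdot)\overline{A(t_2,\cdot)}$ at a vanishing $L^1$ cost and dispatch the remaining oscillatory integral with the Riemann--Lebesgue lemma, since $t_1\neq t_2$ forces the frequency $N(t_1-t_2)$ to infinity. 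The paper instead works entirely in the time domain: it writes the covariance as $\Sigma_L\int_\R g_N^0(Nt_2,Nt_2-u)\,g_N^0(N(t_2+h),N(t_2+h)-u)\,du$, approximates the limiting kernels by elementary functions of bounded support (via \cite[Prop.~6.8]{R1988}), controls the approximation errors by Cauchy--Schwarz exactly as you do, and then observes that the two compactly supported bumps are translated apart by $Nh\to\infty$, so their product integrates to zero. The two arguments are morally dual --- Riemann--Lebesgue is itself typically proved by the same step-function approximation the paper carries out by hand --- but yours is somewhat more compact because the lemma packages the density argument, while the paper's version makes the underlying mechanism (the kernels eventually live on disjoint stretches of the L\'evy process) explicit, which is the intuition the authors state before their proof. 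Your flagged delicate point, the uniform-in-$N$ bound on $\norm{A_N^0(Nt_1,\cdot)}_{L^2}$, is handled correctly (a convergent sequence in a normed space is bounded), and the paper needs and obtains the analogous bound $\norm{g_N^0(N\cdot,\cdot)}_{L^2}\leq \eta+\norm{g(\cdot,\cdot)}_{L^2}$ in the same way.
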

The intuition behind this proposition is that the kernel functions $g_N^0(Nt,Nt-\cdot)$ are square integrable and therefore roughly vanish if the second argument tends to infinity. For $t_1\neq t_2$ the difference between $Nt_1$ and $Nt_2$ increases for $N\rightarrow\infty$. Therefore, for large $N$, the bulks of the kernels for $t_1$ and $t_2$ rest on far apart segments of the L\'evy process, which has independent increments.
\begin{proof} 	
Let $Y_N(t) = \int_{\R} g_N^0(Nt,Nt-u) L(du)$ be a sequence of locally stationary processes. Without loss of generality we assume that $t_1>t_2$ and set $h=t_1-t_2>0$. It is sufficient to show that for all $t_1\in\R$ and $\varepsilon>0$ there exists an $N_0\in\N$ such that for all $N>N_0$ 
\begin{align*}
\abs{Cov(Y_N(t_2),Y_N(t_2+h))} =\Sigma_L \abs{ \int_\R g_N^0(Nt_2,Nt_2-u) g_N^0(N(t_2+h),N(t_2+h)-u) du} <\varepsilon.
\end{align*}
Let $t\in\R$ and define $\mathcal{E}$ as the set of all elementary real functions in $L^2(\R)$, i.e.
\begin{align*}
\mathcal{E}=\left\{ f\in L^2(\R): f=\sum_{i=1}^n c_i  \mathbb{1}_{[a_i,b_i)}, n\in\N, c_i\in\R, -\infty<a_i<b_i<\infty, i=1,\ldots,n \right\}.
\end{align*}
Then for all $\eta>0$ there exists $N_1\in\N$ and elementary functions $\hat{g}(t_2,\cdot),\hat{g}(t_2+h,\cdot)\in\mathcal{E}$ such that
\begin{align*}
\norm{g(t_2,\cdot)-\hat{g}(t_2,\cdot)}_{L^2}&<\eta,\qquad & \qquad  \norm{g(t_2+h,\cdot)-\hat{g}(t_2+h,\cdot)}_{L^2}&<\eta,\\
\norm{g_N^0(Nt_2,\cdot)-g(t_2,\cdot)}_{L^2}&<\eta\text{ and}\qquad & \qquad \norm{g_N^0(N(t_2+h),\cdot)-g((t_2+h),\cdot)}_{L^2}&<\eta,
\end{align*} 
using \cite[Prop. 6.8]{R1988}. For the remainder of the proof, it will be assumed that $N>N_1$. Thus,
\begin{align*}	
\norm{g_N^0(N(t_2+h),\cdot)}_{L^2} \leq \eta + \norm{g(t_2+h,\cdot)}_{L^2}\text{ and}\qquad \norm{g_N^0(N(t_2),\cdot)}_{L^2} \leq \eta + \norm{g(t_2,\cdot)}_{L^2}.
\end{align*}
We define the constant $K= \eta+\max\left\{\norm{g(t_2,\cdot)}_{L^2} , \norm{g(t_2+h,\cdot)}_{L^2} \right\}<\infty$. Then, using the triangle and Cauchy-Schwartz's inequality shows
\begin{align*}
|Cov(Y_N(t_2),Y_N(t_2+h))| &= \Sigma_L \abs{\int_\R g_N^0(Nt_2,Nt_2-u) g_N^0(N(t_2+h),N(t_2+h)-u) du} \\
&\leq \Sigma_L \Big( \|g_N^0(Nt_2,\cdot)-g(t_2,\cdot)\|_{L^2} \|g_N^0(N(t_2+h),\cdot)\|_{L^2} \\
&\quad+ \|g(t_2,\cdot)\|_{L^2} \|g_N^0(N(t_2+h),\cdot)-g(t_2+h,\cdot)\|_{L^2} \\
&\quad+ \|g(t_2,\cdot)-\hat{g}(t_2,\cdot)\|_{L^2} \|g(t_2+h,\cdot)\|_{L^2} \\
&\quad+ \|\hat{g}(t_2,\cdot)\|_{L^2} \|g(t_2+h,\cdot)-\hat{g}(t_2+h,\cdot)\|_{L^2} \\
&\quad+ \int_\R |\hat{g}(t_2,Nt_2-u)| ~ |\hat{g}(t_2+h,N(t_2+h)-u)| du \Big) \\
&\leq \Sigma_L \Big( 4\eta K+ \int_\R |\hat{g}(t_2,Nt_2-u)| ~ |\hat{g}(t_2+h,N(t_2+h)-u)| du \Big),
\end{align*}
where the last integral tends to zero for $N\rightarrow\infty$ by using the dominated convergence theorem and noting that the elementary functions $\hat{g}$ have bounded support.
\end{proof}

\section{Classes of locally stationary processes in continuous-time}
\label{sec4}
In this section, we consider sequences of time-varying CARMA processes, for which we derive sufficient conditions for local stationarity.
\subsection{Locally stationary CAR(1) processes}
\label{sec4-1}
The simplest L\'evy-driven CARMA process is the L\'evy-driven CAR(1) or Ornstein-Uhlenbeck type process. \\
For a constant coefficient $a>0$ a CAR(1) process is the stationary solution to the stochastic differential equation $dY(t)=-aY(t)dt+L(dt)$,
which can be expressed as
\begin{align*}
Y(t)=\int_{-\infty}^t e^{-a (t-u)} L(du).
\end{align*}
We replace the constant $a$ by a time-varying function $a(t)$ and arrive at a so called time-varying CAR(1) process, which is given by
\begin{align*}
Y(t)=\int_{-\infty}^t e^{-\int_u^t a(s) ds} L(du).
\end{align*}
Additional rescaling results in a sequence of time-varying CAR(1) processes that could be locally stationary. We consider the sequence of stochastic processes $\{Y_N(t), t\in\R\}_{N\in\N}$ defined by
\begin{align}\label{locstat_CAR1}
		Y_N(t) 	&= \int_{-\infty}^{Nt} e^{-\int_u^{Nt} a(\frac{s}{N}) ds} L(du),
\end{align}
where $a:\R\rightarrow\R$ is a continuous coefficient function such that $u\mapsto e^{-\int_u^{Nt} a(\frac{s}{N}) ds}$ is square integrable for all $t\in\R$, $N\in\N$ and $L$ is a two-sided L\'evy process. Recall that the L\'evy process satisfies (\ref{ass_L}).
In view of Definition \ref{def_locstat_cont}, we obtain from (\ref{locstat_CAR1}) that				
\begin{align}
\begin{aligned}\label{equation:substitutionsNt}
g_N^0(Nt,Nt-u) 	&=  \mathbb{1}_{\{Nt-u\geq 0\}} e^{-\int_u^{Nt} a(\frac{s}{N}) ds} &&= \mathbb{1}_{\{Nt-u\geq 0\}} e^{-\int_{-(Nt-u)}^0 a(\frac{s}{N}+t) ds}\text{ and} \\
A_N^0(Nt,\mu)	&= \int_\R e^{-i\mu u} g_N^0(Nt,u) du &&= \int_\R e^{-i\mu v} \mathbb{1}_{\{v\geq 0\}} e^{-\int_{-v}^0 a(\frac{s}{N}+t) ds} dv.
\end{aligned}
\end{align}

\begin{Proposition} \label{prop_C1_C2_CAR1}
Let $\{Y_N(t), t\in\R\}_{N\in\N}$ be a sequence of time-varying CAR(1) processes as defined in (\ref{locstat_CAR1}). If 
\begin{enumerate}
\item[$(C1)$] $a(\cdot)$ is continuous and
\item[$(C2)$] for every $T\in\mathbb{R}^+$ there exists $\varepsilon_T>0$ such that $a(s)\geq \varepsilon_T$ for all $s\leq T$, 
\end{enumerate}
then $Y_N(t)$ is locally stationary, where the limiting kernel $g$ and limiting transfer function are given by
\begin{align*}
g(t,u)	=\mathbb{1}_{\{u\geq 0\}}e^{-a(t)u}\quad \text{ and}\qquad
A(t,\mu)=\int_\R e^{-i\mu u} \mathbb{1}_{\{u\geq 0\}}e^{-a(t)u} du.
\end{align*}
\end{Proposition}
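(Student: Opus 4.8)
The plan is to verify Definition \ref{def_locstat_cont} directly for the kernels $g_N^0$ and then invoke the equivalence of the time- and frequency-domain definitions to obtain the stated transfer function. Using the representation already recorded in (\ref{equation:substitutionsNt}), writing $v=Nt-u$ gives
\begin{align*}
g_N^0(Nt,v)=\mathbb{1}_{\{v\geq0\}}\,e^{-\int_{-v}^0 a(\frac{s}{N}+t)\,ds},
\end{align*}
so the candidate limit is $g(t,v)=\mathbb{1}_{\{v\geq0\}}e^{-a(t)v}$, matching the claim.

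The crucial step is an exponential bound uniform in $N$. Fix $t\in\R$ and choose $T>\max\{t,0\}$. Since $s\in[-v,0]$ forces $\tfrac{s}{N}+t\leq t<T$, condition $(C2)$ yields $a(\tfrac{s}{N}+t)\geq\varepsilon_T>0$, whence
\begin{align*}
0\leq g_N^0(Nt,v)\leq e^{-\varepsilon_T v}\qquad\text{for all }v\geq0,\ N\in\N.
\end{align*}
This gives condition (a) of Definition \ref{def_locstat_cont}, i.e. $g_N^0(Nt,\cdot)\in L^2(\R)$, and simultaneously supplies the dominating function $\mathbb{1}_{\{v\geq0\}}e^{-\varepsilon_T v}\in L^2(\R)$ needed below; the same bound applies to $g(t,\cdot)$ since $a(t)\geq\varepsilon_T$.

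Next I establish pointwise convergence. For fixed $v\geq0$, continuity of $a$ in $(C1)$ gives $a(\tfrac{s}{N}+t)\to a(t)$ for every $s$, and the integrands are bounded by $\sup_{[t-v,t]}\abs{a}$ uniformly in $N\geq1$ on the compact interval $[-v,0]$, so dominated convergence yields $\int_{-v}^0 a(\tfrac{s}{N}+t)\,ds\to a(t)v$ and hence $g_N^0(Nt,v)\to g(t,v)$. Combining this with the uniform dominating function $2\,\mathbb{1}_{\{v\geq0\}}e^{-\varepsilon_T v}$ for the difference $\abs{g_N^0(Nt,v)-g(t,v)}$, a second application of dominated convergence gives $\Lnorm{g_N^0(Nt,\cdot)-g(t,\cdot)}\to0$, which is condition (b).

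It remains to check that $t\mapsto g(t,\cdot)$ is continuous into $L^2(\R)$, for which I apply Lemma \ref{lem:L2cont} to $g$: the map $t\mapsto g(t,v)$ is continuous for each fixed $v$ by $(C1)$, and for $s\in[t-\epsilon_t,t+\epsilon_t]$ choosing $T>\max\{t+\epsilon_t,0\}$ gives $a(s)\geq\varepsilon_T$ and thus $\abs{g(s,v)}\leq\mathbb{1}_{\{v\geq0\}}e^{-\varepsilon_T v}\in L^2(\R)$. The main obstacle throughout is exactly this single exponential bound valid for all $N$ at once: the key observation is that in $\int_{-v}^0 a(\tfrac{s}{N}+t)\,ds$ the argument $\tfrac{s}{N}+t$ never exceeds $t$, so a lower bound for $a$ on a half-line $(-\infty,T]$, precisely as guaranteed by $(C2)$, controls every member of the sequence simultaneously. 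Finally, the equivalence of Definitions \ref{def_locstat_cont} and \ref{def_locstat_cont2} identifies the limiting transfer function as $A(t,\mu)=\int_\R e^{-i\mu u}g(t,u)\,du$, the stated expression.
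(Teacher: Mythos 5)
Your proof is correct and follows essentially the same route as the paper: the same pointwise convergence of the inner integral $\int_{-v}^0 a(\tfrac{s}{N}+t)\,ds\to a(t)v$ via continuity of $a$ on compact sets, the same exponential majorant obtained from $(C2)$ (the paper uses $4\,\mathbb{1}_{\{u\leq0\}}e^{2\varepsilon_t u}$, which is your bound after the substitution $v=-u$), and the same appeal to Lemma \ref{lem:L2cont} for the $L^2$-continuity of $t\mapsto g(t,\cdot)$. Your write-up is in fact slightly more explicit than the paper's about why the argument $\tfrac{s}{N}+t$ stays below a fixed $T$ uniformly in $N$, which is the point that makes $(C2)$ usable.
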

\begin{proof}
For all $t\in\R$ it holds
\begin{align*}
\norm{g_N^0(Nt,\cdot)-g(t,\cdot)}_{L^2}^2 &=\norm{g_N^0(Nt,Nt-\cdot)-g(t,Nt-\cdot)}_{L^2}^2 \\
&=\int_\R \abs{ \mathbb{1}_{\{Nt-u\geq 0\}} e^{-\int_{-(Nt-u)}^0 a(\frac{s}{N}+t) ds} - \mathbb{1}_{\{Nt-u\geq 0\}}e^{-a(t)(Nt-u)} }^2 du \\
&= \int_\R \mathbb{1}_{\{u\leq 0\}} \abs{ e^{-\int_u^0 a(\frac{s}{N}+t) ds} - e^{a(t)u}}^2 du\underset{N\rightarrow\infty}{\longrightarrow}0,
\end{align*}
using the dominated convergence theorem. For the inner integral the continuity of $a$ on a compact set is sufficient for an application of the dominated convergence theorem. As majorant for the outer integal we consider $u\mapsto4\mathbb{1}_{\{u\leq 0\}} e^{2\varepsilon_t u}\in L^2$. The demanded $L^2$-continuity of the limiting kernel $g$ can be obtained similarly, using Lemma \ref{lem:L2cont}. 
\end{proof}

\begin{Remark} Condition $(C1)$ is intrinsically related to the continuity of the limiting kernel demanded in the definition of local stationarity. $(C2)$ is obviously satisfied if $a(\cdot)$ is bounded away from zero. However, as time goes to infinity $a(\cdot)$ may go to zero arbitrarily fast. The latter is clearly connected to the fact that our time-varying CAR processes are causal by definition. It may be possible to weaken $(C2)$ and allow $a(\cdot)$ also to approach $0$ as time goes to minus infinity. Then, the convergence to zero must be slow enough for all integrals to exist in $L^2$. Carrying this out in detail appears rather intricate and not of relevance for the applications of locally stationary CAR(1) processes.
\end{Remark}

\subsection{Time-varying CARMA(p,q) processes and time-varying state space models}
\label{sec4-2}
Consider $p,q\in\N$, where $p>q$. The formal differential equation for a time-varying L\'evy-driven CARMA($p,q$) process is given by
\begin{align*}
p(t,D)Y(t) &= q(t,D)DL(t),\text{ i.e.} \\
D^p Y(t) + a_1(t) D^{p-1}Y(t) + \ldots +a_p(t) Y(t) &= b_0(t) DL(t) + b_1(t) D^2L(t) +\ldots +b_q(t) D^{q+1}L(t),
\end{align*}
where $D$ denotes the differential operator with respect to time and $L(t)$ is a two sided L\'evy process satisfying (\ref{ass_L}). For continuous functions $a_i(t),b_i(t)$, $i=1,\ldots,p$, where $b_i(t)=0$ for all $i>q$, the polynomials
\begin{align}\label{eq_PQ_tv}
\begin{aligned}
		p(t,z) &= z^p+a_1(t)z^{p-1}+\ldots+a_{p-1}(t)z+a_p(t)\text{ and} \\
		q(t,z) &= b_0(t)+b_1(t)z+\ldots+b_{q-1}(t)z^{q-1}+b_q(t)z^q
\end{aligned}		
\end{align}
are called autoregressive (AR) and moving average (MA) polynomials. For a rigorous definition we interpret the differential equations to be equivalent to the state space representation 
\begin{align}\label{eq_statespace_tilde}
\begin{aligned}
Y(t)	&= \mathcal{B}(t)' \mathcal{X}(t), \text{ and}\\ 
d\mathcal{X}(t) &= \mathcal{A}(t) \mathcal{X}(t) dt + \mathcal{C} L(dt), 		\quad t\in\R  
\end{aligned}
\end{align}
with
\begin{align*}
\mathcal{A}(t)&=\left( \begin{array}{cccc}
0 & 1 & \ldots & 0 \\
\vdots & ~ & \ddots & \vdots \\
0 & ~ & ~ & 1 \\
-a_p(t) & -a_{p-1}(t) & \ldots & -a_1(t) \\
\end{array}\right)\in M_{p\times p}(\mathcal{R}[t])\text{ and}\\
\mathcal{B}(t)&=\left( \begin{array}{c}
b_0(t)  \\
 b_1(t)\\
\vdots \\
b_{p-1}(t)\\
\end{array}\right)\in M_{p\times 1}(\mathcal{R}[t]), \qquad 
\mathcal{C}=\left( \begin{array}{c}
0  \\
\vdots\\
0 \\
1\\
\end{array}\right)\in M_{p\times 1}(\R),
\end{align*}
where $\mathcal{R}[t]$ denotes the ring of continuous functions in $t$ from $\R$ to $\R$.

\noindent It is obvious that \eqref{eq_statespace_tilde} has a unique solution when one fixes the value $X(t_0)$ at some point $t_0\in\R$. For a Brownian motion as driving noise such equations were investigated in \cite[Section 2.1.1.]{S2010}.\\
Provided the integrals exist in $L^2$, it can be shown that a solution is given by
\begin{align}\label{sol_tvCARMA}
\mathcal{X}(t) = \int_{-\infty}^t \Psi(t,s) \mathcal{C} L(ds) \quad \text{and} \quad Y(t) = \mathcal{B}(t)' \int_{-\infty}^t \Psi(t,s) \mathcal{C} L(ds),
\end{align}
 where $\Psi(t,t_0)$ is the unique matrix solution of the homogeneous initial value problem (IVP) $\frac{d}{dt}\Psi(t,t_0)=\mathcal{A}(t)\Psi(t,t_0)$, where $\Psi(t_0,t_0)=\mathbf{1}_p$ for all $t>t_0$ (see \cite[Section 3 and 4]{B1970}). The transition matrix $\Psi$ satisfies $\Psi(t,t_0)=\Psi(t,u)\Psi(u,t_0)$ for all $t>u>t_0$ (see \cite[Section 4, Theorem 2]{B1970}). In particular, the integrals in (\ref{sol_tvCARMA}) are well-defined (see Section \ref{sec2-2}), if there exist $\gamma,\lambda>0$, such that
\begin{align*}
\norm{ \Psi(t,t_0)} \leq \gamma e^{-\lambda(t-t_0)} 
\qquad \text{ for all } t,t_0 \text{ with } t\geq t_0.
\end{align*}
This condition corresponds to uniform exponential stability of the state space model in (\ref{eq_statespace_tilde}) and will be explained more detailed in Section \ref{sec4-3}.\\
The usual integral representation of stationary causal CARMA processes motivates the following definition.
\begin{Definition}\label{def_tvCARMA}
A solution $\{Y(t)\}_{t\in\R}$ of the observation and state equations (\ref{eq_statespace_tilde}) in the form \eqref{sol_tvCARMA} is called a time-varying L\'evy-driven CARMA(p,q) process (tvCARMA(p,q)). 
\end{Definition}

\noindent For some initial time $t_0\in\R$ the process satisfies the relation (see \cite[Section~2.1.1.]{S2010})
\begin{align}\label{sol_tvCARMA_t0}
\mathcal{X}(t) = \Psi(t,t_0)\left(\mathcal{X}(t_0)+\int_{t_0}^t \Psi(u,t_0)^{-1} \mathcal{C} L(du)\right).
\end{align}
From  \cite[Remark 2]{BS2011b} it follows that if for all $t,t_0\in\R$ and $t>t_0$
\begin{align}\label{ass_commutative}
\mathcal{A}(t)\int_{t_0}^t \mathcal{A}(s)ds = \int_{t_0}^t \mathcal{A}(s)ds \mathcal{A}(t),
\end{align}
then $\Psi(t,t_0)=e^{\int_{t_0}^t \mathcal{A}(s)ds}$. \\
If assumption (\ref{ass_commutative}) does not hold, $\Psi(t,t_0)$ can be expressed by the Peano-Baker series (see \cite[Section 2]{BS2011b})
\begin{align*}
\Psi(t,t_0) 	&= \mathbf{1}_p + \int_{t_0}^t \mathcal{A}(\tau_1) d\tau_1 + \int_{t_0}^t \mathcal{A}(\tau_1) \int_{t_0}^{\tau_1} \mathcal{A}(\tau_2) d\tau_2 d\tau_1 + \ldots = \sum_{n=0}^\infty \mathcal{I}_n(t),
\end{align*}
where $\mathcal{I}_0(t)= \mathbf{1}_p$ and $\mathcal{I}_n(t)= \int_{t_0}^t \mathcal{A}(\tau_1) \int_{t_0}^{\tau_1} \mathcal{A}(\tau_2) \cdots \int_{t_0}^{\tau_{n-1}} \mathcal{A}(\tau_n) d\tau_n \ldots d\tau_2 d\tau_1$. \\
If the commutativity assumption (\ref{ass_commutative}) holds, the equations (\ref{sol_tvCARMA}) and (\ref{sol_tvCARMA_t0}) simplify to
\begin{align*}
\mathcal{X}(t) 	&= e^{\int_{t_0}^t \mathcal{A}(s)ds}\mathcal{X}(t_0)+\int_{t_0}^t e^{\int_u^t \mathcal{A}(s)ds} \mathcal{C} L(du) &&= \int_{-\infty}^t e^{\int_u^t \mathcal{A}(s)ds} \mathcal{C} L(du)\text{ and}  \\
Y(t) 	&= \mathcal{B}(t)' e^{\int_{t_0}^t \mathcal{A}(s)ds}\mathcal{X}(t_0)+\int_{t_0}^t \mathcal{B}(t)' e^{\int_u^t \mathcal{A}(s)ds} \mathcal{C} L(du)&&= \int_{-\infty}^t \mathcal{B}(t)' e^{\int_u^t \mathcal{A}(s)ds} \mathcal{C} L(du)
\end{align*}
for $t,t_0\in\R$, where $t>t_0$.

\begin{Remark}\label{rem_commutative}
If $\mathcal{A}(s)$ and $\mathcal{A}(t)$ commute, i.e. $[\mathcal{A}(s),\mathcal{A}(t)]=0$ for all $s,t\in\R$, then the commutativity assumption (\ref{ass_commutative}) holds. However, the matrices $\mathcal{A}(t)$, $t\in\R$ are in companion form and are not in general commutative (see also Proposition \ref{Proposition:commutativitycompanion}). For further insight into the commutativity of some matrices $\mathcal{A}(t)$ and $\int_{t_0}^t \mathcal{A}(s)ds$ as well as $\mathcal{A}(s)$ and $\mathcal{A}(t)$, we refer to \cite[Exercise 4.8]{R1996} and \cite{WS1976}.
\end{Remark}

The previous remark shows that, when considering time-varying CARMA(p,q) processes, it is in general not possible to describe the solution of the state space equations explicitly in form of a matrix exponential. Instead one has to use the Peano-Baker series.\\
In \cite[Corollary 3.4]{SS2012} it is proved that, in the time-invariant case, the class of CARMA processes is equivalent to the class of continuous-time state space models. This motivates looking at time-varying state space processes. We consider the observation and state equations 
\begin{align}\label{eq_statespace_notilde}
\begin{aligned}
Y(t)	&= B(t)' X(t) \text{ and} \\
dX(t) &= A(t)X(t)dt + C(t) L(dt),
\end{aligned}
\end{align}
where $t\in\R$, $A(t)\in M_{p\times p}(\mathcal{R}[t])$ and $B(t),C(t)\in M_{p\times1}(\mathcal{R}[t])$ are arbitrary continuous coefficient functions and $L$ is a two-sided L\'evy process satisfying (\ref{ass_L}).\\
Now, the representation of a time-varying CARMA processes as given in (\ref{sol_tvCARMA}) can be adapted to (general) state space processes. Provided the integrals exist in $L^2$, it can be shown that a solution of (\ref{eq_statespace_notilde}) is given by
\begin{align}\label{sol_tvStateSpace}
X(t) = \int_{-\infty}^t \Psi(t,u) C(u) L(du) \quad \text{and} \quad Y(t) = B(t) \int_{-\infty}^t \Psi(t,u) C(u) L(du),
\end{align}
where $\Psi(t,t_0)$ is the unique matrix solution of the IVP $\frac{d}{dt}\Psi(t,t_0)=A(t)\Psi(t,t_0)$, $\Psi(t_0,t_0)=\mathbf{1}_p$ for $t>t_0$. In particular, in the representation of (\ref{sol_tvStateSpace}), the integrals are well-defined, if $C(\cdot)\in L^2(\R)$ and there exist $\gamma,\lambda>0$, such that
\begin{align*}
\norm{\Psi(t,t_0)} \leq \gamma e^{-\lambda(t-t_0)} 
\qquad \text{for all }t\geq t_0 \text{ (uniform exponential stability)}.
\end{align*}
For some initial time $t_0\in\R$ the process satisfies the relation
\begin{align}\label{sol_tvStateSpace_t0}
X(t) = \Psi(t,t_0)\left(X(t_0)+\int_{t_0}^t \Psi(u,t_0)^{-1} C(u) L(du)\right).
\end{align}
Finally, we define

\begin{Definition} \label{def_tvStateSpace}
A solution $\{Y(t)\}_{t\in\R}$ of the observation and state equations (\ref{eq_statespace_notilde}) in the form (\ref{sol_tvStateSpace}) is called a time-varying L\'evy-driven state space process.
\end{Definition}

The natural question arises, whether all time-varying state space processes are tvCARMA processes, as in the time-invariant case. A comprehensive investigation of this question seems beyond the scope of this work. Below we present a result indicating that this is probably not the case in general (definitely not when allowing the coefficient functions to have a discontinuity). Moreover, we give sufficient conditions for a positive answer.

\begin{Proposition}
The class of time-varying L\'evy-driven CARMA models (\ref{eq_statespace_tilde}) and time-varying L\'evy-driven state space models (\ref{eq_statespace_notilde}) with not necessarily continuous coefficient functions do not coincide in general.
\end{Proposition}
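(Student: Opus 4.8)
The plan is to produce a single time-varying state space process that admits no tvCARMA representation whatsoever, by isolating a structural invariant — continuity of the integral kernel in its backward time variable — that every tvCARMA kernel possesses but a suitably chosen state space kernel does not. The starting point is identifiability of the kernel: every process in either class has the form $Y(t)=\int_{-\infty}^t g(t,u)\,L(du)$ with $g(t,\cdot)\in L^2(\R)$, and the $L^2$-isometry of the stochastic integral (the same one used in the proof of Proposition \ref{prop_2___AsymptoticUncorrelated}) gives $Var(Y_1(t)-Y_2(t))=\Sigma_L\int_\R|g_1(t,u)-g_2(t,u)|^2\,du$. Hence if two representations define the same process, their kernels agree in $L^2(\R)$ for every $t$, so comparing processes reduces to comparing kernels up to almost-everywhere equality.

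Next I would record the regularity of CARMA kernels. For a tvCARMA process, (\ref{sol_tvCARMA}) gives $g(t,u)=\mathcal{B}(t)'\Psi(t,u)\mathcal{C}$, and for fixed $t$ the transition matrix solves the backward equation $\tfrac{\partial}{\partial u}\Psi(t,u)=-\Psi(t,u)\mathcal{A}(u)$ with $\Psi(t,t)=\mathbf{1}_p$. Any well-defined tvCARMA process requires $\mathcal{A}$ to be at least locally integrable, so by Carathéodory theory $u\mapsto\Psi(t,u)$ is absolutely continuous, hence continuous, even if $\mathcal{A},\mathcal{B},\mathcal{C}$ are discontinuous in $t$. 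Since $\mathcal{B}(t)'$ and $\mathcal{C}$ are constant in $u$, the map $u\mapsto g(t,u)$ is continuous for every fixed $t$ and every order $p,q$.

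Finally I would exhibit the counterexample in dimension $p=1$: take $A(t)\equiv-1$, $B(t)\equiv1$ and the bounded discontinuous $C(u)=\mathbb{1}_{\{u<0\}}+2\,\mathbb{1}_{\{u\geq0\}}$. Then $\Psi(t,u)=e^{-(t-u)}$ gives uniform exponential stability, boundedness of $C$ places $g(t,\cdot)=e^{-(t-\cdot)}C(\cdot)\mathbb{1}_{\{\cdot\le t\}}$ in $L^2(\R)$, and (\ref{sol_tvStateSpace}) well-defines the state space process $Y(t)=\int_{-\infty}^t e^{-(t-u)}C(u)\,L(du)$. For any fixed $t\ge0$ the kernel $u\mapsto g(t,u)$ inherits the jump of $C$ at $u=0$, with distinct one-sided limits $e^{-t}$ and $2e^{-t}$; such a genuinely jumping function is not almost-everywhere equal to any continuous function, since two continuous functions equal a.e. are equal everywhere, which would force the one-sided limits to coincide. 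By the identifiability and regularity steps, no tvCARMA kernel can agree a.e. with this one, so $Y$ is not a tvCARMA process of any order, which establishes non-coincidence.

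The main obstacle is conceptual rather than computational: choosing continuity in the backward variable as the invariant that survives arbitrary discontinuities of the coefficients and settles all CARMA orders simultaneously. The only genuinely technical point is the continuity of $u\mapsto\Psi(t,u)$ under merely integrable $\mathcal{A}$, which I would dispatch via the standard absolute-continuity of transition matrices; the isometry, the $L^2$-membership of the example, and the jump argument are all routine.
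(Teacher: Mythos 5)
Your proof is correct, but it runs along a genuinely different track from the paper's. The paper builds a two-dimensional state space model with a structural break at $t=1$, assumes an equivalent tvCARMA representation exists, and locates the obstruction not in the moving-average kernel (the transfer functions of the two post-break systems are arranged to agree) but in the term through which the pre-break state $X(1)$ enters the post-break dynamics: with Brownian driving noise and a controllable pair $(A_1,C_1)$ the state $X(1)$ is nondegenerate Gaussian, so the required a.s.\ identity $B_2'e^{A_2(t-1)}x=\mathcal{B}_2'e^{\mathcal{A}_2(t-1)}x$ must hold on a set of positive Lebesgue measure, and explicit matrices are exhibited for which it fails on an open set. You instead isolate a structural invariant of the kernel itself: since $\mathcal{C}=(0,\dots,0,1)'$ is constant and $u\mapsto\Psi(t,u)$ is (absolutely) continuous for any locally integrable $\mathcal{A}$, every tvCARMA kernel is continuous in the backward variable on $(-\infty,t)$, whereas a one-dimensional state space model with a jumping $C(\cdot)$ produces a kernel with distinct one-sided limits at an interior point; the $L^2$-isometry then identifies kernels up to a.e.\ equality and yields the contradiction. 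Your route is more elementary (dimension one, no matrix exponentials, no nondegeneracy argument for $\Sigma_X^1$, no reliance on the Gaussian case) and it rules out tvCARMA representations of \emph{every} order at once; the paper's example is in a sense stronger evidence, because it shows the failure persists even when the discontinuity sits in $A$ and $B$ and the local transfer functions of the two systems coincide, i.e.\ the obstruction is not merely the rigidity of the constant input vector $\mathcal{C}$ that your example exploits. One trivial slip: at $t=0$ the jump of $C$ at $u=0$ coincides with the endpoint $u=t$ of the integration region, where a tvCARMA kernel may itself jump, so you should take $t>0$ (e.g.\ $t=1$); this does not affect the argument.
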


\begin{proof}
Consider a two dimensional time-varying state space model as defined in (\ref{eq_statespace_notilde}) with a structural break at $t=1$. As coefficient functions we consider the step functions
\begin{align}\label{equation:coefficientfunctionstimevarying1}
B(t)=\begin{cases} B_1 &\text{if~} t\leq1 \\ B_2 & \text{if~} t>1 \end{cases}, \quad
A(t)=\begin{cases} A_1 &\text{if~} t\leq1 \\ A_2 & \text{if~} t>1 \end{cases}\text{ and} \quad
C(t)=\begin{cases} C_1 &\text{if~} t\leq1 \\ C_2 & \text{if~} t>1 \end{cases},
\end{align}
which satisfy the uniform exponential stability assumption for the solution of (\ref{eq_statespace_notilde}).\\
We assume that the system is in the form of a CARMA process for $t\leq1$ and assume (for contradiction) that there exists an equivalent CARMA model as defined in (\ref{eq_statespace_tilde}) for all $t\in\R$. Then, the CARMA model shows the same structural resemblance as the corresponding state space model. In the following we denote the coefficients of the CARMA model by $\mathcal{B}(t),\mathcal{A}(t)$ and $\mathcal{C}(t)$. Using the same notation as in (\ref{equation:coefficientfunctionstimevarying1}) we obtain
\begin{align}
\begin{aligned}	
&\mathcal{B}_1  	= B_1, \qquad
&&\mathcal{A}_1 	= A_1, \qquad
&&\mathcal{C}_1  = C_1, \\
&\mathcal{B}_2 	= \begin{pmatrix} \ast \\ \ast \end{pmatrix}, \qquad
&&\mathcal{A}_2 	= \begin{pmatrix} 0 & 1 \\ \ast & \ast \end{pmatrix}\quad \text{ and}\quad
&&\mathcal{C}_2 	= \begin{pmatrix} 0 \\ 1 \end{pmatrix}.
\label{eq_StructureBreak}
\end{aligned}
\end{align}
Since the structural break divides the model in two separate linear models, the CARMA representations $(\mathcal{B}_1,\mathcal{A}_1,\mathcal{C}_1)$ and $(\mathcal{B}_2,\mathcal{A}_2,\mathcal{C}_2)$ are unique. From the proof of \cite[Theorem 3.3]{SS2012} we obtain that $\mathcal{B}_2 e^{\mathcal{A}_2 (t-u)} \mathcal{C}_2 = B_2 e^{A_2 (t-u)} C_2$ for all $t>1$ and $u\in(1,t]$.\\
On the one hand, we have 
\begin{align}
\begin{aligned}\label{equation:representationY1}
Y(t)	&= 	\mathbb{1}_{\{t\leq1\}} \left( \int_{-\infty}^t B(t)' \Psi_A(t,u) C(u) L(du) \right) \\
	&\quad+	\mathbb{1}_{\{t>1\}} \left( B(t)' \Psi_A(t,1) X(1) + B(t)' \Psi_A(t,1) \int_1^t \Psi_A(u,1)^{-1} C(u) L(du) \right) \\
	&= 	\mathbb{1}_{\{t\leq1\}} \left( \int_{-\infty}^t B_1' e^{A_1(t-u)} C_1 L(du) \right)\\
	&\quad+	\mathbb{1}_{\{t>1\}} \left( B_2' e^{A_2(t-1)} X(1) + \int_1^t B_2' e^{A_2(t-u)} C_2 L(du) \right),
\end{aligned}
\end{align}
where $\Psi_A(s,s_0)$ denotes the solution of the aforementioned IVP with respect to $A$. On the other hand, $Y(t)$ can be written as
\begin{align}
\begin{aligned}\label{equation:representationY2}
Y(t)&= \mathbb{1}_{\{t\leq1\}} \left( \int_{-\infty}^t \mathcal{B}_1' e^{\mathcal{A}_1(t-u)} \mathcal{C}_1 L(du) \right) \\
&\quad+\mathbb{1}_{\{t>1\}} \left( \mathcal{B}_2' e^{\mathcal{A}_2(t-1)} \mathcal{X}(1) + \int_1^t \mathcal{B}_2' e^{\mathcal{A}_2(t-u)} \mathcal{C}_2 L(du) \right). 
\end{aligned}
\end{align}
From (\ref{eq_StructureBreak}) it follows that
\begin{align*}
X(1)	&= \int_{-\infty}^1 e^{A_1(1-u)} C_1 L(du) =\int_{-\infty}^1 e^{\mathcal{A}_1(1-u)} \mathcal{C}_1 L(du)=\mathcal{X}(1).
\end{align*}
Thus, combining (\ref{equation:representationY1}) and (\ref{equation:representationY2}), using (\ref{eq_StructureBreak}) and the independent increments of the L\'evy process, the equality $B_2' e^{A_2(t-1)} X(1)=\mathcal{B}_2' e^{\mathcal{A}_2(t-1)} X(1)$ has to hold almost surely for all $t>1$. Therefore, for almost all $x$ in the support of $X(1)$ we obtain
\begin{align}\label{eq_proof_ToContradict}
B_2' e^{A_2(t-1)} x=\mathcal{B}_2' e^{\mathcal{A}_2(t-1)} x.
\end{align}
In the sequel we give a particular L\'evy process and coefficient functions that lead to a contradiction in (\ref{eq_proof_ToContradict}).\\
Assume that the L\'evy process is a Brownian motion. Thus, it has the triplet $(0,\Sigma,0)$ for some $\Sigma>0$. From \cite{S2014} it follows that $X(t)$ is a L\'evy process with triplet $(0,\Sigma_X^t,0)$, where $\Sigma_X^1= \int_{-\infty}^1 e^{A_1(1-u)} C_1 \Sigma C_1' e^{A_1'(1-u)} du = \Sigma \int_0^\infty e^{A_1 u} C_1 C_1' e^{A_1' u} du \in M_{2\times 2}(\R)$. 
The regularity of $\Sigma_X^1$ can be shown by investigating $Im(\Sigma_X^1)$, where $Im(D)=\{Dx: x\in\R^d\}$ denotes the image of a matrix $D\in M_{d\times d}(\R)$. Using \cite[Lemma 12.6.2]{B2009} (see also \cite[p. 54]{SS2012}) we obtain 
\begin{align*}
Im\left(\int_0^\infty e^{A_1 u} C_1 C_1' e^{A_1' u} du\right) = Im\left([C_1\ \ A_1 C_1\ \ \cdots\ \ A_1^{p-1}C_1]\right).
\end{align*}
Therefore, in our setting, it is sufficient to find $A_1,C_1$ such that $[C_1 \ A_1 C_1]$ is regular, which also implies that $\Sigma_X^1$ is positive definite. Then, $X(1)$ has characteristic function $E[e^{i\left\langle  z,X(1) \right\rangle }] = e^{-\tfrac{1}{2}\left\langle  z,\Sigma_X^1 z \right\rangle}$, which corresponds to a two dimensional $N(0,\Sigma_X^1)$ distributed random variable, having positive density for all values $x\in\R^2$. To contradict (\ref{eq_proof_ToContradict}), it is enough to show that for some $t>1$
\begin{align*}
B_2' e^{A_2(t-1)} x \neq \mathcal{B}_2' e^{\mathcal{A}_2(t-1)} x \qquad \text{ for all } x\in I \text{, where } I\subset\R^2 \text{ with } \lambda(I)>0.
\end{align*}
We define
\begin{align*}
		&B_1  	= \begin{pmatrix} 1 \\ 2 \end{pmatrix},\ B_2 	= \begin{pmatrix} 1 \\ 1 \end{pmatrix},\ \mathcal{B}_2 = \begin{pmatrix} 5 \\ 2 \end{pmatrix}, \quad
		A_1 	= \begin{pmatrix} 0	& 1 \\ 1 &1 \end{pmatrix}, \ A_2 	= \begin{pmatrix} -2 & 0 \\ 0 & -3 \end{pmatrix},\ \mathcal{A}_2 = \begin{pmatrix} 0 & 1 \\ -6	& -5 \end{pmatrix}\text{ and} \\
		&C_1  = \begin{pmatrix} 0 \\ 1 \end{pmatrix},\ C_2 =  \begin{pmatrix} 1 \\ 1 \end{pmatrix},\ \mathcal{C}_2 = \begin{pmatrix} 0 \\ 1 \end{pmatrix}.
\end{align*}
From this we obtain that the CARMA model has the same transfer function as the state space model, since $B_2' (z\mathbf{1}_2-A_2)^{-1} C_2 = \frac{2z+5}{z^2+5z+6}=\mathcal{B}_2' (z-\mathbf{1}_2\mathcal{A}_2)^{-1} \mathcal{C}_2$. Moreover, $[C_1\ \ A_1 C_1]=\begin{pmatrix} 0 & 1 \\ 1	& 1\end{pmatrix}$ is regular. Given a vector $x=(x_1, x_2)'$ it is left to investigate $\mathcal{B}_2' e^{\mathcal{A}_2(t-1)} x - B_2' e^{A_2(t-1)} x$. For a matrix $D\in\C^{2\times 2}$ with eigenvalues $\sigma(A)=\{\mu,\lambda\}$, \cite[Proposition 11.3.2]{B2009} gives that
\begin{align*}
e^D=\begin{cases}  e^\lambda ((1-\lambda)\mathbf{1}_2+D) & \text{if } \mu=\lambda \\ 
\frac{\mu e^\lambda - \lambda e^\mu}{\mu-\lambda} \mathbf{1}_2 + \frac{e^\mu - e^\lambda}{\mu-\lambda} D & \text{if } \mu\neq\lambda. \end{cases}
\end{align*}
Since $\sigma(\mathcal{A}_2(t-1))=\{-2(t-1),-3(t-1)\}$, we obtain
\begin{align*}
\mathcal{B}_2' e^{\mathcal{A}_2(t-1)} x - B_2' e^{A_2(t-1)} x &=	\begin{pmatrix} 5 & 2 \end{pmatrix}\Bigg(\frac{-3(t-1) e^{-2(t-1)} - (-2)(t-1) e^{-3(t-1)}}{-3(t-1)-(-2)(t-1)} \begin{pmatrix} 1 & 0 \\ 0 & 1 \end{pmatrix} \\
&\qquad\qquad+ \frac{e^{-3(t-1)} - e^{-2(t-1)}}{-3(t-1)-(-2)(t-1)} \begin{pmatrix} 0 & 1 \\ -6 & -5 \end{pmatrix} (t-1) \Bigg)\begin{pmatrix} x_1 \\ x_2 \end{pmatrix} \\
&\qquad\qquad- \begin{pmatrix} 1 & 1 \end{pmatrix}\Bigg(\begin{pmatrix} e^{-2(t-1)} & 0 \\ 0 & e^{-3(t-1)} \end{pmatrix}\Bigg)\begin{pmatrix} x_1 \\ x_2 \end{pmatrix} \\
&=	2x_1 \left( e^{-2(t-1)} + e^{-3(t-1)}\right) +x_2 \left( e^{-2(t-1)} \right) >0
\end{align*}
for all $x\in I=\{x\in\R^2 : x_1>0,x_2>0\}$ and $t>1$.
\end{proof}

Under more rigorous conditions on the coefficient functions, the concept of controllability from linear system theory allows for a characterization for special \textit{canonical} forms, which occur in the state space representation of CARMA processes ($\mathcal{A}$ is in companion matrix form). The following results summarize the key aspects of this characterization, which is mainly based on \cite{S1966}, but also \cite{B1987, RR1969, RR1971}.

\begin{Definition}[{\cite[Chapter 9 and 10]{R1996}}]
Let $Y(t)$ be a state space model as defined in (\ref{eq_statespace_notilde}), where $A(t)$ is $(p-1)$-times continuously differentiable and $C(t)$ $p$-times. We define the controllability matrix $W_p(t)$, as 
\begin{align*}
W_p(t)&=[K_0(t)\  K_1(t)\ \cdots \ K_{p-1}(t)], \text{ where}\\
K_0(t)&=C(t), \quad	K_{i+1}(t) = - A(t) K_i(t) + \tfrac{d}{dt}K_i(t), \quad i=1,\ldots,p-2.
\end{align*}
Then, the state process $X(t)$ is called 
\begin{enumerate}[label={(\alph*)}]
\item controllable on $[t_0,t_1]$, $t_0<t_1$, if there exists $t\in[t_0,t_1]$ with $Rank(W_p(t))=p$ and
\item instantaneously controllable, if $Rank(W_p(t))=p$ for all $t\in\R$.
\end{enumerate}
\end{Definition}

\begin{Proposition}[{\cite[Theorem 1]{S1966}}]\label{proposition:equivalencestatespacecanonical}
Consider a state space process satisfying (\ref{eq_statespace_notilde}) such that $A$ is $(p-1)$-times continuously differentiable and $C$ $p$-times. Then, it is equivalent to a CARMA process satisfying (\ref{eq_statespace_tilde}) if and only if it is instantaneously controllable. Equivalence means that there exists a regular matrix $T(t)\in M_{n\times n}(\mathcal{R}[t])$, which is continuously differentiable and satisfies
\begin{align*}
\mathcal{X}(t) = T(t) X(t)
\end{align*}
almost surely. The relationship between both systems is given by 
$T(t) = \mathcal{W}_p(t) W_p(t)^{-1}$, $\mathcal{A}(t) = \left( T(t) A(t) + \tfrac{d}{dt}T(t) \right) T(t)^{-1}$ and $\mathcal{C} = T(t) C(t)$, where $\mathcal{W}_p(t)$ and $W_p(t)$ are the controllability matrices of the state space model and the CARMA process.
\end{Proposition}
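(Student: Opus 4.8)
The statement is the stochastic counterpart of the classical reduction of a single–input linear time–varying system to phase–variable (controllable) canonical form, so the plan is to transfer Silverman's deterministic result \cite[Theorem 1]{S1966} to the integral representation \eqref{sol_tvStateSpace} and to settle the two implications through the behaviour of the controllability matrix under a linear change of variables. First I would record how a deterministic, regular, continuously differentiable $T(t)$ acts on the state equation \eqref{eq_statespace_notilde}. Setting $\mathcal{X}(t)=T(t)X(t)$ and applying the product rule to \eqref{sol_tvStateSpace} (which is licit because $T$ is non-random and $C^1$, so no quadratic–variation term appears) gives $d\mathcal{X}(t)=\big(\tfrac{d}{dt}T(t)+T(t)A(t)\big)T(t)^{-1}\mathcal{X}(t)\,dt+T(t)C(t)\,L(dt)$ together with $Y(t)=B(t)'T(t)^{-1}\mathcal{X}(t)$, which already produces the announced formulas $\mathcal{A}=(\tfrac{d}{dt}T+TA)T^{-1}$, $\mathcal{C}=TC$ and $\mathcal{B}'=B'T^{-1}$. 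On the level of the solution this rests on the transition–matrix identity $\Psi_{\mathcal{A}}(t,s)=T(t)\Psi_A(t,s)T(s)^{-1}$, which I would verify solves the initial value problem defining $\Psi_{\mathcal{A}}$; the almost sure (not merely in law) identity $\mathcal{X}(t)=T(t)X(t)$ then follows since both processes are the unique $L^2$–solutions driven by the same $L$.

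Next I would show that the controllability matrices transform covariantly, i.e.\ $\mathcal{W}_p(t)=T(t)W_p(t)$. This follows by induction from $\mathcal{K}_0=\mathcal{C}=TC=TK_0$ and the one–line computation $\mathcal{K}_{i+1}=-\mathcal{A}\mathcal{K}_i+\tfrac{d}{dt}\mathcal{K}_i=-(\tfrac{d}{dt}T+TA)T^{-1}(TK_i)+\tfrac{d}{dt}(TK_i)=T(-AK_i+\tfrac{d}{dt}K_i)=TK_{i+1}$, using $\mathcal{A}=(\tfrac{d}{dt}T+TA)T^{-1}$. Since $T(t)$ is regular, $\operatorname{Rank}\mathcal{W}_p(t)=\operatorname{Rank}W_p(t)$ for every $t$, which is the bridge between equivalence and instantaneous controllability.

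For the forward implication I would compute $\mathcal{W}_p$ for the companion pair $(\mathcal{A},\mathcal{C})$ with $\mathcal{C}=(0,\dots,0,1)'$: a short induction shows that in $\mathcal{K}_i$ all entries above row $p-i$ vanish while the entry in row $p-i$ equals $(-1)^i$, so $\mathcal{W}_p(t)$ carries $\pm1$ on its anti-diagonal with zeros strictly above it, whence $\det\mathcal{W}_p(t)=\pm1\neq0$ for all $t$ irrespective of the coefficients $a_j(t)$. Combined with $\mathcal{W}_p=TW_p$ and the invertibility of $T$, this forces $W_p(t)$ to have full rank $p$ for every $t$, i.e.\ instantaneous controllability.

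For the converse — the main part — I would invoke the deterministic construction of \cite[Theorem 1]{S1966}: instantaneous controllability means $W_p(t)$ is invertible for all $t$ (well-defined and $C^1$ because $A\in C^{p-1}$, $C\in C^{p}$), and Silverman's algorithm produces a regular $C^1$ matrix $T(t)$, equivalently $T(t)=\mathcal{W}_p(t)W_p(t)^{-1}$ with $\mathcal{W}_p$ the always-invertible companion controllability matrix of the preceding paragraph, such that $\mathcal{A}=(\tfrac{d}{dt}T+TA)T^{-1}$ is in companion form and $TC=(0,\dots,0,1)'$. Feeding this $T$ into the change-of-variables formula of the first step turns \eqref{eq_statespace_notilde} into the CARMA state equation \eqref{eq_statespace_tilde}, and $\mathcal{X}=TX$ is, by uniqueness of the $L^2$–solution, a genuine tvCARMA process. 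I expect the main obstacle to be exactly this passage from the deterministic ODE construction to the stochastic integral representation: one must ensure the transformation may legitimately be applied to \eqref{sol_tvStateSpace}, that the smoothness bookkeeping guarantees $\tfrac{d}{dt}T$ exists so the formula for $\mathcal{A}$ is meaningful, and that the equivalence is almost sure rather than only distributional, which is what makes $\{Y(t)\}$ literally the same process in both representations.
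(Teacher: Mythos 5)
Your proposal is sound, but note that the paper offers no proof of this proposition at all: it is imported verbatim from Silverman's Theorem~1 (reference \cite{S1966} in the bibliography), with the bracketed citation in the proposition header serving as the entire justification. What you have written is therefore a reconstruction rather than an alternative to anything in the paper, and it is a correct one. The pieces you supply that the paper leaves implicit are exactly the right ones: (i) the observation that a deterministic, regular, $C^1$ matrix $T(t)$ produces no quadratic-variation correction, so $d\mathcal{X}=(\dot T+TA)T^{-1}\mathcal{X}\,dt+TC\,L(dt)$ and the identification $\mathcal{X}(t)=T(t)X(t)$ holds almost surely by uniqueness of the $L^2$ solution driven by the same $L$, which is precisely the step needed to pass from Silverman's deterministic statement to the stochastic one; (ii) the covariance $\mathcal{W}_p=TW_p$ of the controllability matrices under the recursion $K_{i+1}=-AK_i+\tfrac{d}{dt}K_i$, which together with the anti-triangular structure of the companion-form controllability matrix ($\det\mathcal{W}_p\equiv\pm1$) settles the forward implication; and (iii) the smoothness bookkeeping $A\in C^{p-1}$, $C\in C^{p}$ making $W_p$ and hence $T$ continuously differentiable. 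One presentational caveat for the converse direction: as written, $T=\mathcal{W}_pW_p^{-1}$ is circular as a \emph{definition}, since $\mathcal{W}_p$ presupposes the target companion matrix $\mathcal{A}$; Silverman's actual construction builds the rows of $T$ from the last row of $W_p^{-1}$ via the adjoint recursion and only afterwards verifies that $\mathcal{A}=(\dot T+TA)T^{-1}$ is in companion form, $TC=e_p$, and $T=\mathcal{W}_pW_p^{-1}$. Since you explicitly invoke the deterministic theorem for this step, this is a matter of phrasing rather than a gap.
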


\begin{Corollary}
The class of time-varying L\'evy-driven state space models as defined in (\ref{eq_statespace_notilde}) with $(p-1)$-times continuously differentiable coefficient functions $A$, $p$-times continuously differentiable coefficient functions $C$ and controllability matrices $W_p(t)$ that have rank $p$ everywhere, is equivalent to the class of time-varying CARMA(p,q) processes as defined in (\ref{eq_statespace_tilde}) with $(p-1)$-times continuously differentiable coefficient functions $A$ and controllability matrices $W_p(t)$ that have rank $p$ everywhere.
\end{Corollary}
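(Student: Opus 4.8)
The statement is essentially a reformulation of Proposition \ref{proposition:equivalencestatespacecanonical}: the requirement $Rank(W_p(t))=p$ for all $t\in\R$, imposed on both classes, is exactly instantaneous controllability, and by that proposition a sufficiently smooth state space process is equivalent to a CARMA process if and only if it is instantaneously controllable. The plan is therefore to establish the claimed equality of classes by two inclusions, the only genuine work being to check that the differentiability and rank conditions match on both sides.

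For the inclusion of the CARMA class into the state space class, I would note that a tvCARMA process (Definition \ref{def_tvCARMA}) is by construction the solution (\ref{sol_tvCARMA}) of (\ref{eq_statespace_tilde}), hence a state space process (Definition \ref{def_tvStateSpace}) with $A=\mathcal A$ in companion form, $B=\mathcal B$ and the constant $C=\mathcal C=(0,\dots,0,1)'$. Since (\ref{sol_tvCARMA}) and (\ref{sol_tvStateSpace}) are driven by the same transition matrix $\Psi$ associated with $\mathcal A$, the two processes coincide and the equivalence is trivial ($T\equiv\mathbf{1}_p$). To place this model in the state space class I check its three defining conditions: $\mathcal C$ is constant and hence $p$-times continuously differentiable, $\mathcal A$ is $(p-1)$-times continuously differentiable by assumption, and the rank condition is automatic. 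For the last point I would evaluate the controllability matrix of the companion pair $(\mathcal A,\mathcal C)$ via the recursion $K_0=\mathcal C$, $K_{i+1}=-\mathcal A K_i+\tfrac{d}{dt}K_i$, and show by induction that the topmost nonzero entry of $K_i$ lies in row $p-i$ and equals $(-1)^i$. Consequently $W_p(t)$ is anti-triangular with $\pm1$ on its anti-diagonal, so $\abs{\det W_p(t)}=1$ and $Rank(W_p(t))=p$ for every $t$, regardless of the coefficients $a_i(t)$.

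For the converse inclusion I would start from a state space model in the first class, which is instantaneously controllable by hypothesis, and apply Proposition \ref{proposition:equivalencestatespacecanonical} to obtain an equivalent CARMA process through the regular transformation $T(t)=\mathcal W_p(t)W_p(t)^{-1}$, with $\mathcal A(t)=\big(T(t)A(t)+\tfrac{d}{dt}T(t)\big)T(t)^{-1}$ and $\mathcal C=T(t)C(t)$. That this CARMA process meets the rank condition of the second class is immediate from the computation above, since $\mathcal W_p(t)$ is again the controllability matrix of a companion pair.

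The step I expect to be the main obstacle is the differentiability assertion that $\mathcal A$ is $(p-1)$-times continuously differentiable. This does not follow from the transformation formula at face value, which a priori loses derivatives: tracking orders through the recursion shows that $K_i$ involves derivatives of $C$ of order at most $i$ and of $A$ of order at most $i-1$, so that under the hypotheses ($A$ being $(p-1)$-times and $C$ being $p$-times continuously differentiable) one directly controls $W_p$, and thence $T$ and $\mathcal A$, only up to a single continuous derivative. Recovering the full $(p-1)$ continuous derivatives of $\mathcal A$ rests on the finer cancellations built into Silverman's construction of the phase-variable canonical form, and it is exactly here that the precise matching of the differentiability orders on the two sides must be justified. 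Once this bookkeeping is in place, the two inclusions together yield the asserted equivalence of classes.
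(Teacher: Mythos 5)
Your proposal follows essentially the same route as the paper: the forward inclusion is immediate and the converse is an application of Proposition \ref{proposition:equivalencestatespacecanonical} via the transformation $T(t)=\mathcal{W}_p(t)W_p(t)^{-1}$. You are in fact more thorough than the paper, which simply declares the CARMA-to-state-space direction obvious without verifying the rank of the companion controllability matrix, and which does not address the derivative-counting issue you flag for $\mathcal{A}$ --- that smoothness bookkeeping is silently delegated to Silverman's theorem through the citation in Proposition \ref{proposition:equivalencestatespacecanonical}.
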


\begin{proof}
Any time-varying CARMA(p,q) process is obviously also a time-varying state space process. On the contrary, let $Y(t)$ be a time-varying state space process defined by (\ref{eq_statespace_notilde}), which is instantaneously controllable with controllability matrix $W_p(t)$. Then, due to Proposition \ref{proposition:equivalencestatespacecanonical}, the state system $dX(t) =  A(t)X(t)dt + C(t) L(dt)$ is equivalent to the CARMA system $d\mathcal{X}(t) = \mathcal{A}(t)\mathcal{X}(t)dt+\mathcal{C} L(dt)$ with 
\begin{align*}
\mathcal{X}(t) = T(t) X(t), \quad \mathcal{C} = T(t) C(t) \quad \text{and} \quad \mathcal{A}(t) = \left( T(t) A(t) + \tfrac{d}{dt}T(t) \right) T(t)^{-1},
\end{align*}
where $T(t) = \mathcal{W}_p(t) W_p(t)^{-1}$ is regular. Thus
\begin{align*}
Y(t)&= B(t)' X(t) = B(t)' T(t)^{-1} \mathcal{X}(t) = \mathcal{B}(t)' \mathcal{X}(t)\quad \text{and}\\
		d\mathcal{X}(t) &= \mathcal{A}(t)\mathcal{X}(t)dt+\mathcal{C} L(dt),
\end{align*}
which is a representation for $Y(t)$ as a time-varying CARMA(p,q) process in (\ref{eq_statespace_tilde}).
\end{proof}

\subsection{Locally stationary linear state space models - Peano-Baker series}\label{sec4-3}

We investigate sufficient conditions for sequences of time-varying state space processes, which obviously also includes sequences of time-varying CARMA processes, to be locally stationary. \\
Let $\{Y_N(t), t\in\R\}_{N\in\N}$ be a sequence of time-varying linear state space processes defined by
\begin{align*} 
Y_N(t) 	&= B(t)' X_N(Nt)\quad \text{and} \\
X_N(Nt) &= \Psi_N(Nt,0) \int_{-\infty}^{Nt} \Psi_N(u,0)^{-1} C(\tfrac{u}{N}) L(du),
\end{align*}
where $\Psi_N(s,s_0)$ is the solution of the matrix differential equation
\begin{align*}
\Psi_N(s_0,s_0) &= \mathbf{1}_p \\
\tfrac{d}{ds}\Psi_N(s,s_0) &= A(\tfrac{s}{N}) \Psi_N(s,s_0), \quad \text{for all } s,s_0\in\R \text{ with } s>s_0,
\end{align*}
which can be expressed as (see \cite{BS2011b} Section 2)
\begin{align*}
\Psi_N(s,s_0) 	= \mathbf{1}_p + \int_{s_0}^s A(\tfrac{\tau_1}{N}) d\tau_1 + \int_{s_0}^s A(\tfrac{\tau_1}{N}) \int_{s_0}^{\tau_1} A(\tfrac{\tau_2}{N}) d\tau_2 d\tau_1 + \ldots \ .
\end{align*}
The substitution $s\mapsto s+Nt$ in (\ref{equation:substitutionsNt}), is necessary to achieve a dependence of the kernel function $g_N^0(t,\cdot)$ on $Nt-u$. Therefore, we define $\widetilde{\Psi}_{N,t}(0,-(Nt-u))$ for a fixed point $t\in\R$ as the solution of the matrix differential equation
\begin{align*}
\widetilde{\Psi}_{N,t}(s_0,s_0) &= \mathbf{1}_p \\
\tfrac{d}{ds}\widetilde{\Psi}_{N,t}(s,s_0) &= A(\tfrac{s}{N}+t) \widetilde{\Psi}_{N,t}(s,s_0), \quad \text{for all } s,s_0\in\R \text{ with } s>s_0,
\end{align*}
which can again be expressed as
\begin{align*}
\widetilde{\Psi}_{N,t}(s,s_0)= \mathbf{1}_p + \int_{s_0}^s A(\tfrac{\tau_1}{N}+t) d\tau_1 + \int_{s_0}^s A(\tfrac{\tau_1}{N}+t) \int_{s_0}^{\tau_1} A(\tfrac{\tau_2}{N}+t) d\tau_2 d\tau_1 + \ldots
\end{align*}
From \cite[Theorem 4.2.]{B1970} we obtain $\Psi_N(Nt,0)\Psi_N(u,0)^{-1} = \Psi_N(Nt,0)\Psi_N(0,u) = \Psi_N(Nt,u)$. Since,
\begin{align*}
\Psi_N(Nt,u) &= \mathbf{1}_p + \int_{u}^{Nt} A(\tfrac{\tau_1}{N}) d\tau_1 + \int_{u}^{Nt} A(\tfrac{\tau_1}{N}) \int_{u}^{\tau_1} A(\tfrac{\tau_2}{N}) d\tau_2 d\tau_1 + \ldots \\
&= \mathbf{1}_p + \int_{u-Nt}^0 A(\tfrac{\tau_1}{N}+t) d\tau_1 + \int_{u-Nt}^0 A(\tfrac{\tau_1}{N}+t) \int_{u-Nt}^{\tau_1} A(\tfrac{\tau_2}{N}+t) d\tau_2 d\tau_1 + \ldots \\
&= \widetilde{\Psi}_{N,t}(0,-(Nt-u)),
\end{align*}
we neglect the superscript tilde and define a sequence of time-varying linear state space processes as follows.

\begin{Definition}\label{def_sequencetimevarying_StateSpace_PBS}
A sequence of time-varying linear state space processes $\{Y_N(t), t\in\R\}_{N\in\N}$ is defined as
\begin{align}\label{locstat_CARMA_PBS}
Y_N(t) = \int_\R \mathbb{1}_{\{Nt-u\geq 0\}} B(t)'  \Psi_{N,t}^0(0,-(Nt-u)) C(\tfrac{u}{N}) L(du) 
\end{align}
with (limiting) kernel function (in view of Definition \ref{def_locstat_cont})						
\begin{align*}
g_N^0(Nt,Nt-u) 	&= \mathbb{1}_{\{Nt-u\geq 0\}} B(t)' \Psi_{N,t}^0(0,-(Nt-u)) C(\tfrac{-(Nt-u)}{N}+t) \quad\text{and}\\
g(t,Nt-u)	&= \mathbb{1}_{\{Nt-u\geq 0\}} B(t)' \Psi_t(0,-(Nt-u)) C(t),
\end{align*}
where $\Psi_{N,t}^0(0,-(Nt-u))$ and $\Psi_t(0,-(Nt-u))$ are the solutions of the matrix differential equations
\begin{align}
\begin{aligned}\label{equ_IVP_PBS}
\Psi_{N,t}^0(s_0,s_0) &= \mathbf{1}_p,  \quad &&\tfrac{d}{ds} \Psi_{N,t}^0(s,s_0) = A(\tfrac{s}{N}+t) \Psi_{N,t}^0(s,s_0),\\
\Psi_t(s_0,s_0)&= \mathbf{1}_p\quad \text{and} \quad && \tfrac{d}{ds} \Psi_t(s,s_0)= A(t) \Psi_t(s,s_0)		
\end{aligned}
\end{align}
for $s>s_0$.
\end{Definition}
\noindent Using the Peano-Baker series, if necessary, the solutions of the above matrix differential equations are given by $\Psi_t(s,s_0)=e^{A(t)(s-s_0)}$ and
\begin{align*}
\Psi_{N,t}^0(s,s_0) = \mathbf{1}_p+\int_{s_0}^s A(\tfrac{\tau_1}{N}+t) d\tau_1+ \int_{s_0}^s A(\tfrac{\tau_1}{N}+t) \int_{s_0}^{\tau_1} A(\tfrac{\tau_2}{N}+t) d\tau_2 d\tau_1 + \ldots.
\end{align*}

\begin{Proposition}\label{prop_C1_C2_C3_StateSpace_PeanoBaker}
Let $\{Y_N(t), t\in\R\}_{N\in\N}$ be a sequence of time-varying state space processes as in Definition \ref{def_sequencetimevarying_StateSpace_PBS}. If
\begin{enumerate}
\item[$(C1)$] the coefficient functions $A(\cdot)$, $B(\cdot)$ and $C(\cdot)$ are continuous,
\item[$(C2)$] $\norm{B(s)}<\infty$ for all $s\in\R$, $\sup_{s\in\R}\|C(s)\|<\infty$ and
\item[$(C3)$] $\norm{ \Psi_{N,t}^0(0,u)}  \leq F_t(u)$ for some real function $F_t\in L^2((-\infty,0])$ for all $N\in\N$ and $t\in\R$,
\end{enumerate}
then $Y_N(t)$ is locally stationary.
\end{Proposition}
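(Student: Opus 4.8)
The plan is to verify the two requirements of Definition \ref{def_locstat_cont} directly for the kernels $g_N^0$ and the candidate limit $g$ exhibited in Definition \ref{def_sequencetimevarying_StateSpace_PBS}. Writing $v=Nt-u$ for the second argument of the kernels and recalling $\Psi_t(0,-v)=e^{A(t)v}$, we have
\begin{align*}
g_N^0(Nt,v) = \mathbb{1}_{\{v\geq 0\}} B(t)' \Psi_{N,t}^0(0,-v) C(\tfrac{-v}{N}+t), \qquad g(t,v) = \mathbb{1}_{\{v\geq 0\}} B(t)' e^{A(t)v} C(t).
\end{align*}
First I would dispose of condition $(a)$. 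Using this factorisation together with $(C2)$ and $(C3)$, for $v\geq 0$ one has
\begin{align*}
\abs{g_N^0(Nt,v)} \leq \norm{B(t)} \, \norm{\Psi_{N,t}^0(0,-v)} \, \norm{C(\tfrac{-v}{N}+t)} \leq \norm{B(t)} \Big(\sup_{s\in\R}\norm{C(s)}\Big) F_t(-v),
\end{align*}
and since $F_t\in L^2((-\infty,0])$ the right-hand side is square integrable in $v$; hence $g_N^0(Nt,\cdot)\in L^2(\R)$ for every $t\in\R$ and $N\in\N$.

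Next I would establish the pointwise-in-$v$ convergence feeding the dominated convergence theorem. I compare the Peano--Baker series of $\Psi_{N,t}^0(0,-v)$ and of $\Psi_t(0,-v)$ term by term: in the $n$-th iterated integral the variables range over $[-v,0]$, so the matrix arguments $\tfrac{\tau_j}{N}+t$ lie in the compact set $[t-v,t]$, on which $A$ is bounded by some $M_{t,v}<\infty$ uniformly in $N$ by $(C1)$. Thus both series are dominated by $\sum_{n\geq0}(M_{t,v}v)^n/n!=e^{M_{t,v}v}$, which legitimises interchanging $\lim_{N\to\infty}$ with the summation; since $A(\tfrac{\tau_j}{N}+t)\to A(t)$, each term converges to the corresponding term of $e^{A(t)v}$, giving $\Psi_{N,t}^0(0,-v)\to\Psi_t(0,-v)$. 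Passing the bound of $(C3)$ to this limit yields $\norm{\Psi_t(0,-v)}\leq F_t(-v)$, so $g(t,\cdot)\in L^2(\R)$ as well. Combining $\Psi_{N,t}^0(0,-v)\to e^{A(t)v}$ with $C(\tfrac{-v}{N}+t)\to C(t)$ shows that the integrand of $\norm{g_N^0(Nt,\cdot)-g(t,\cdot)}_{L^2}^2$ tends to $0$ pointwise, while $4\norm{B(t)}^2(\sup_{s}\norm{C(s)})^2 F_t(-v)^2$ is an integrable majorant; dominated convergence then gives $g_N^0(Nt,\cdot)\xrightarrow{L^2}g(t,\cdot)$.

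It remains to prove the $L^2$-continuity of $t\mapsto g(t,\cdot)$, which I expect to be the main obstacle. The map $t\mapsto g(t,v)=\mathbb{1}_{\{v\geq0\}}B(t)'e^{A(t)v}C(t)$ is continuous for each $v$ by $(C1)$ and continuity of the matrix exponential, so by Lemma \ref{lem:L2cont} it suffices to produce, for each $t$, an $\epsilon_t>0$ and an $L^2$-majorant valid uniformly on $[t-\epsilon_t,t+\epsilon_t]$. The delicate point is that $(C3)$ only supplies a pointwise-in-$t$ majorant $F_t$, whereas the lemma needs one uniform over a neighbourhood. I would overcome this by observing that the bound $\norm{e^{A(t)v}}\leq F_t(-v)$ forces $\int_0^\infty\norm{e^{A(t)v}}^2\,dv<\infty$, so $A(t)$ is Hurwitz for every $t$; by continuity of the spectrum and of $A$, the eigenvalues of $A(s)$ remain in a half-plane $\{\mathfrak{Re}(z)\leq-\lambda\}$ with $\lambda>0$ for $s$ in a sufficiently small closed interval $[t-\epsilon_t,t+\epsilon_t]$, and a standard uniform exponential-stability estimate over this compact set of Hurwitz matrices yields a constant $\gamma$, independent of $s$, with $\norm{e^{A(s)v}}\leq\gamma e^{-\lambda v}$ for all $v\geq0$. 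Then
\begin{align*}
\abs{g(s,v)} \leq \Big(\sup_{s\in[t-\epsilon_t,t+\epsilon_t]}\norm{B(s)}\Big)\,\gamma e^{-\lambda v}\,\Big(\sup_{s\in\R}\norm{C(s)}\Big)\mathbb{1}_{\{v\geq0\}} =: f_t(v)\in L^2(\R)
\end{align*}
furnishes the majorant required by Lemma \ref{lem:L2cont}, where the first supremum is finite because $B$ is continuous on a compact set. Having verified both conditions of Definition \ref{def_locstat_cont}, the sequence $\{Y_N(t)\}_{N\in\N}$ is locally stationary.
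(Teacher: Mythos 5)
Your proof is correct, and while its overall skeleton (pointwise convergence of the kernels followed by dominated convergence with the majorant supplied by $(C3)$) matches the paper's, the two central technical steps are handled genuinely differently. For the convergence $\Psi_{N,t}^0(0,u)\to\Psi_t(0,u)$ the paper views the columns of the transition matrices as solutions of initial value problems with right-hand sides $A(\tfrac{s}{N}+t)x$ and $A(t)x$ and invokes a continuous-dependence/perturbation theorem for ODEs (Walter, \S12.V), whereas you compare the Peano--Baker series of $\Psi_{N,t}^0(0,-v)$ with the exponential series of $e^{A(t)v}$ term by term, using the simplex-volume bound $\norm{\mathcal{I}_n}\leq (M_{t,v}v)^n/n!$ as a Weierstrass-type dominant to interchange limit and summation; this is self-contained, stays entirely within the representation the paper itself introduces for $\Psi_{N,t}^0$, and avoids the external ODE stability result, at the cost of being slightly longer. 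More substantively, you also verify the $L^2$-continuity of $t\mapsto g(t,\cdot)$ required by Definition \ref{def_locstat_cont}, a point the paper's proof of this proposition does not address at all: you correctly pass the bound of $(C3)$ to the limit to conclude $\int_0^\infty\norm{e^{A(t)v}}^2dv<\infty$, hence that each $A(t)$ is Hurwitz, and then obtain a locally uniform exponential majorant so that Lemma \ref{lem:L2cont} applies. Two cosmetic remarks: the ``standard uniform exponential-stability estimate'' over a compact family of Hurwitz matrices deserves a one-line justification (e.g.\ the Cauchy-integral representation of $e^{Av}$ over a fixed contour in $\{\mathfrak{Re}(z)<0\}$), and with eigenvalue real parts only known to be $\leq-\lambda$ on the neighbourhood you should state the bound as $\gamma e^{-\lambda' v}$ for some $0<\lambda'<\lambda$ to absorb possible polynomial factors from nontrivial Jordan blocks; neither affects the validity of the argument.
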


\begin{proof}
Consider $Y_N(t)$, $g_N^0$, $g$, $\Psi_{N,t}^0$ and $\Psi_t$ as defined above. For fixed $u,t\in\R$ it holds
\begin{align*}
\abs{g_N^0(Nt,-u)-g(t,-u)}&= \mathbb{1}_{\{u\leq 0\}} \Big| B(t)' \Big( \Psi_{N,t}^0(0,u) - \Psi_t(0,u) \Big) C(\tfrac{u}{N}+t) \\
&\qquad\qquad + B(t)' \Psi_t(0,u) \Big( C(\tfrac{u}{N}+t) - C(t) \Big)\Big| \\				
&\leq  \mathbb{1}_{\{u\leq 0\}}\Big( \norm{B(t)} \norm{ \Psi_{N,t}^0(0,u) - \Psi_t(0,u)}   \Big(\sup_{s\in\R}\norm{C(s)}\Big)\\
&\qquad\qquad+ \norm{B(t)}\norm{ \Psi_t(0,u)}\norm{C(\tfrac{u}{N}+t) - C(t)}\Big) =:P_1+P_2.
\end{align*}
Since $C(\cdot)$ is continuous, we immediately obtain $P_2\rightarrow0$ as $N\rightarrow\infty$. In view of $P_1$ it is sufficient to show that for all $\varepsilon>0$ and sufficiently large $N$
\begin{align}\label{equation:inequalityPsilocstat}
\norm{\Psi_{N,t}^0(0,u)-\Psi_t(0,u)}\leq \varepsilon.
\end{align}
Due to the equivalence of all norms on $M_{p\times p}(\R)$, it is sufficient to show (\ref{equation:inequalityPsilocstat}) for the norm of each column. By $\Psi_{N,t}^{0(j)}(0,u)$ and $\Psi_t^{(j)}(0,u)$ we denote the $j$-th column, $j=1,\ldots,p$ of $\Psi_{N,t}^0(0,u)$ and $\Psi_t(0,u)$. Then, for functions $f_{N,t}(s,x)=A(\tfrac{s}{N}+t)x$ and $\tilde{f}_{t}(s,x)=A(t)x$ we obtain
\begin{align*}
\Psi_{N,t}^{0(j)}(u,u) &=e_j,  \quad &&\tfrac{d}{ds} \Psi_{N,t}^{0(j)}(s,u) = f_{N,t}\left(s,\Psi_{N,t}^{0(j)}(s,u)\right),\\
\Psi_t^{(j)}(u,u)&= e_j\quad \text{and} \quad && \tfrac{d}{ds} \Psi_t^{(j)}(s,u)= \tilde{f}_{t}\left(s,\Psi_t^{(j)}(s,u)\right),		
\end{align*}
where $e_j$ denotes the $j$-th unit vector. Note that $f_{N,t}$ and $\tilde{f}_{t}$ are Lipschitz continuous in the second argument with Lipschitz constant $L=\sup_{s\in[u,0]}\norm{A(\tfrac{s}{N}+t)}+A(t)<\infty$. Moreover,
\begin{align*}
\norm{ f_t\left(s,\Psi_{t}^{(j)}(s,u)\right) -  f_{N,t}\left(s,\Psi_{t}^{(j)}(s,u)\right)} \leq \delta \norm{\Psi_{t}^{(j)}(s,u)} \leq \delta c,\quad s\in[u,0],
\end{align*}
since $\norm{A(\tfrac{s}{N}+t)-A(t)}<\delta$ for any $\delta>0$ for sufficiently large $N$ and $\Psi_{t}^{(j)}(\cdot,u)$ is continuous and thus bounded on $[u,0]$. An application of \cite[§12.V.]{W2000} gives (\ref{equation:inequalityPsilocstat}).
Finally, by using the dominated convergence theorem with majorant
\begin{align*}
\abs{g_N^0(Nt,-u)} &\leq \mathbb{1}_{\{u\leq 0\}} \norm{B(t)}  \Big(\sup_{s\in\R}\norm{C(s)}\Big)  \norm{ \Psi_{N,t}^0(0,u)}  \\
		&\leq  \mathbb{1}_{\{u\leq 0\}} c_t  F_t(u)\in L^2(\R)
\end{align*}
for some constant $c_t>0$, where the last inequality follows from $(C1)$ and $(C2)$, we can deduce that $\norm{g_N^0(Nt,\cdot)-g(t,\cdot)}_{L^2}\rightarrow0$ as $N\rightarrow\infty$.
\end{proof}

\noindent In fact, assumption $(C3)$ in Proposition \ref{prop_C1_C2_C3_StateSpace_PeanoBaker} is an immediate consequence if the state space system is uniformly exponentially stable. 

\begin{Definition}[{\cite[Chapter 6, Definition 6.5 and Theorem 6.7)]{R1996}}]
A sequence of linear state space models as in Definition \ref{def_sequencetimevarying_StateSpace_PBS} is called uniformly exponentially stable, if there exist $\gamma>0$ and $\lambda>0$, such that
\begin{align*}
\norm{ \Psi_{N,t}^0(s,s_0)}  \leq \gamma e^{-\lambda(s-s_0)}
\end{align*}
for all $s,s_0$, where $s>s_0$, $N\in\N$ and $t\in\R$.
\end{Definition}

\begin{Corollary}\label{corollary:exponentialstableC3}
If a linear state space model as in Definition \ref{def_sequencetimevarying_StateSpace_PBS} is uniformly exponentially stable, then
\begin{align*}
		\norm{ \Psi_{N,t}^0(0,u)}  \leq \gamma e^{\lambda u} =: F_t(u)
		\in L^2((-\infty,0]), \text{ for all } N,t,u\leq0,
\end{align*}
which is $(C3)$ in Proposition \ref{prop_C1_C2_C3_StateSpace_PeanoBaker}.
\end{Corollary}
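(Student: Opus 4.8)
The plan is to apply the uniform exponential stability estimate directly with a suitable choice of time arguments. By definition, there exist constants $\gamma>0$ and $\lambda>0$ such that $\norm{\Psi_{N,t}^0(s,s_0)}\leq \gamma e^{-\lambda(s-s_0)}$ for all $s>s_0$, uniformly in $N\in\N$ and $t\in\R$. First I would specialise this to $s=0$ and $s_0=u$ with $u<0$; since then $0=s>s_0=u$, the estimate applies and gives
\[
\norm{\Psi_{N,t}^0(0,u)}\leq \gamma e^{-\lambda(0-u)}=\gamma e^{\lambda u},
\]
which is exactly the asserted bound and is independent of $N$ and $t$. The endpoint $u=0$ is handled separately by noting $\Psi_{N,t}^0(0,0)=\mathbf{1}_p$, so that $\norm{\Psi_{N,t}^0(0,0)}\leq\gamma$ after enlarging $\gamma$ if necessary to ensure $\gamma\geq\norm{\mathbf{1}_p}$, which preserves the estimate.

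It then remains to check that the dominating function $F_t(u)=\gamma e^{\lambda u}$ (which in fact does not depend on $t$) belongs to $L^2((-\infty,0])$. This is a routine computation: since $\lambda>0$,
\[
\int_{-\infty}^0 \abs{F_t(u)}^2\,du=\gamma^2\int_{-\infty}^0 e^{2\lambda u}\,du=\frac{\gamma^2}{2\lambda}<\infty,
\]
so $F_t\in L^2((-\infty,0])$ for every $t\in\R$. Combining the two displays yields precisely condition $(C3)$ of Proposition \ref{prop_C1_C2_C3_StateSpace_PeanoBaker}.

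I do not anticipate any real obstacle here, as the corollary is an immediate consequence of the definition of uniform exponential stability. The only points requiring mild attention are the treatment of the boundary value $u=0$ and the verification that the resulting bound is uniform in both $N$ and $t$, as demanded by $(C3)$.
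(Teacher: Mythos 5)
Your proposal is correct and follows exactly the route the paper intends: the corollary is an immediate substitution of $s=0$, $s_0=u$ into the definition of uniform exponential stability, followed by the elementary computation $\int_{-\infty}^0 \gamma^2 e^{2\lambda u}\,du=\gamma^2/(2\lambda)<\infty$; the paper in fact leaves this proof implicit. Your extra remarks on the endpoint $u=0$ and the uniformity in $N$ and $t$ are fine but not substantively different from the paper's (omitted) argument.
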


\begin{Proposition}\label{prop_C3_StateSpace_PeanoBaker}
Each of the following two conditions is sufficient for a state space model $\{Y_N(t), t\in\R\}_{N\in\N}$ as in Definition \ref{def_sequencetimevarying_StateSpace_PBS} to be uniformly exponentially stable.
\begin{enumerate}[label={(\alph*)}]
\item Let $\lambda_{max}(t)$, $t\in\R$ denote the largest eigenvalue of $A(t)+A(t)'$. If there exist positive constants $\gamma$ and $\lambda$, such that
\begin{align*}
\int_{s_0}^s \lambda_{max}(\tfrac{\nu}{N}+t) d\nu \leq -\lambda(s-s_0) + \gamma 
\end{align*}
for all $s, s_0,t$ and $N$ with $s\geq s_0$, then, due to \cite[Corollary 8.4]{R1996}, $Y_N(t)$ is uniformly exponentially stable.
\item Suppose A(t) is continuously differentiable and there exist positive constants $\alpha$, $\mu$, and $\beta$ such that $\norm{ A(t)} \leq\alpha$, $\norm{ \frac{d}{dt}A(t)} \leq\beta$ and the eigenvalues $\lambda_j(t)$ of $A(t)$ for $j=1,\ldots,p$ satisfy $\mathfrak{Re}(\lambda_j(t))\leq-\mu$ for all $t$. Then, due to \cite[Theorem 8.7]{R1996}, $Y_N(t)$ is uniformly exponentially stable.
\end{enumerate}
\end{Proposition}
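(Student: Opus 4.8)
The plan is to fix $N\in\N$ and $t\in\R$, regard $\Psi_{N,t}^0(s,s_0)$ as the transition matrix of the linear system $\tfrac{d}{ds}z(s)=\tilde{A}_{N,t}(s)z(s)$ with $\tilde{A}_{N,t}(s):=A(\tfrac{s}{N}+t)$ (cf. (\ref{equ_IVP_PBS})), and then read off uniform exponential stability from the cited criteria applied to $\tilde{A}_{N,t}$. Since the pointwise stability statements are exactly \cite[Corollary 8.4]{R1996} and \cite[Theorem 8.7]{R1996}, the only thing that needs care is that the resulting constants $\gamma,\lambda$ can be chosen independently of $N$ and $t$.

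For part (a) I would reproduce the Lyapunov argument behind \cite[Corollary 8.4]{R1996}. Fixing $x\in\R^p$, setting $z(s)=\Psi_{N,t}^0(s,s_0)x$ and $V(s)=\norm{z(s)}^2$, one computes
\begin{align*}
\tfrac{d}{ds}V(s)=z(s)'\bigl(\tilde{A}_{N,t}(s)+\tilde{A}_{N,t}(s)'\bigr)z(s)\leq\lambda_{max}(\tfrac{s}{N}+t)\,V(s),
\end{align*}
because $\lambda_{max}(\tfrac{s}{N}+t)$ is the largest eigenvalue of the symmetric matrix $\tilde{A}_{N,t}(s)+\tilde{A}_{N,t}(s)'$. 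Grönwall's inequality together with the hypothesis then gives
\begin{align*}
\norm{\Psi_{N,t}^0(s,s_0)x}^2\leq\norm{x}^2\exp\Bigl(\int_{s_0}^s\lambda_{max}(\tfrac{\nu}{N}+t)\,d\nu\Bigr)\leq\norm{x}^2 e^{\gamma}e^{-\lambda(s-s_0)},
\end{align*}
so that $\norm{\Psi_{N,t}^0(s,s_0)}\leq e^{\gamma/2}e^{-(\lambda/2)(s-s_0)}$. Here $\gamma$ and $\lambda$ are the constants from the assumption and thus do not depend on $N$ or $t$, which is precisely uniform exponential stability.

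For part (b) I would verify that the hypotheses of \cite[Theorem 8.7]{R1996} hold for $\tilde{A}_{N,t}$ with bounds that are uniform in $N$ and $t$. The norm bound persists, $\norm{\tilde{A}_{N,t}(s)}=\norm{A(\tfrac{s}{N}+t)}\leq\alpha$; the eigenvalues of $\tilde{A}_{N,t}(s)$ are the $\lambda_j(\tfrac{s}{N}+t)$, whence $\mathfrak{Re}(\lambda_j(\tfrac{s}{N}+t))\leq-\mu$; and the derivative in $s$ is even smaller than the given bound, since by the chain rule
\begin{align*}
\norm{\tfrac{d}{ds}\tilde{A}_{N,t}(s)}=\tfrac{1}{N}\,\norm{\restr{\tfrac{d}{dw}A(w)}{w=\frac{s}{N}+t}}\leq\tfrac{\beta}{N}\leq\beta.
\end{align*}
As $\alpha,\mu,\beta$ are independent of $N$ and $t$, \cite[Theorem 8.7]{R1996} yields stability constants $\gamma,\lambda>0$ depending only on $\alpha,\mu,\beta$ (and $p$), again independent of $N$ and $t$.

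The main point is this uniformity rather than any individual estimate: because $\{Y_N(t)\}_{N\in\N}$ is a sequence and the systems carry the additional parameter $t$, applying the cited theorems separately for each pair $(N,t)$ would a priori only furnish pointwise stability with possibly $(N,t)$-dependent constants. What rescues the argument is that $s\mapsto\tfrac{s}{N}+t$ is an affine reparameterization of time: it leaves the spectrum and the operator norm of $A$ unchanged and contracts the $s$-derivative by the factor $\tfrac{1}{N}\leq1$. Consequently none of the quantities feeding into \cite[Corollary 8.4]{R1996} and \cite[Theorem 8.7]{R1996} can deteriorate as $N\to\infty$ or as $t$ varies over $\R$, and the exponential bound $\norm{\Psi_{N,t}^0(s,s_0)}\leq\gamma e^{-\lambda(s-s_0)}$ holds with common constants.
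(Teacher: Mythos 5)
Your proposal is correct and follows the same route as the paper, which offers no separate proof for this proposition beyond the citations to \cite[Corollary 8.4]{R1996} and \cite[Theorem 8.7]{R1996} embedded in the statement itself. You supply exactly the detail the paper leaves implicit — reproducing the Lyapunov--Gr\"onwall argument for (a) and checking that the reparameterization $s\mapsto\tfrac{s}{N}+t$ preserves the norm and spectral hypotheses while only shrinking the derivative bound by the factor $\tfrac{1}{N}$ — which is precisely the uniformity in $(N,t)$ needed to upgrade the pointwise citations to uniform exponential stability of the whole sequence.
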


\begin{Remark}
Part (b) of Proposition \ref{prop_C3_StateSpace_PeanoBaker} corresponds to condition $(C2)$ in Proposition \ref{prop_C1_C2_CAR1} for sequences of tvCAR(1) processes.
\end{Remark}

\begin{Remark}
There is a high structural resemblance of the above results to known results from the theory on locally stationary processes in discrete time. Indeed, conditions for time-varying AR(p) processes to be locally stationary as discussed in \cite[Theorem 2.3 ]{D1996} and \cite{K1995} are closely related to the conditions in Proposition \ref{prop_C3_StateSpace_PeanoBaker}.
\end{Remark}

If the commutativity assumption (\ref{ass_commutative}) holds, the transition matrix is given by
\begin{align*}	
\Psi_{N,t}^0(0,u)=e^{\int_u^{0} A\left(\tfrac{s}{N}+t\right) ds}.
\end{align*}
Then, $Y_N(t)$ simplifies to
\begin{align*}
Y_N(t) = \int_{-\infty}^{Nt} B(t)' e^{\int_{-(Nt-u)}^{0} A\left(\tfrac{s}{N}+t\right) ds} C(\tfrac{u}{N}) L(du).
\end{align*}


\begin{Proposition}\label{proposition:tvLDSParelocstatcommutative}
Let $\{Y_N(t), t\in\R\}_{N\in\N}$ be a sequence of time-varying state space processes as in Definition \ref{def_sequencetimevarying_StateSpace_PBS}. If $(C1)$ and $(C2)$ from Proposition \ref{prop_C1_C2_C3_StateSpace_PeanoBaker} hold, $\{A(t)\}_{t\in\R}$ is mutually commutative, the eigenvalues $\lambda_j(t)$ of $A(t)$ for $j=1,\ldots,p$ satisfy $\mathfrak{Re}(\lambda_j(t))\leq-\mu$ for all $t\in\R$ and some $\mu>0$ and either
\begin{enumerate}
\item[$(D1)$] $A(t)$ is diagonalizable for all $t\in\R$ or
\item[$(D2)$] there exists $\tau>0$ such that $\sup_{\tau\leq x<\infty}\norm{\tfrac{1}{x}\int_\tau^xA\left(\tfrac{s}{N}+t\right)}<C$ for all $N$ and a constant $C>0$,
\end{enumerate}
then $Y_N(t)$ is locally stationary.
\end{Proposition}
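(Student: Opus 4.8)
The plan is to reduce everything to verifying condition $(C3)$ of Proposition \ref{prop_C1_C2_C3_StateSpace_PeanoBaker}. Since $(C1)$ and $(C2)$ are assumed outright, as soon as I exhibit a function $F_t\in L^2((-\infty,0])$ with $\norm{\Psi_{N,t}^0(0,u)}\le F_t(u)$ for all $N\in\N$, $t\in\R$ and $u\le 0$, local stationarity is immediate from that proposition. Mutual commutativity of $\{A(t)\}_{t\in\R}$ is what makes this tractable: by Remark \ref{rem_commutative} the commutativity assumption (\ref{ass_commutative}) holds, so the Peano--Baker series collapses to a matrix exponential and
\[
\Psi_{N,t}^0(0,u) = e^{M_{N,t}(u)}, \qquad M_{N,t}(u) := \int_u^0 A\bigl(\tfrac{s}{N}+t\bigr)\,ds .
\]
Thus the entire problem becomes a uniform bound on $\norm{e^{M_{N,t}(u)}}$ by an $L^2$-function of $u$.

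First I would treat case $(D1)$. A mutually commutative family of diagonalizable matrices is simultaneously diagonalizable, so there is a single invertible $P$ (independent of $N$, $t$ and $u$) with $A(r)=P\Lambda(r)P^{-1}$, where $\Lambda(r)$ is diagonal carrying the eigenvalues $\lambda_j(r)$. Then $M_{N,t}(u)=P\,\mathrm{diag}\bigl(\int_u^0\lambda_j(\tfrac{s}{N}+t)\,ds\bigr)P^{-1}$, and writing $\kappa=\norm{P}\,\norm{P^{-1}}$ we get
\[
\norm{e^{M_{N,t}(u)}} \le \kappa\,\max_j \exp\!\Bigl(\int_u^0 \mathfrak{Re}\,\lambda_j(\tfrac{s}{N}+t)\,ds\Bigr) \le \kappa\, e^{\mu u},
\]
using $\mathfrak{Re}\,\lambda_j\le-\mu$ and $\int_u^0 ds=-u\ge0$ for $u\le0$. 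As $u\mapsto\kappa e^{\mu u}\in L^2((-\infty,0])$, condition $(C3)$ holds and this case is finished.

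For case $(D2)$ the matrices need not be diagonalizable, so I would invoke the Jordan--Chevalley decomposition $A(r)=S(r)+\mathcal N(r)$ into semisimple and nilpotent parts. Since $S(r),\mathcal N(r)$ are polynomials in $A(r)$ and the family commutes, the collections $\{S(r)\}$ and $\{\mathcal N(r)\}$ again commute mutually and with one another; hence $M_{N,t}(u)=S_M+N_M$ with $S_M:=\int_u^0 S$, $N_M:=\int_u^0 \mathcal N$, where $S_M$ is semisimple, $N_M$ nilpotent and $[S_M,N_M]=0$, so that $e^{M_{N,t}(u)}=e^{S_M}e^{N_M}$. The semisimple factor is bounded by $\kappa\,e^{\mu u}$ exactly as in $(D1)$ (the $S(r)$ are simultaneously diagonalizable and their eigenvalues, equal to those of $A(r)$, have real part $\le-\mu$), while $e^{N_M}=\sum_{k=0}^{p-1}N_M^k/k!$ yields $\norm{e^{N_M}}\le\sum_{k=0}^{p-1}\norm{N_M}^k/k!$. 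Condition $(D2)$ is used precisely to force $\norm{M_{N,t}(u)}$, and therefore $\norm{S_M}$ (whose eigenvalues are dominated by $\norm{M_{N,t}(u)}$) and $\norm{N_M}\le\norm{M_{N,t}(u)}+\norm{S_M}$, to grow at most linearly in $|u|$, uniformly in $N$. Then $\norm{e^{N_M}}$ is bounded by a polynomial of degree $p-1$ in $|u|$, so $\norm{e^{M_{N,t}(u)}}\le\kappa\,\mathrm{poly}(|u|)\,e^{\mu u}=:F_t(u)\in L^2((-\infty,0])$, which again gives $(C3)$.

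The main obstacle is the $(D2)$ case: unlike $(D1)$, the nilpotent contribution must be controlled, and the only quantitative handle on it is the averaged bound in $(D2)$. Turning that average into an honest at-most-linear bound for $\norm{N_M}$ that is uniform in $N$ --- and reconciling the integral direction in $(D2)$ with the backward integral $\int_u^0$ defining $M_{N,t}(u)$ --- is the delicate point; everything else is the routine verification that $\mathrm{poly}(|u|)e^{\mu u}$ is square integrable on $(-\infty,0]$ together with an appeal to Proposition \ref{prop_C1_C2_C3_StateSpace_PeanoBaker}.
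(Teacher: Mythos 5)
Your proposal is correct in its overall strategy and, for case $(D1)$, coincides with the paper's argument: both reduce the problem to verifying $(C3)$ of Proposition \ref{prop_C1_C2_C3_StateSpace_PeanoBaker}, use mutual commutativity to write $\Psi_{N,t}^0(0,u)=e^{\int_u^0 A(\frac{s}{N}+t)\,ds}$, and then bound this by $Ce^{\mu u}$ via simultaneous diagonalization. For case $(D2)$, however, you take a genuinely different route. The paper splits off the bounded piece over $[0,\tau]$, simultaneously reduces the commuting family to upper triangular form by a unitary transformation, checks that the averaged eigenvalues $\tfrac{1}{x}\int_\tau^x\lambda_i$ have real part $\leq-\mu$, and then invokes Lukes' Theorem 7.7.1 as a black box to obtain $\norm{e^{\int_\tau^{-u}A}}\leq Ce^{\mu u}$ directly. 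You instead use the Jordan--Chevalley decomposition $A(r)=S(r)+\mathcal{N}(r)$, factor $e^{M}=e^{S_M}e^{N_M}$, treat the semisimple part as in $(D1)$, and use $(D2)$ only to force linear growth of $\norm{N_M}$ in $|u|$, giving a bound of the form $\kappa\,\mathrm{poly}(|u|)e^{\mu u}$ --- weaker than the paper's pure exponential but still in $L^2((-\infty,0])$, hence sufficient. Your version is more self-contained and makes transparent exactly where $(D2)$ enters (controlling the nilpotent contribution), at the price of two points you should make explicit: first, the maps $r\mapsto S(r)$, $r\mapsto\mathcal{N}(r)$ are not continuous in general (eigenvalue multiplicities can jump), so you need to argue that they are measurable and locally bounded (local boundedness follows from $\norm{S(r)}\leq\kappa\,\rho(A(r))$ and $\mathcal{N}=A-S$) for the integrals $S_M$, $N_M$ to be well defined; second, the sign reconciliation between $(D2)$, stated for $\int_\tau^x A(\tfrac{s}{N}+t)\,ds$, and the backward integral $\int_u^0$ that actually occurs --- a point you correctly flag as delicate, and which the paper itself handles only by an implicit substitution. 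Neither point undermines the argument.
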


\begin{proof}
It is sufficient to check $(C3)$ from Proposition \ref{prop_C1_C2_C3_StateSpace_PeanoBaker}. We start by assuming that $(D1)$ holds. Then, due to \cite[Theorem 1.3.12]{HJ1990}, $\{A(t)\}_{t\in\R}$ is simultaneously diagonalizable. Thus, there exists a non-singular matrix $S$ such that $S^{-1}A(\frac{s}{N}+t)S=diag(\lambda_1(\frac{s}{N}+t),\ldots,\lambda_p(\frac{s}{N}+t))=:D(\frac{s}{N}+t)$. Considering the spectral norm, we obtain for all $u\leq0$
\begin{align*}
\norm{ \Psi_{N,t}^0(0,u)}&=\norm{e^{\int_u^{0} A\left(\tfrac{s}{N}+t\right) ds}}=\norm{e^{\int_u^{0}S D\left(\tfrac{s}{N}+t\right) dsS^{-1}}}\leq\norm{S}\norm{S^{-1}} \norm{e^{\int_u^{0}D\left(\tfrac{s}{N}+t\right) ds}}\\
&\leq C \max \left\{ \sqrt{\mu}: \mu\in\sigma\left( e^{\int_u^0 D(\frac{s}{N}+t)^* + D(\frac{s}{N}+t) ds} \right) \right\} = C \max_{j=1,...,p} \sqrt{ e^{2\int_u^0 \mathfrak{Re}(\lambda_j(\frac{s}{N}+t)) ds} }\\
&= C \max_{j=1,...,p} e^{\int_u^0 \mathfrak{Re}(\lambda_j(\frac{s}{N}+t)) ds} \leq C e^{\int_u^0 -\mu ds} = C e^{\mu u}
\end{align*}
for some constant $C>0$.\\
In the case where $(D2)$ holds, we have
\begin{align*}
\norm{ \Psi_{N,t}^0(0,u)}&=\norm{e^{\int_u^{0} A\left(\frac{s}{N}+t\right) ds}}=\norm{e^{\int_0^{-u} A\left(\frac{-s}{N}+t\right) ds}}\\
&\leq \mathbb{1}_{\left\{-u\in[0,\tau]\right\}}e^{\abs{\tau}\sup_{s\in[0,\tau]}\norm{A\left(\frac{-s}{N}+t\right)}}+ \mathbb{1}_{\{-u> \tau\}}e^{\abs{\tau}\sup_{s\in[0,\tau]}\norm{A\left(\frac{-s}{N}+t\right)}} \norm{e^{\int_\tau^{-u}A\left(\frac{-s}{N}+t\right)ds}},
\end{align*}
where we used that the integrals $\int_0^{\tau}A\left(\frac{-s}{N}+t\right)ds$ and $\int_\tau^{-u}A\left(\frac{-s}{N}+t\right)ds$ commute. Therefore, it is sufficient to bound $\norm{e^{\int_\tau^{-u}A\left(\frac{-s}{N}+t\right)ds}}$. In the following we use \cite[Theorem 7.7.1]{L1982}. Since the family $\{A(t)\}_{t\in\R}$ is mutually commutative, the family can be reduced simultaneously to an upper triangular form by a single unitary transformation, i.e. there exists a unitary matrix $U\in M_{p\times p}(\C)$ such that $U^*A(t)U=T(t)$ is an upper triangular matrix for all $t\in\R$ (see \cite[Theorem 2.3.3]{HJ1990}). 
For each $x>\tau$, the diagonal entries (and hence also the eigenvalues) of
$\tfrac{1}{x}\int_\tau^xT\left( \tfrac{s}{N}+t \right)ds$ are $\tfrac{1}{x}\int_\tau^x \lambda_1\left(\tfrac{s}{N}+t\right)ds,\ldots,\tfrac{1}{x}\int_\tau^x \lambda_p\left(\tfrac{s}{N}+t\right)ds$. These are also the eigenvalues of $\tfrac{1}{x}\int_\tau^x A\left( \tfrac{s}{N}+t \right)ds$ since
\begin{align*}
\tfrac{1}{x}\int_\tau^xT\left( \tfrac{s}{N}+t \right)ds=\tfrac{1}{x}U^*\int_\tau^x A\left( \tfrac{s}{N}+t \right)ds\ U.
\end{align*}
For the real part of the eigenvalues we obtain
\begin{align*}
\mathfrak{Re}\left(\tfrac{1}{x}\int_\tau^x \lambda_i\left(\tfrac{s}{N}+t\right)ds\right)=\tfrac{1}{x}\int_\tau^x \mathfrak{Re}\left(\lambda_i\left(\tfrac{s}{N}+t\right)\right)ds\leq-\mu \left(\frac{x-\tau}{x}\right)\leq-\mu
\end{align*}
for all $i=1,\ldots,p$, $N\in\N$ and $x\in\R$. Hence
\begin{align*}
\overline{\bigcup_{\tau\leq t<\infty}\tilde\sigma\left(\frac{1}{x}\int_\tau^x A\left( \tfrac{s}{N}+t \right)ds\right)}\subset\overline{\bigcup_{\tau\leq t<\infty}\sigma\left(\frac{1}{x}\int_\tau^x A\left( \tfrac{s}{N}+t \right)ds\right)}\subset \left\{z\in\C:\mathfrak{Re}(z)\leq-\mu\right\}
\end{align*}
for all $N\in\N$, where $\tilde\sigma(B)$ denotes the collection of all distinct eigenvalues of the matrix $B$ and $\sigma(B)$ the spectrum $B$. Finally, an application of \cite[Theorem 7.7.1]{L1982} gives
\begin{align*}
\norm{e^{\int_\tau^{-u}A\left(\frac{-s}{N}+t\right)ds}}\leq Ce^{\mu u}
\end{align*}
for some constant $C>0$.
\end{proof}

Sequences of time-varying CARMA processes, i.e. where the family $\{\mathcal{A}(t)\}_{t\in\R}$ forms a family of companion matrix, cannot be covered by Proposition \ref{proposition:tvLDSParelocstatcommutative}, since companion matrices are in general not commutative. The following Proposition brings further insight when a family of companion matrices is mutually commutative.

\begin{Proposition}\label{Proposition:commutativitycompanion}
Let $\{\mathcal{A}(t)\}_{t\in\R}$ be a family of companion matrices and $\tau\in\R$ fixed. For any $t\in\R$ the matrix $\mathcal{A}(t)$ commutes with $\mathcal{A}(\tau)$ if and only if it is a polynomial of $\mathcal{A}(\tau)$ over $\C$.
\end{Proposition}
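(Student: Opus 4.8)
The plan is to exploit the fact that a companion matrix is \emph{nonderogatory}, so that its commutant coincides with the set of polynomials in it. The ``if'' direction then needs no work: every power $\mathcal{A}(\tau)^k$ commutes with $\mathcal{A}(\tau)$, hence so does any polynomial $q(\mathcal{A}(\tau))$ over $\C$. The entire content therefore lies in the ``only if'' direction, for which I would proceed in two steps.

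First I would establish that $\mathcal{A}(\tau)$ admits a cyclic vector. Writing $e_1,\ldots,e_p$ for the standard basis of $\C^p$ and exploiting that $\mathcal{A}(\tau)$ acts as a shift on the first $p-1$ coordinates (the ones carrying the superdiagonal of ones), a direct computation gives $\mathcal{A}(\tau)e_j = e_{j-1} - a_{p-j+1}(\tau)\,e_p$ for $1<j\leq p$ and $\mathcal{A}(\tau)e_1=-a_p(\tau)\,e_p$. By induction this yields that $\mathcal{A}(\tau)^k e_p$ equals $e_{p-k}$ plus a linear combination of $e_{p-k+1},\ldots,e_p$, so the family $\{e_p,\mathcal{A}(\tau)e_p,\ldots,\mathcal{A}(\tau)^{p-1}e_p\}$ is triangular with respect to the basis and hence itself a basis of $\C^p$. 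Equivalently, the minimal and characteristic polynomials of $\mathcal{A}(\tau)$ coincide, i.e. $\mathcal{A}(\tau)$ is nonderogatory.

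Assuming now $[\mathcal{A}(t),\mathcal{A}(\tau)]=0$, I would expand the single vector $\mathcal{A}(t)e_p$ in this cyclic basis,
\begin{align*}
\mathcal{A}(t)\,e_p = \sum_{k=0}^{p-1} c_k\,\mathcal{A}(\tau)^k e_p = q(\mathcal{A}(\tau))\,e_p, \qquad q(z):=\sum_{k=0}^{p-1} c_k z^k,
\end{align*}
with $c_0,\ldots,c_{p-1}\in\C$. Using commutativity, for every $k=0,\ldots,p-1$,
\begin{align*}
\mathcal{A}(t)\,\mathcal{A}(\tau)^k e_p = \mathcal{A}(\tau)^k\,\mathcal{A}(t)\,e_p = \mathcal{A}(\tau)^k\,q(\mathcal{A}(\tau))\,e_p = q(\mathcal{A}(\tau))\,\mathcal{A}(\tau)^k e_p.
\end{align*}
Thus $\mathcal{A}(t)$ and $q(\mathcal{A}(\tau))$ agree on the basis $\{\mathcal{A}(\tau)^k e_p\}_{k=0}^{p-1}$, whence $\mathcal{A}(t)=q(\mathcal{A}(\tau))$, as claimed. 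Both steps may alternatively be subsumed under the classical result that the centralizer of a nonderogatory matrix $M$ equals $\C[M]$ (see \cite{HJ1990}).

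The only point requiring genuine care is the verification of the cyclic-vector (nonderogatory) property for the specific orientation of the companion matrix used in \eqref{eq_statespace_tilde}; once that is in hand, the commutation argument is entirely routine and uses nothing about $\mathcal{A}(t)$ beyond the commutativity hypothesis. I therefore expect the shift computation and its inductive bookkeeping to be the main (though mild) obstacle, rather than the algebraic conclusion.
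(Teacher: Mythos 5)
Your proof is correct and follows exactly the route the paper takes: the paper also dismisses the ``if'' direction as trivial and, for the ``only if'' direction, simply cites \cite[Exercise 3.3P17]{HJ1990}, which is the statement that the commutant of a nonderogatory matrix consists of polynomials in it. You have merely written out in full (via the cyclic vector $e_p$ and the unitriangular basis $\{\mathcal{A}(\tau)^k e_p\}_{k=0}^{p-1}$) the argument that the paper delegates to that reference, and your computations for the companion form in \eqref{eq_statespace_tilde} are accurate.
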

\begin{proof}
It is clear that any polynomial of $\mathcal{A}(\tau)$ commutes with $\mathcal{A}(\tau)$. For the other direction we refer to \cite[Exercise 3.3P17]{HJ1990}.
\end{proof}

If, for a sequence of time-varying CARMA processes, the family $\{\mathcal{A}(t)\}_{t\in\R}$ is not mutually commutative, Proposition \ref{prop_C1_C2_C3_StateSpace_PeanoBaker} provides sufficient conditions for local stationarity, where condition $(C3)$ can be derived from Proposition \ref{prop_C3_StateSpace_PeanoBaker} and Corollary \ref{corollary:exponentialstableC3}.
%
%
%
%

\section{Time-varying spectrum}\label{sec5}
For a stationary processes $\{Y(t),t\in\R\}$ the autocovariance function $\gamma_Y(h):=Cov(Y(t+h),Y(t))$ is related to the spectral density $f_Y(\lambda)$ by
\begin{align*}	
\gamma_Y(h)=\int_{-\infty}^\infty e^{ih\lambda} f_Y(\lambda) d\lambda \qquad \text{and} \qquad f_Y(\lambda)	=\frac{1}{2\pi} \int_{-\infty}^\infty e^{-ih\lambda} \gamma_Y(h) dh.
\end{align*}
To describe the time-varying spectrum of a discrete-time locally stationary time series, \cite{D1996} used the Wigner-Ville spectrum (see also \cite{BT2004, FM1984, FM1985}). A comparable approach was presented in \cite[Section 11.2]{P1994}, where the author used the evolutionary spectrum. However, in contrast to this approach, the Wigner-Ville spectrum has the important consequence of a unique spectral representation as discussed in \cite[p. 74]{BT2004} and \cite[p. 143]{D1996}. In view of this property, we follow the approach of \cite{D1996} and define the Wigner-Ville spectrum and time-varying spectral density for a continuous-time locally stationary process as follows.

\begin{Definition}
Let $\{Y_N(t), t\in\R\}_{N\in\N}$ be a sequence of locally stationary processes. For $N\in\N$ we define the Wigner-Ville spectrum as
\begin{align*}
f_N(t,\lambda)=\frac{1}{2\pi} \int_{-\infty}^\infty e^{-i\lambda s} Cov\left(Y_N(t+\tfrac{s}{2N}),Y_N(t-\tfrac{s}{2N})\right) ds,
\end{align*}
and the (time-varying) spectral density of the process $Y_N(t)$ as
\begin{align*}
f(t,\lambda)= \frac{\Sigma_L}{2\pi} |A(t,\lambda)|^2,
\end{align*}
where $A(t,\lambda)$ denotes the limiting transfer function from Definition \ref{def_locstat_cont2}.
\end{Definition}

The following theorem is a continuous-time analogue to \cite[Theorem 2.2]{D1996}.

\begin{Theorem} \label{thm_WignerVille}
Let  $\{Y_N(t), t\in\R\}_{N\in\N}$ be a sequence of locally stationary processes in the form (\ref{equation:locallystationaryprocessfrequencydomain}). If
\begin{enumerate}[label={(\alph*)}]
\item $\norm{A_N^0(N(t\pm\tfrac{s}{2N}),\cdot)-A(t,\cdot)}_{L^2}\underset{N\rightarrow\infty}{\longrightarrow}0$ for all $s,t\in\R$,
\item $A_N^0$ as well as $A$ are uniformly bounded in $L^2$, i.e $\norm{A_N^0(Nt,\cdot)}_{L^2}\leq K$, $\norm{A(t,\cdot)}_{L^2}\leq K$ for all $t\in\R$, $N\in\N$ and a constant $K>0$ and
\item $A_N^0(Nt,\cdot)$ as well as $A(t,\cdot)$ are differentiable for all $t\in\R$, $N\in\N$ and the derivatives $\tfrac{d}{d\mu}A_N^0(Nt,\mu)$, $\tfrac{d}{d\mu}A(t,\mu)$ are uniformly bounded in $L^2$, i.e. $\norm{\tfrac{d}{d\mu}A_N^0(Nt,\cdot)}_{L^2}\leq K$ and $\norm{\tfrac{d}{d\mu}A(t,\cdot)}_{L^2}\leq K$, for all $t,N$ and a constant $K>0$,
\end{enumerate}
then the Wigner-Ville spectrum tends pointwise for each $t\in\R$ in mean square to the time-varying spectral density, i.e.
\begin{align*}
\int_\R \abs{f_N(t,\lambda)-f(t,\lambda)}^2 d\lambda \overset{N\rightarrow\infty}{\longrightarrow} 0.
\end{align*}
\end{Theorem}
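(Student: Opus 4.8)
The plan is to rewrite both $f_N(t,\cdot)$ and $f(t,\cdot)$ as Fourier transforms in the lag variable $s$ and then apply Plancherel's theorem to recast the desired $L^2(d\lambda)$-convergence as an $L^2(ds)$-statement about covariances. First I would compute the covariance explicitly: inserting the spectral representation \eqref{equation:locallystationaryprocessfrequencydomain} for $Y_N(t\pm\tfrac{s}{2N})$, using that $Y_N$ is real-valued, and invoking the isometry $E[(\int f\,\Phi_L)\overline{(\int h\,\Phi_L)}]=\tfrac{\Sigma_L}{2\pi}\int f\bar h\,d\mu$ implied by Theorem \ref{thm_Levy_ORM}, the two time-shift phases combine to $e^{i\mu s}$, giving
\begin{align*}
c_N(t,s):=Cov\!\left(Y_N(t+\tfrac{s}{2N}),Y_N(t-\tfrac{s}{2N})\right)=\frac{\Sigma_L}{2\pi}\int_\R e^{i\mu s}A_N^0(N(t+\tfrac{s}{2N}),\mu)\overline{A_N^0(N(t-\tfrac{s}{2N}),\mu)}\,d\mu.
\end{align*}
Thus $f_N(t,\cdot)$ is (up to a fixed constant) the Fourier transform in $s$ of $c_N(t,\cdot)$, and a Fourier-inversion computation shows that $f(t,\cdot)$ is the transform of $c(t,s):=\tfrac{\Sigma_L}{2\pi}\int_\R e^{i\mu s}\abs{A(t,\mu)}^2\,d\mu$; here $\abs{A(t,\cdot)}^2\in L^2(\R)$ because (b) and (c) place $A(t,\cdot)$ in $H^1(\R)$, hence in $L^\infty(\R)$ by Sobolev embedding. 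By Plancherel's theorem the assertion is therefore equivalent to $\int_\R\abs{c_N(t,s)-c(t,s)}^2\,ds\to0$.

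Next I would telescope the integrand and split the lag integral. Writing
\begin{align*}
c_N(t,s)-c(t,s)=\underbrace{\tfrac{\Sigma_L}{2\pi}\int_\R e^{i\mu s}\big(A_N^0(N(t+\tfrac{s}{2N}),\mu)-A(t,\mu)\big)\overline{A_N^0(N(t-\tfrac{s}{2N}),\mu)}\,d\mu}_{=:I_N(t,s)}+II_N(t,s),
\end{align*}
where $II_N$ is the symmetric term with $A(t,\mu)$ multiplying $\overline{A_N^0(N(t-\tfrac{s}{2N}),\mu)-A(t,\mu)}$, it suffices to treat $I_N$ (and $II_N$ identically). For $\abs{s}\le R$ I would use Cauchy--Schwarz in $\mu$ together with (a) and (b):
\begin{align*}
\abs{I_N(t,s)}\le\tfrac{\Sigma_L K}{2\pi}\,\norm{A_N^0(N(t+\tfrac{s}{2N}),\cdot)-A(t,\cdot)}_{L^2},
\end{align*}
which tends to $0$ for each fixed $s$ by (a) and is bounded by the constant $\tfrac{\Sigma_L K\cdot 2K}{2\pi}$; dominated convergence on $[-R,R]$ then forces $\int_{\abs{s}\le R}\abs{I_N(t,s)}^2\,ds\to0$.

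The main obstacle is the tail $\abs{s}>R$, since the crude Cauchy--Schwarz bound does not decay in $s$ and is not $L^2(ds)$-integrable over $\R$; this is exactly where the smoothness hypothesis (c) is essential. For fixed $t,s$ the inner $\mu$-integral in $I_N$ is $\sqrt{2\pi}$ times the Fourier transform, evaluated at $-s$, of the function $\varphi_{N,s}(\mu)=\big(A_N^0(N(t+\tfrac{s}{2N}),\mu)-A(t,\mu)\big)\overline{A_N^0(N(t-\tfrac{s}{2N}),\mu)}$, which lies in $H^1(\R)$ with $\norm{\partial_\mu\varphi_{N,s}}_{L^1}\le 4K^2$ uniformly in $N$ and $s$ by the product rule and the $L^2$-bounds in (b),(c). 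Integrating by parts once in $\mu$ (boundary terms vanish since $\varphi_{N,s}\in H^1(\R)$) yields $\abs{I_N(t,s)}\le C/\abs{s}$ with $C$ independent of $N$, whence $\int_{\abs{s}>R}\abs{I_N(t,s)}^2\,ds\le 2C^2/R$, small uniformly in $N$ for large $R$. Combining the two regimes, given $\varepsilon>0$ I would first fix $R$ so that the tail contributions of $I_N$ and $II_N$ are below $\varepsilon/2$ for all $N$, and then choose $N$ large enough that the bulk contributions fall below $\varepsilon/2$; this gives $\int_\R\abs{c_N(t,s)-c(t,s)}^2\,ds\to0$ and, by the Plancherel reduction, the claim.
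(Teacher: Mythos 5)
Your proposal is correct and follows essentially the same route as the paper: a Plancherel reduction to the lag variable (your $c_N(t,s)-c(t,s)$ is, up to the constant $\Sigma_L/(2\pi)$, exactly the paper's $\widehat{a}_N(\tfrac{s}{2N})$), pointwise convergence on a bounded lag window via Cauchy--Schwarz and hypotheses (a)--(b), and a uniform $C/\abs{s}$ tail bound obtained by integrating by parts in $\mu$ using (c). The only cosmetic differences are that you telescope into two terms before integrating by parts and invoke the Sobolev embedding $H^1(\R)\hookrightarrow L^\infty(\R)$ where the paper uses its Lemma \ref{lem_1}; neither changes the substance of the argument.
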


\begin{Remark}
Since $A_N^0$ and $A$ are defined as Fourier transforms of $g_N^0$ and $g$ in $L^2$, they exist as elements in $L^2$, i.e. as representatives of equivalence classes. As usual, this does not allow for taking derivatives in the usual sense, but would lead to the concept of weak derivatives. \\
However, for a function $f\in L^1$ such that $uf(u)\in L^1$, the derivative of the Fourier transform $\widehat{f}(\mu)$ can be expressed as $\tfrac{d}{d\mu}\widehat{f}(\mu) = \widehat{(-iuf(u))}(\mu)$, due to \cite[Theorem 1.6, Chapter VI]{K2004}. An application of this theorem to the Fourier transform pairs $A_N^0$ and $g_N^0$ as well as $A$ and $g$, ensures the existence of the pointwise derivatives in (c). The conditions on the the kernel functions can be readily obtained for instance if the considered sequence of locally stationary state space models is uniformly exponentially stable, since then the kernel functions are of exponential decay (see Corollary \ref{cor_WignerVille_i_ii_iii_StateSpace}).
\end{Remark}

\begin{Lemma}\label{lem_1}
A continuous function $h:\R\rightarrow\R$ in $L^2(\R)$ lies also in $L^p(\R)$ for all $p>2$.
\end{Lemma}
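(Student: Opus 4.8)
The plan is to reduce the claim, via an elementary interpolation bound, to the boundedness of $h$, and then to extract that boundedness from continuity together with the $L^2$-decay of $h$ at infinity. The interpolation step is immediate: once one knows that $M:=\sup_{x\in\R}\abs{h(x)}<\infty$, then for any $p>2$ one writes $\abs{h}^p=\abs{h}^{p-2}\abs{h}^2\le M^{p-2}\abs{h}^2$ pointwise, so that $\int_\R \abs{h}^p\,d\lambda\le M^{p-2}\Lnorm{h}^2<\infty$, which is exactly $h\in L^p(\R)$. Thus everything hinges on the sup-bound, and this is the step I expect to be the main obstacle.

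To obtain the sup-bound I would work on the unit intervals $I_n=[n,n+1]$, $n\in\Z$. On each $I_n$ continuity guarantees that $\abs{h}$ attains its maximum $M_n=\abs{h(x_n)}$ at some $x_n\in I_n$, and the goal is to control $M_n$ by $\int_{I_n}\abs{h}^2\,d\lambda$, a quantity that is summable over $n$ and hence tends to $0$. If $\abs{h}$ stays close to $M_n$ on a portion of $I_n$ of fixed length — which is precisely what a modulus of continuity supplies — then $\int_{I_n}\abs{h}^2$ is bounded below by a constant multiple of $M_n^2$ (up to a harmless additive correction), giving $M_n\le c_1+c_2\big(\int_{I_n}\abs{h}^2\big)^{1/2}$. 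Since the right-hand side is uniformly bounded and the integrals vanish, $\sup_n M_n<\infty$, that is $M<\infty$, and the interpolation bound above then finishes the proof.

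The delicate point, and the reason the sup-bound is the crux, is that bare pointwise continuity does not by itself force $\abs{h}$ to remain large on an interval of fixed length near $x_n$: a sequence of tall, narrow spikes could keep each $\int_{I_n}\abs{h}^2$ small while $M_n\to\infty$. One therefore needs the continuity to be quantitative as $\abs{n}\to\infty$, i.e. a uniform modulus of continuity (equivalently, $h(x)\to0$ as $\abs{x}\to\infty$), which is exactly what licenses the lower bound on $\int_{I_n}\abs{h}^2$ above. In the applications relevant to this paper this regularity is automatic: the kernels in question decay exponentially under uniform exponential stability (cf. Corollary \ref{corollary:exponentialstableC3}), so $h$ is continuous, bounded, and vanishing at infinity, whence the sup-bound holds and the interpolation estimate applies directly.
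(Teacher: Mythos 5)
Your argument is essentially correct and, on one important point, more careful than the paper's own proof. The paper argues by setting $J=\{x:\abs{h(x)}\geq 1\}$, asserting that $\lim_{x\to\pm\infty}h(x)=0$ so that $J$ is compact, bounding $h$ on $J$ by a constant $C$ via continuity, and concluding from
\begin{align*}
\int_\R \abs{h(x)}^p\,dx \;\leq\; \int_J C^p\,dx \;+\; \int_{\R\setminus J}\abs{h(x)}^2\,dx \;<\;\infty .
\end{align*}
This is the same reduction you perform (dominate $\abs{h}^p$ by $\abs{h}^2$ wherever $\abs{h}$ is small, and use boundedness on the remaining set), merely organized by splitting the domain rather than by first extracting a global sup-bound; both variants work once decay at infinity is available. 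The point you correctly isolate, and which the paper assumes without justification, is that $\lim_{x\to\pm\infty}h(x)=0$ does \emph{not} follow from continuity together with $h\in L^2(\R)$: a sequence of triangular spikes of height $n$ and base width $n^{-4}$ centred at the integers yields a continuous $L^2$ function that is unbounded and fails to lie in $L^p$ for every $p\geq 3$, so the lemma as literally stated is false. Your diagnosis that an additional hypothesis is required (a uniform modulus of continuity, or equivalently vanishing at infinity), and that it is satisfied by the functions to which the lemma is actually applied --- the transfer functions $A(t,\cdot)$ have square-integrable derivatives and hence vanish at infinity, and under uniform exponential stability the kernels decay exponentially --- is precisely the repair the paper's statement and proof need. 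The only stylistic remark is that your interval-by-interval estimate of the local maxima $M_n$ is more elaborate than necessary: once $h(x)\to 0$ as $\abs{x}\to\infty$ is granted, global boundedness follows immediately from continuity on a sufficiently large compact set, after which your inequality $\int_\R\abs{h}^p\leq M^{p-2}\int_\R\abs{h}^2$ finishes the argument.
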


\begin{proof}
Define the set $J=\{x, \abs{f(x)}\geq1 \}$. Since $\lim_{x\rightarrow\pm\infty}f(x)=0$, $J$ is a compact set. Thus, there exists a constant $C$ such that $\abs{f(x)}\leq C$ for all $x\in J$. Finally,
\begin{align*}
\int_\R \abs{f(x)}^p dx \leq \int_J C^p dx + \int_{\R\backslash J} \abs{f(x)}^2 dx < \infty.
\end{align*}
\end{proof}

\noindent\textit{Proof of Theorem \ref{thm_WignerVille}.}
In the following, $K$ denotes the constant from the conditions (b) and (c). First, we note that the covariance of $Y_N(t)$ is given by
\begin{align*}	
Cov\left(Y_N(t_1),Y_N(t_2)\right) = \frac{\Sigma_L}{2\pi} \int_\R e^{i\mu (t_1-t_2) N} A_N^0(Nt_1,\mu) \overline{A_N^0(Nt_2,\mu)} d\mu.
\end{align*}
Then, we obtain for the Wigner-Ville spectrum and the (time-varying) spectral density
\begin{align*}
f_N(t,\lambda)&= \frac{1}{2\pi} \int_{\R} e^{-i\lambda s} \frac{\Sigma_L}{2\pi} \int_{\R} e^{i\mu \left((t+\tfrac{s}{2N})-(t-\tfrac{s}{2N})\right) N} A_N^0(N(t+\tfrac{s}{2N}),\mu) \overline{A_N^0(N(t-\tfrac{s}{2N}),\mu)} d\mu\ ds \\
&= \frac{\Sigma_L}{(2\pi)^2} \int_{\R} e^{-i\lambda s} \int_{\R} e^{i\mu s} A_N^0(N(t+\tfrac{s}{2N}),\mu) \overline{A_N^0(N(t-\tfrac{s}{2N}),\mu)} d\mu\ ds\text{ and}\\
f(t,\lambda)&= \frac{\Sigma_L}{2\pi} |A(t,\lambda)|^2
= \frac{\Sigma_L }{(2\pi)^2} \int_{\R} e^{-i\lambda s} \int_{\R} e^{i\mu s} |A(t,\lambda)|^2 d\mu\ ds.
\end{align*}
We note that, due to the differentiability condition (c), the function $A(t,\cdot)$ is continuous and in $L^2$. Then, an application of Lemma \ref{lem_1} gives $A(t,\cdot)\in L^4$, which implies $f(t,\cdot)\in L^2$. Moreover, for all $t\in\R$, $N\in\N$ we obtain from Plancherel's theorem, the Cauchy–Schwarz inequality and the integration by parts formula for some $C>0$
\begin{align*}
&\int_{\R} \abs{f_N(t,\lambda)}^2 d\lambda= \frac{\Sigma_L}{4\pi^2} \int_{\R} \abs{ \int_{\R} e^{-i\lambda s} \int_{\R} e^{i\mu s}  A_N^0(N(t+\tfrac{s}{2N}),\mu) \overline{A_N^0(N(t-\tfrac{s}{2N}),\mu)} d\mu\ ds}^2 d\lambda \\
&=C \int_{\R} \abs{ \int_{\R} e^{i\mu s}  A_N^0(N(t+\tfrac{s}{2N}),\mu) \overline{A_N^0(N(t-\tfrac{s}{2N}),\mu)} d\mu }^2 ds \\
&\leq C \int_{|s|< 1} \|A_N^0(N(t+\tfrac{s}{2N}),\cdot)\|_{L^2}^2 \|A_N^0(N(t-\tfrac{s}{2N}),\cdot)\|_{L^2}^2 ds\\
&\quad+ C \int_{|s|\geq 1} \abs{ \int_{\R} e^{i\mu s} A_N^0(N(t+\tfrac{s}{2N}),\mu) \overline{A_N^0(N(t-\tfrac{s}{2N}),\mu)} d\mu  }^2 ds \\
&\leq 2C K^4+ C \int_{|s|< 1} \Bigg|0-\int_{\R} \frac{e^{i\mu s}}{is} \bigg(A_N^0(N(t+\tfrac{s}{2N}),\mu)  \frac{d}{d\mu}\overline{A_N^0(N(t+\tfrac{s}{2N}),\mu)} \\
&\qquad\qquad\qquad\qquad\qquad\qquad+\frac{d}{d\mu} A_N^0(N(t+\tfrac{s}{2N}),\mu)  \overline{A_N^0(N(t+\tfrac{s}{2N}),\mu)}  \bigg)d\mu\Bigg|^2 ds\\
&\leq 2C K^4
+ 4CK^4 \int_{|s|\geq 1} \frac{1}{s^2}ds<\infty,
\end{align*}
where $\lim_{\mu\rightarrow\pm\infty}A_N^0(t,\mu)=0$, since $A_N^0(t,\cdot)$ is continuous and in $L^2(\R)$ for all $t\in\R$.
Finally, the integral $\int_\R |f_N(t,\lambda)-f(t,\lambda)|^2 d\lambda$ is well defined, since $f_N(t,\cdot)$ and $f(t,\cdot)$ are both in $L^2(\R)$ for all $t\in\R$.
From Plancherel's theorem we obtain
\begin{align*}
&(2\pi)^2 \int_\R \abs{f_N(t,\lambda)-f(t,\lambda)}^2 d\lambda \\
&= \frac{\Sigma_L^2}{2\pi} 
\int_{\R} \abs{ \frac{1}{\sqrt{2\pi}}\int_{\R} e^{-i\lambda s} \left( \int_{\R} e^{i\mu s} \left( A_N^0(N(t+\tfrac{s}{2N}),\mu) \overline{A_N^0(N(t-\tfrac{s}{2N}),\mu)} - A(t,\mu) \overline{A(t,\mu)} \right) d\mu \right) ds }^2 d\lambda \\
&= \frac{\Sigma_L^2}{2\pi} \int_{\R} 
		\abs{ \int_{\R} e^{i\mu s} \left( A_N^0(N(t+\tfrac{s}{2N}),\mu) \overline{A_N^0(N(t-\tfrac{s}{2N}),\mu)} - A(t,\mu) \overline{A(t,\mu)} \right) d\mu }^2 ds\\
		& =: \frac{\Sigma_L^2}{2\pi} \int_{\R} \abs{ \widehat{a}_N(\tfrac{s}{2N})}^2 ds,
\end{align*}
where $\widehat{a}_N(\tfrac{s}{2N}) 	= \int_{\R} e^{i\mu s} a_N(\tfrac{s}{2N},\mu) d\mu$. It is left to show 
\begin{align}\label{equation:ahatconverges0}
\int_{\R} \abs{ \widehat{a}_N(\tfrac{s}{2N})}^2 ds\underset{N\rightarrow\infty}{\longrightarrow}0.
\end{align} 
The proof of (\ref{equation:ahatconverges0}) consists of several steps. We start by showing $\widehat{a}_N(\tfrac{s}{2N}) \underset{N\rightarrow\infty}{\longrightarrow}0$. Indeed, for fixed $s,t\in\R$ we obtain
\begin{align*}
\abs{\widehat{a}_N(\tfrac{s}{2N})} &\leq \int_{\R} \abs{A_N^0(N(t+\tfrac{s}{2N}),\mu) \overline{A_N^0(N(t-\tfrac{s}{2N}),\mu)} - A(t,\mu) \overline{A(t,\mu)}} d\mu \\
&\leq \int_{\R} \abs{A_N^0(N(t+\tfrac{s}{2N}),\mu) - A(t,\mu)}\abs{\overline{A_N^0(N(t-\tfrac{s}{2N}),\mu)}} \\
&\qquad+  \abs{A(t,\mu)}\abs{\overline{A_N^0(N(t-\tfrac{s}{2N}),\mu)} - \overline{A(t,\mu)}} d\mu \\
&\leq \norm{ A_N^0(N(t+\tfrac{s}{2N}),\cdot) - A(t,\cdot)}_{L^2} \norm{ A_N^0(N(t-\tfrac{s}{2N}),\cdot)}_{L^2} \\
&\qquad+ \norm{ A(t,\cdot)}_{L^2}\norm{ A_N^0(N(t-\tfrac{s}{2N}),\cdot) - A(t,\cdot)}_{L^2}.
\end{align*}
Now, due to condition (a), it holds $\norm{ A_N^0(N(t-\tfrac{s}{2N}),\cdot) - A(t,\cdot)}_{L^2}\rightarrow0$ as $N\rightarrow\infty$. Moreover, using the conditions (b) and (c), there exists a constant $D$, which may depend on $s$ and $t$, such that 
\begin{align*}
\norm{A(t,\cdot)}_{L^2} &\leq D \text{ and}\\
 \norm{A_N^0(N(t\pm\tfrac{s}{2N}),\cdot)}_{L^2} &\leq \norm{A_N^0(N(t\pm\tfrac{s}{2N}),\cdot)-A(t,\cdot)}_{L^2} + \norm{A(t,\cdot)}_{L^2} \leq D
\end{align*}
for sufficiently large $N$. Thus
\begin{align}
\begin{aligned}\label{equation:ahatconvergespointwiseto0}
\abs{\widehat{a}_N(\tfrac{s}{2N})}&\leq\norm{A_N^0(N(t+\tfrac{s}{2N}),\cdot)-A(t,\cdot)}_{L^2}^2 \norm{A_N^0(N(t-\tfrac{s}{2N}),\cdot)}_{L^2}^2\\
&\quad + \norm{ A(t,\cdot)}_{L^2}^2 \norm{A_N^0(N(t-\tfrac{s}{2N}),\cdot)-A(t,\cdot)}_{L^2}^2\rightarrow 0,
\end{aligned}
\end{align}
as $N\rightarrow\infty$.\\
Next, we show $|\widehat{a}_N(\tfrac{s}{2N})|\leq\frac{E}{|s|}$, for all $s\in\R$, sufficiently large $N\in\N$ and some constant $E>0$, which may depend on $t$. On the one hand, we have
\begin{align}
\begin{aligned}\label{equation:ahatbounded}
\int_{\R} \abs{\tfrac{d}{d\mu} a_N(\tfrac{s}{2N},\mu)} d\mu&= \int_{\R} \bigg|\left(\tfrac{d}{d\mu} A_N^0(N(t+\tfrac{s}{2N}),\mu)\right) \overline{A_N^0(N(t-\tfrac{s}{2N}),\mu)} \\
&\qquad+ A_N^0(N(t+\tfrac{s}{2N}),\mu) \left(\tfrac{d}{d\mu} \overline{A_N^0(N(t-\tfrac{s}{2N}),\mu)}\right) \\
&\qquad- \left(\tfrac{d}{d\mu}A(t,\mu)\right) \overline{A(t,\mu)} - A(t,\mu) \left(\tfrac{d}{d\mu} \overline{A(t,\mu)}\right)\bigg| d\mu \\
&\leq \norm{ \tfrac{d}{d\mu} A_N^0(N(t+\tfrac{s}{2N}),\cdot)}_{L^2} 
\norm{ A_N^0(N(t-\tfrac{s}{2N}),\cdot) }_{L^2} \\
&\qquad+ \norm{A_N^0(N(t+\tfrac{s}{2N}),\cdot) }_{L^2}\norm{\tfrac{d}{d\mu} A_N^0(N(t-\tfrac{s}{2N}),\cdot)}_{L^2} \\
&\qquad+ 2 \norm{ A(t,\cdot)}_{L^2}\norm{\tfrac{d}{d\mu} A(t,\cdot)}_{L^2}\leq E,
\end{aligned}
\end{align}
where the last inequality follows from (b) and (c). On the other hand, $\tfrac{d}{d\mu} a_N(\tfrac{s}{2N},\mu)\in L^1(\R)$, since $A_N^0(t,\cdot),A(t,\cdot),\tfrac{d}{d\mu}A_N^0(t,\cdot),\tfrac{d}{d\mu}A(t,\cdot)\in L^2(\R)$, such that
\begin{align}
\begin{aligned}\label{equation:integrationbypartsahat}
\int_{\R} e^{i\mu s} \left( \tfrac{d}{d\mu} a_N(\tfrac{s}{2N},\mu) \right) d\mu
&=\left[ e^{i\mu s} a_N(\tfrac{s}{2N},\mu) \right]\bigg|_{\mu=-\infty}^\infty - \int_{\R} (is) e^{i\mu s} a_N(\tfrac{s}{2N},\mu) d\mu \\
&= (-is) \widehat{a}_N(\tfrac{s}{2N}),
\end{aligned}
\end{align}
where the limit in the first term of the partial integration is zero, because $a_N(\tfrac{s}{2N},\mu)$ is continuous and in $L^1(\R)$. Combining (\ref{equation:ahatbounded}) and (\ref{equation:integrationbypartsahat}) we obtain
\begin{align*}
\abs{\widehat{a}_N(\tfrac{s}{2N})}\leq \frac{1}{\abs{s}} \int_{\R} \abs{ \tfrac{d}{d\mu} a_N(\tfrac{s}{2N},\mu)} d\mu\leq \frac{E}{\abs{s}}.
\end{align*}
Finally, for $s^*\in\R$ 
\begin{align*}
(2\pi)^2 \int_\R \abs{f_N(t,\lambda)-f(t,\lambda)}^2 d\lambda &= \frac{\Sigma_L^2}{2\pi} \int_{s\in\R} \abs{ \widehat{a}_N(\tfrac{s}{2N})}^2 ds \\
&= \frac{\Sigma_L^2}{2\pi} \int_{\abs{s}\geq s^\ast} \abs{ \widehat{a}_N(\tfrac{s}{2N})}^2 ds+\frac{\Sigma_L^2}{2\pi} \int_{\abs{s}<s^\ast} \abs{ \widehat{a}_N(\tfrac{s}{2N})}^2 ds\\
&\leq \frac{\Sigma_L^2E^2}{\pi s^*}+\frac{\Sigma_L^2}{2\pi} \int_{\abs{s}<s^\ast} \abs{ \widehat{a}_N(\tfrac{s}{2N})}^2 ds.
\end{align*}
The second term converges to zero by the dominated convergence theorem, where pointwise convergence follows from (\ref{equation:ahatconvergespointwiseto0}) and a convergent majorant can be obtained from the boundedness conditions in (b) and the Cauchy-Schwarz inequality, noting that the support of the integral is compact. Therefore, for all $\varepsilon>0$ and sufficiently large $s^*$ and $N$ it holds
\begin{align*}
 \int_\R \abs{f_N(t,\lambda)-f(t,\lambda)}^2 d\lambda &\leq \frac{\Sigma_L^2E^2}{\pi s^*}+\frac{\Sigma_L^2}{2\pi} \int_{\abs{s}<s^\ast} \abs{ \widehat{a}_N(\tfrac{s}{2N})}^2 ds \leq \varepsilon,
\end{align*}
which concludes the proof.\qed

\begin{Corollary} \label{cor_WignerVille_i_ii_iii_StateSpace}
Let $Y_N(t)$ be a sequence of time-varying linear state space processes as in Definition \ref{def_sequencetimevarying_StateSpace_PBS}, such that both IVPs in (\ref{equ_IVP_PBS}) are uniformly exponentially stable, the conditions (C1)-(C3) from Proposition \ref{prop_C1_C2_C3_StateSpace_PeanoBaker} hold and $\sup_{t\in\R}\norm{B(t)}<\infty$. Then, the sequence of Wigner-Ville spectra tends in mean square to the time-varying spectral density.
\end{Corollary}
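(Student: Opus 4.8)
The plan is to verify that the three hypotheses (a), (b) and (c) of Theorem \ref{thm_WignerVille} hold under the assumptions of the corollary; the assertion then follows at once from that theorem. The one tool driving all three verifications is a \emph{uniform exponential decay} of the two kernels. Indeed, uniform exponential stability of the first IVP in (\ref{equ_IVP_PBS}) together with Corollary \ref{corollary:exponentialstableC3} gives $\norm{\Psi_{N,t}^0(0,u)}\leq\gamma e^{\lambda u}$ for $u\leq0$, while $\Psi_t(0,-v)=e^{A(t)v}$ inherits the bound $\norm{\Psi_t(0,-v)}\leq\gamma e^{-\lambda v}$ from stability of the second IVP. Writing the kernels of Definition \ref{def_sequencetimevarying_StateSpace_PBS} as functions of their second argument $v\geq0$, this yields
\begin{align*}
\abs{g_N^0(Nt,v)}\leq \mathbb{1}_{\{v\geq0\}}\,\gamma\,M_B\,M_C\,e^{-\lambda v},\qquad M_B:=\sup_{s\in\R}\norm{B(s)}<\infty,\quad M_C:=\sup_{s\in\R}\norm{C(s)}<\infty,
\end{align*}
and the identical bound for $g(t,v)$, uniformly in $t\in\R$ and $N\in\N$. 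Here the finiteness of $M_B$ is exactly the extra hypothesis of the corollary (beyond Proposition \ref{prop_C1_C2_C3_StateSpace_PeanoBaker}) and $M_C<\infty$ comes from $(C2)$. Consequently $g_N^0(Nt,\cdot),\,g(t,\cdot)$ as well as $v\mapsto v\,g_N^0(Nt,v),\,v\,g(t,v)$ all lie in $L^1(\R)\cap L^2(\R)$ with norms bounded uniformly in $t$ and $N$.

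Conditions (b) and (c) are then obtained by direct estimation. For (b), integrating the square of the exponential majorant gives a bound on $\Lnorm{g_N^0(Nt,\cdot)}^2$ and $\Lnorm{g(t,\cdot)}^2$ that is independent of $t$ and $N$; since $A_N^0(Nt,\cdot)$ and $A(t,\cdot)$ are the Fourier transforms of $g_N^0(Nt,\cdot)$ and $g(t,\cdot)$, Plancherel's theorem transfers this into the required uniform $L^2$-bound on the transfer functions. For (c), I would first apply the formula $\tfrac{d}{d\mu}\widehat{f}(\mu)=\widehat{(-iuf(u))}(\mu)$ from the Remark following Theorem \ref{thm_WignerVille}, which is legitimate because both $g_N^0(Nt,\cdot)$ and $v\mapsto v\,g_N^0(Nt,v)$ (and likewise for $g$) are in $L^1(\R)$ by the decay above; this simultaneously establishes pointwise differentiability of $A_N^0(Nt,\cdot)$ and $A(t,\cdot)$ and identifies $\tfrac{d}{d\mu}A_N^0(Nt,\cdot)$ as the Fourier transform of $-iv\,g_N^0(Nt,v)$. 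A second use of Plancherel then controls $\Lnorm{\tfrac{d}{d\mu}A_N^0(Nt,\cdot)}^2$ by a multiple of $\Lnorm{v\,g_N^0(Nt,v)}^2\leq \gamma^2M_B^2M_C^2\int_0^\infty v^2e^{-2\lambda v}\,dv$, uniformly in $t$ and $N$, and identically for $A(t,\cdot)$.

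The delicate point, and the step I expect to be the main obstacle, is condition (a), because the first argument $N(t\pm\tfrac{s}{2N})=Nt\pm\tfrac{s}{2}$ forces a comparison of $g_N^0$ at the \emph{moving} rescaled time $\tilde t_N:=t\pm\tfrac{s}{2N}\to t$ against the limiting kernel at the \emph{fixed} time $t$. I would prove $\Lnorm{g_N^0(N\tilde t_N,\cdot)-g(t,\cdot)}\to0$ directly and then pass to $A_N^0$ via Plancherel. The argument of Proposition \ref{prop_C1_C2_C3_StateSpace_PeanoBaker} carries over: for fixed $v$ the matrix $\Psi_{N,\tilde t_N}^0(0,-v)$ solves the IVP in (\ref{equ_IVP_PBS}) driven by $A(\tfrac{s'}{N}+\tilde t_N)$, and since $\tfrac{s'}{N}+\tilde t_N\to t$ uniformly for $s'$ in the compact interval $[-v,0]$ while $A$ is uniformly continuous there, the comparison theorem \cite[§12.V]{W2000} gives $\Psi_{N,\tilde t_N}^0(0,-v)\to e^{A(t)v}=\Psi_t(0,-v)$; combined with $B(\tilde t_N)\to B(t)$ and $C(\tfrac{-v}{N}+\tilde t_N)\to C(t)$ from $(C1)$ this yields $g_N^0(N\tilde t_N,v)\to g(t,v)$ pointwise. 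The exponential majorant above, being uniform in $N$ and in all $\tilde t_N$ lying in a bounded neighbourhood of $t$, then upgrades this to $L^2$-convergence via the dominated convergence theorem, establishing (a). With (a), (b) and (c) verified, Theorem \ref{thm_WignerVille} gives that the Wigner--Ville spectra converge in mean square to the time-varying spectral density, which is the claim.
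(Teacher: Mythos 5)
Your proposal is correct and follows essentially the same route as the paper: verify conditions (a)--(c) of Theorem \ref{thm_WignerVille} by translating everything to the kernels via Plancherel, using uniform exponential stability to obtain the exponential majorant $\gamma M_B M_C e^{-\lambda v}$, pointwise convergence from $(C1)$ and the continuous dependence of the IVP solution on its data, and dominated convergence for (a), with the Katznelson differentiation formula handling (c). The extra care you take with the moving time $t\pm\tfrac{s}{2N}$ in condition (a) is exactly the point the paper's proof also addresses, so there is nothing to add.
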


\begin{proof}
It is sufficient to check the conditions (a), (b) and (c) from Theorem \ref{thm_WignerVille}.
\begin{enumerate}[label={(\alph*)}]
\item For $s,t\in\R$ we obtain from Plancherel's theorem
\begin{align*}
&\norm{A_N^0(N(t\pm\tfrac{s}{2N}),\cdot)-A(t,\cdot)}_{L^2}^2 = 4\pi^2 \norm{g_N^0(N(t\pm\tfrac{s}{2N}),\cdot)-g(t,\cdot)}_{L^2}^2 \\
		&= 4\pi^2 \int_\R \mathbb{1}_{\{u\leq 0\}} \Big| B(t\pm\tfrac{s}{2N})' \Psi_{N,t\pm\tfrac{s}{2N}}^0(0,u) C(\tfrac{u}{N}+t\pm\tfrac{s}{2N})- B(t)' \Psi_{t}(0,u) C(t) \Big|^2 du,
\end{align*}
which tends to zero as $N\rightarrow\infty$ by the dominated convergence theorem. Pointwise convergence is secured by the continuity of $A,B$ and $C$ in (C1) and the continuity of the solution of an IVP on the input (see the proof of Proposition \ref{prop_C1_C2_C3_StateSpace_PeanoBaker}). Since the sequence $Y_N(t)$ is uniformly exponentially stable, we have $\norm{ \Psi_{N,t}^0(s_1,s_0)} \leq \gamma e^{-\lambda(s_1-s_0)}$ for some $\gamma, \lambda >0$ and all $s_1>s_0$. Therefore, a convergent majorant can be obtained by noting that
\begin{align*}
\mathbb{1}_{\{u\leq 0\}} \abs{ B(t\pm\tfrac{s}{2N})' \Psi_{t\pm\tfrac{s}{2N}}(0,u) C(\tfrac{u}{N}+t\pm\tfrac{s}{2N})}\leq \mathbb{1}_{\{u\leq 0\}} \left(\sup_{t\in\R}\|B(t)\|\right) \gamma e^{\lambda u} \left(\sup_{t\in\R}\norm{C(t)}\right).
\end{align*}
\item For $t\in\R$ and $N\in\N$ it holds $\norm{ \Psi_{N,t}^0(s_1,s_0)} \leq \gamma e^{-\lambda(s_1-s_0)}$ and $\norm{ \Psi_t(s_1,s_0)} \leq \gamma e^{-\lambda(s_1-s_0)}$ for some $\gamma, \lambda >0$ and all $s_1>s_0$. Thus
\begin{align*}
\norm{A_N^0(Nt,\cdot)}_{L^2}^2 &= 4\pi^2 \norm{g_N^0(Nt,\cdot)}_{L^2}^2 = 4\pi^2 \int_\R \mathbb{1}_{\{u\leq 0\}} \abs{ B(t)' \Psi_{N,t}^0(0,u) C(\tfrac{u}{N}+t)}^2 du \\
&\leq
\frac{2\pi^2\gamma^2}{\lambda} \left(\sup_{t\in\R}\|B(t)\|\right)^2 \left(\sup_{t\in\R}\|C(t)\|\right)^2 <\infty \text{ and}\\
\norm{A(t,\cdot)}_{L^2}^2&= 4\pi^2 \norm{g(t,\cdot)}_{L^2}^2 = 4\pi^2 \int_\R \mathbb{1}_{\{u\leq 0\}} \norm{ B(t)' \Psi_t(0,u) C(u)}^2 du \\
&\leq 
\frac{2\pi^2\gamma^2}{\lambda} \left(\sup_{t\in\R}\norm{B(t)}\right)^2\left(\sup_{t\in\R}\norm{C(t)}\right)^2 <\infty.
\end{align*}
\item Since $Y_N(t)$ is uniformly exponentially stable, \cite[Theorem 1.6]{K2004} implies that 
\begin{align*}
\tfrac{d}{d\mu} A_N^0(Nt,\mu) &= \int_\R e^{-i\mu u} (-iu) g_N^0(t,u) du \text{ and} \\
\tfrac{d}{d\mu} A(t,\mu) &= \int_\R e^{-i\mu u} (-iu) g(t,u) du,
\end{align*}
which are again in $L^2(\R)$, since
\begin{align*}
\norm{ \tfrac{d}{d\mu} A_N^0(Nt,\cdot)}_{L^2}^2&= 4\pi^2 \norm{(-i\cdot) g_N^0(Nt,\cdot)}_{L^2}^2 \\
&= 4\pi^2 \int_\R \mathbb{1}_{\{u\leq 0\}} \abs{ (-iu) B(t)' \Psi_{N,t}^0(0,u) C(\tfrac{u}{N}+t) }^2 du \\
&\leq 4\pi^2 \Big(\sup_{t\in\R}\norm{B(t)}\Big)^2  \left(\gamma^2 \int_{-\infty}^0 u^2 e^{2\lambda u} du\right) \Big(\sup_{t\in\R}\norm{C(t)}\Big)^2 \\
&= \frac{\gamma^2\pi^2}{\lambda^3} \Big(\sup_{t\in\R}\norm{B(t)}\Big)^2\Big(\sup_{t\in\R}\norm{C(t)}\Big)^2<\infty\text{ and analogously} \\
\norm{\tfrac{d}{d\mu} A(t,\cdot)}_{L^2}^2&\leq \frac{\gamma^2\pi^2}{\lambda^3} \Big(\sup_{t\in\R}\norm{B(t)}\Big)^2\Big(\sup_{t\in\R}\norm{C(t)}\Big)^2<\infty.
\end{align*}
\end{enumerate}
\end{proof}

\section*{Acknowledgements}
The third author was supported by the scholarship program of the Hanns-Seidel Foundation, funded by the Federal Ministry of Education and Research.

\bibliographystyle{acm}

\begin{thebibliography}{10}
%

\bibitem{A2009}
\textsc{Applebaum, D} (2009). \textit{L\'evy processes and stochastic calculus, 2nd ed.}
Cambridge University Press, Cambridge.

\bibitem{BS2011b}
\textsc{Baake, M. and Schlägel, U.} (2011).
The Peano-Baker series. \textit{Proc. Steklov Inst. Math.}
\textbf{275} 155--159.

\bibitem{BDW2020}
\textsc{Bardet, J.~M., Doukhan, P. and Wintenberger, O.} (2020).
Contrast estimation of general locally stationary processes using coupling. \textit{arXiv:2005.07397.}

\bibitem{BKMV2014}
\textsc{Benth, F.~E., Klüppelberg, C., Müller, G. and Vos, L.} (2014).
Futures pricing in electricity markets based on stable {CARMA} spot models. \textit{Energy Econ.}
\textbf{44} 392--406.

\bibitem{B1987}
\textsc{Benmahammed, K.} (1987).
Model Reduction of Uniformily Controllable Continuous Time Varying Linear Systems. \textit{American Control Conference}
1500--1503.

\bibitem{B2009}
\textsc{Bernstein, D.~S.} (2009). \textit{Matrix mathematics: Theory, facts, and formulas, 2nd ed.}
Princeton University Press.

\bibitem{B1970}
\textsc{Brockett, R.~W.} (1970). \textit{Finite dimensional linear systems.}
John Wiley \& Sons Inc.

\bibitem{BD1996}
\textsc{Brockwell, P.~J. and Davis, R.~A.} (1996). \textit{Time series: theory and methods.}
Springer, New York.

\bibitem{BT2004}
\textsc{Bruscato, A. and Toloi, C.~M.~C.} (2004).
Spectral analysis of non-stationary processes using the Fourier transform. \textit{Braz. J. Probab.}
\textbf{18} 69--102.

\bibitem{C1989}
\textsc{Chandrasekharan, K.} (1989). \textit{Classical Fourier Transforms.}
Springer, Berlin.

\bibitem{D1996}
\textsc{Dahlhaus, R.} (1996).
On the Kullback-Leibler information divergence of locally stationary processes. \textit{Stoch. Proc. Appl.}
\textbf{62} 139--168.

\bibitem{D1997}
\textsc{Dahlhaus, R.} (1997).
Fitting time series models to nonstationary processes. \textit{Ann. Stat.}
\textbf{25} 1--37.

\bibitem{D2000}
\textsc{Dahlhaus, R.} (2000).
A likelihood approximation for locally stationary processes. \textit{Ann. Stat.}
\textbf{28} 1762--1794.

\bibitem{D2012}
\textsc{Dahlhaus, R.} (2012). 
Locally stationary processes. In \textit{Handbook of statistics: Time series analysis: Methods and applications} 
\textbf{30} 351--413.

\bibitem{DP2009}
\textsc{Dahlhaus, R. and Polonik, W.} (2009).
Empirical spectral processes for locally stationary time series. \textit{Bernoulli}
\textbf{15} 1--39.

\bibitem{DRW2019}
\textsc{Dahlhaus, R., Richter, S. and Wu, W.~B.} (2019).
Towards a general theory for nonlinear locally stationary processes. \textit{Bernoulli}
\textbf{25} 1013--1044.

\bibitem{DSR2006}
\textsc{Dahlhaus, R. and Subba Rao, T.} (2006).
Statistical inference for time-varying {ARCH} processes. \textit{Ann. Stat.}
\textbf{34} 1075--1114.

\bibitem{FM1984}
\textsc{Flandrin, P. and Martin, W.} (1984). 
A general class of estimators for the Wigner-Ville spectrum of non-stationary processes. In \textit{Analysis and Optimization of Systems. Lecture Notes in Control and Information Sciences.} 15--23. Springer, Berlin.

\bibitem{HJ1990}
\textsc{Horn, R.~A. and Johnson C.~R.} (1990). \textit{Matrix Analysis.}
Cambridge University Press, Cambridge.

\bibitem{K2004}
\textsc{Katznelson, Y.} (2004). \textit{An introduction to harmonic analysis, 3rd ed.}
Cambridge Mathematical Library, Cambridge.

\bibitem{K2002}
\textsc{Krylov, N.~V.} (2002). \textit{Introduction to the Theory of Random Processes.}
American Mathematical Soc., Providence.

\bibitem{K1995}
\textsc{K\"unsch, H.~R.} (1995).
A note on causal solutions for locally stationary AR-processes. 
Preprint, ftp://ess.r-project.org/users/hkuensch/localstat-ar.pdf.

\bibitem{LM2004}
\textsc{Larsson, E.~K. and Mossberg, M.} (2004). 
Fast and approximative estimation of continuous-time stochastic signals from discrete-time data. In \textit{IEEE International Conference on Acoustics, Speech, and Signal Processing, Proceedings, ICASSP 04, 2} 529--532.

\bibitem{L1982}
\textsc{Lukes, D.~N.} (1982). \textit{Differential Equations: Classical to Controlled.}
Academic Press, London.

\bibitem{MS2007}
\textsc{Marquardt, T. and Stelzer, R.} (2007).
Multivariate {CARMA} processes. \textit{Stoch. Proc. Appl.}
\textbf{117} 96--120.

\bibitem{FM1985}
\textsc{Martin, W. and Flandrin, P.} (1985).
Wigner-Ville spectral analysis of nonstationary processes. \textit{ IEEE Trans. Acoust. Speech Signal Process.}
\textbf{33} 1461--1470.

\bibitem{P1994}
\textsc{Priestley, M. B.} (1994). \textit{Spectral Analysis and Time Series. Volumes I and II in 1 book.}
Academic Press, London.

\bibitem{RR1971}
\textsc{Ramar, K. and Ramaswami, B.} (1971).
Transformation of time-variable multi-input systems to a canonical form. \textit{IIEEE Trans. Autom. Control}
\textbf{16} 371--374.

\bibitem{RR1969}
\textsc{Ramaswami, B. and Ramar, K.} (1969).
On the transformation of time-variable systems to the phase-variable canonical form. \textit{IIEEE Trans. Autom. Control}
\textbf{14} 417--419.

\bibitem{R1988}
\textsc{Royden, H.~L.} (1988). \textit{Real Analysis, 3rd ed.}
Macmillan, New York.

\bibitem{R1991}
\textsc{Rudin, W.} (1991). \textit{Functional Analysis, 2nd ed.}
{McGraw}-Hill Science/Engineering/Math, New York.

\bibitem{R1996}
\textsc{Rugh, W.~J.} (1996). \textit{Linear System Theory, 2nd ed.}
Prentice-Hall, New Jersey.

\bibitem{S2013}
\textsc{Sato, K.~I.} (2013). \textit{L\'evy processes and infinitely divisible distributions, Cambridge studies in advanced mathematics 68.}
Cambridge University Press, Cambridge.

\bibitem{S2014}
\textsc{Sato, K.-I.} (2014).
Stochastic integrals with respect to L\'evy processes and infinitely divisible distributions. \textit{Sugaku Expositions}
\textbf{27}19--42.

\bibitem{SS2012}
\textsc{Schlemm, E. and Stelzer, R.} (2012).
Multivariate {CARMA} processes, continuous-time state space models and complete regularity of the innovations of the sampled processes. \textit{Bernoulli}
\textbf{18} 46--63.

\bibitem{S1966}
\textsc{Silverman, L.} (1966).
Transformation of time-variable systems to canonical (phase-variable) form. \textit{IEEE Trans. Autom. Control}
\textbf{11} 300--303.

\bibitem{S2010}
\textsc{Surulescu, N.~M.} (2010). \textit{On some classes of continuous-time series models and their use in financial economics.}
PhD thesis, Ruprecht-Karls-Universität Heidelberg.

\bibitem{V2012}
\textsc{Vogt, M.} (2012).
Nonparametric regression for locally stationary time series. \textit{Ann. Stat.}
\textbf{40} 2601--2633.

\bibitem{VD2015}
\textsc{Vogt, M. and Dette, H.} (2015).
Detecting gradual changes in locally stationary processes. \textit{Ann. Stat.}
\textbf{43} 713--740.

\bibitem{W2000}
\textsc{Walter, W.} (2000). \textit{Gew{\"o}hnliche Differentialgleichungen, 7th ed.}
Springer, Berlin.

\bibitem{WS1976}
\textsc{Wu, M.-A. and Sherif, A.} (1976).
On the commutative class of linear time-varying systems. \textit{Int. J. Control.}
\textbf{23} 433--444.


\end{thebibliography}

\end{document}